\def\R{\mathbb{R}}
\def\d{\partial}
\def\ba{\begin{align}}
\def\ea{\end{align}}
\def\bp{\begin{proof}}
	\def\ep{\end{proof}}
\newtheorem{theorem}{Theorem}[section]
\newtheorem{proposition}[theorem]{Proposition}
\newtheorem{lemma}[theorem]{Lemma}
\newtheorem{definition}[theorem]{Definition}
\newtheorem{corollary}[theorem]{Corollary}
\newtheorem{conjecture}[theorem]{Conjecture}
\newtheorem{example}[theorem]{Example}
\newtheorem{remark}[theorem]{Remark}
\def\R{\mathbb{R}}
\def\d{\partial}
\def\bp{\begin{proof}}
\def\ep{\end{proof}}
\def\R{{\cal R}}
\def\Dc{{\mathcal D}}
\def\Lc{{\mathcal L}}
\def\R{\mathbb{R}}
\def\d{\partial}
\begin{document}

\title[The mass of an asymptotically hyperbolic manifold]{The mass of an asymptotically hyperbolic manifold with a non-compact boundary}

\author{S\'{e}rgio Almaraz}
\address{Universidade Federal Fluminense (UFF), Instituto de Matem\'{a}tica, Campus do Gragoat\'a\\
Rua Prof. Marcos Waldemar de Freitas, s/n, bloco H, 24210-201, Niter\'{o}i, RJ, Brazil.}
              \email{almaraz@vm.uff.br}
\author{Levi Lopes de Lima}
\address{Universidade Federal do Cear\'a (UFC),
Departamento de Matem\'{a}tica, Campus do Pici, Av. Humberto Monte, s/n, Bloco 914, 60455-760,
Fortaleza, CE, Brazil.}
\email{levi@mat.ufc.br}
\thanks{S. Almaraz has been partially suported by  CNPq/Brazil grant 
	309007/2016-0 and CAPES/Brazil grant 88881.169802/2018-01, and L. de Lima has been partially supported by CNPq/Brazil grant
	311258/2014-0. Both authors have been partially suported by FUNCAP/CNPq/PRONEX grant 00068.01.00/15.}

\begin{abstract}
We define a mass-type invariant for asymptotically hyperbolic manifolds with a non-compact boundary which are modelled at infinity on the hyperbolic half-space and prove a sharp positive mass inequality in the spin case under suitable dominant energy conditions. As an application we show that any such manifold which is Einstein and either has a totally geodesic boundary or is conformally compact and has a mean convex boundary is isometric to the hyperbolic half-space. 
\end{abstract}

\maketitle

\section{Introduction}

Given a non-compact Riemannian manifold $(M,g)$ of dimension $n\geq 3$ arising as the (time-symmetric) initial data set for a solution $(\overline M,\overline g)$ of Einstein field equations in dimension $n+1$, standard physical reasoning suggests the existence of a geometric invariant defined in terms of the asymptotic behavior of the underlying metric at spatial infinity. Roughly speaking, it is assumed that in the asymptotic region $(M, g)$ converges to some reference space $(N,b)$, which by its turn is required to propagate to a {\em static} solution (i.e a solution displaying a time-like vector field whose orthogonal distribution is integrable), and the mass invariant, which is denoted by $\mathfrak m_{(g,b)}$ and should be interpreted as the total energy of the isolated gravitational system modelled by $(\overline M,\overline g)$, is designed so as to
capture the  coefficient of the leading term in the asymptotic expansion of $g$ around $b$. In particular, the important
question arises as to whether, under a suitable dominant energy condition,
the invariant in question satisfies the positive mass inequality
\begin{equation}\label{massineq}
\mathfrak m_{(g,b)}\geq  0, 
\end{equation}
with equality taking place if and only if $(M, g) = (N, b)$ isometrically.

The classical example is the asymptotically flat case, where the reference space is 
$(\mathbb R^n,\delta)$, the  Euclidean space endowed with the standard flat metric $\delta$. Here, $\mathfrak m_{(g,\delta)}$ is the so-called ADM mass and it has been conjectured that the corresponding positive mass inequality holds true whenever the scalar curvature $R_g$ of $g$ is non-negative. After previous contributions by Schoen-Yau \cite{SY1} if $n \leq 7$ and by Witten and Bartnik \cite{Wi, Bar} in the spin case, the conjecture has at last been settled in independent contributions by Schoen-Yau \cite{SY2} and Lohkamp \cite{Lo}.

Partly motivated by the
so-called
AdS/CFT correspondence, which in the Euclidean semi-classical limit highlights
Einstein metrics with negative scalar curvature,  
recently there has been much interest in studying similar
invariants for non-compact Riemannian manifolds whose geometry
at infinity approaches some reference space with constant negative sectional curvature \cite{An,He1}. A
notable example occurs in case the model geometry is hyperbolic space $(\mathbb H^n,b)$. The novelty here is that the asymptotic invariant is {\em not} a number but instead
a linear functional on 
the space $\mathcal N_b$ of static potentials $V:\mathbb H^n\to\mathbb R$ satisfying 
\begin{equation}\label{stateq}
\nabla^2_bV=Vb;
\end{equation}
see \cite{CH,CN,Mi,He1}.
However, symmetry considerations allow us to extract a mass-like invariant
(i.e. a real number) out of the given functional, so it makes sense to ask whether the inequality
similar to (\ref{massineq}) holds, with the corresponding rigidity statement characterizing the reference space. 
After preliminary contributions by Min-Oo \cite{M-O}, Anderson-Dahl \cite{AD} and Wang \cite{Wa}, the conjecture has been confirmed in case the underlying manifold is spin by Chru\'sciel-Herzlich
\cite{CH}. Elementary proofs of this result in special cases are available in \cite{DGS,dLG} (graphical manifolds) and \cite{BCN} (small perturbations of the standard hyperbolic metric). We  also refer to \cite{Ma} for a treatment of the non-time-symmetric case. Regarding the not necessarily spin case, we should mention the 
work by Andersson-Cai-Galloway \cite{ACG}.

At least in the asymptotically flat case, the positive mass inequality (\ref{massineq}) has applications that transcend its physical motivation. In particular, it has been crucially used in Schoen's solution of the Yamabe problem \cite{Sc,LP} and in the  investigation of multiplicity and compactness issues for solutions to this same problem \cite{BM, dLPZ}.    
The need to examine these questions for compact manifolds with boundary suggested the consideration of a mass-type invariant for asymptotically flat manifold with a {\em non-compact} boundary $\Sigma$ modelled on the Euclidean half-space $\mathbb R^n_+$. In \cite{ABdL} a positive mass inequality has been established for this invariant under the assumptions that the scalar curvature $R_g$ and the mean curvature $H_g$ along the boundary are both nonnegative and that the double of the underlying manifold satisfies the standard (i.e. boundaryless) mass inequality. In view of the recent progress due to Schoen-Yau and Lohkamp mentioned above, the positive mass theorem in \cite{ABdL} actually  holds in full generality.  

The purpose of this article is to devise a mass-type invariant which   
at the same time extends
those considered in \cite{ABdL} and \cite{CH}. More precisely, here we take as reference space  the hyperbolic half-space $(\mathbb H^n_+,b)$, which is obtained by cutting the standard hyperbolic space $(\mathbb H^n,b)$ along a totally geodesic hypersurface $\partial \mathbb H^n_+$. We make use of Witten's spinorial approach  to establish, for asymptotically hyperbolic spin manifolds $(M,g)$ with a non-compact boundary $\Sigma$ and which are  modeled at infinity on $(\mathbb H^n_+,b,\partial\mathbb H^n_+)$, a sharp positive mass inequality   under suitable lower bounds on the scalar curvature $R_g$ and the mean curvature $H_g$ of $\Sigma$;  see Definition \ref{def:as:hyp} and Theorems \ref{maintheo} and \ref{conjmptheo} below. The following rigidity statements are then  consequences of our main results. 

\begin{theorem}\label{maintheocor}
	Let $(M,g,\Sigma)$ be an asymptotically hyperbolic spin manifold with $R_g\geq -n(n-1)$ and $H_g\geq 0$. Assume further that $g$ agrees with the reference hyperbolic metric $b$ in a neighborhood of infinity. Then $(M,g,\Sigma)=(\mathbb H^n_+,b,\partial\mathbb H^n_+)$ isometrically. 
	\end{theorem}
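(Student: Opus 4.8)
The plan is to obtain Theorem~\ref{maintheocor} as the equality case of the sharp positive mass inequality of Theorems~\ref{maintheo} and~\ref{conjmptheo}, specialized to metrics that are genuinely hyperbolic near infinity. First I would note that the hypothesis $g\equiv b$ outside a compact set makes the whole mass functional vanish: in the asymptotic chart of Definition~\ref{def:as:hyp} each of its components is the limit of boundary integrals over large coordinate hemispheres together with the corresponding piece of $\partial\mathbb{H}^n_+$, with integrands polynomial in $g-b$ and its first derivatives, hence identically zero for large radius. So $(M,g,\Sigma)$ has zero mass and everything reduces to the rigidity statement.

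For the rigidity I would run Witten's argument and track when equality is forced. Let $\overline\nabla$, given by $\overline\nabla_X\psi=\nabla_X\psi-\tfrac12\,X\cdot\psi$, be the Killing connection whose parallel sections are the imaginary Killing spinors (in the convention used in the paper), and fix a basis $\phi_1,\dots,\phi_N$ of such spinors on the model $(\mathbb{H}^n_+,b)$ satisfying the chirality/MIT-type boundary condition along $\partial\mathbb{H}^n_+$ that enters the definition of the mass. Using $g\equiv b$ near infinity, solve on $(M,g,\Sigma)$ the boundary value problem for the hypersurface Dirac operator with the same boundary condition on $\Sigma$ and with $\psi_j$ asymptotic to $\phi_j$; the integrated Weitzenb\"ock--Lichnerowicz identity then gives
\begin{equation}
0 \;=\; \int_M\Big(|\overline\nabla\psi_j|^2+\tfrac14\big(R_g+n(n-1)\big)|\psi_j|^2\Big)\,dM+\tfrac12\int_\Sigma H_g\,|\psi_j|^2\,d\Sigma,
\end{equation}
the left-hand side being the value of the mass functional singled out by $\phi_j$. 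Since $R_g\geq -n(n-1)$ and $H_g\geq 0$, every term vanishes; in particular $\overline\nabla\psi_j\equiv 0$, so the $\psi_j$ are genuine imaginary Killing spinors on $(M,g)$ spanning a space of the same dimension as on the model.

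Carrying a space of imaginary Killing spinors of maximal dimension forces, via the standard computation of the curvature of $\overline\nabla$, that $(M,g)$ has constant sectional curvature $-1$; the vanishing of the boundary integral together with the boundary condition satisfied by the $\psi_j$ forces $\Sigma$ to be totally geodesic. Doubling $(M,g)$ across $\Sigma$ then produces a smooth complete connected manifold of constant curvature $-1$ agreeing near infinity with the double of $(\mathbb{H}^n_+,b)$, which a developing-map argument identifies with $\mathbb{H}^n$; undoing the doubling yields $(M,g,\Sigma)=(\mathbb{H}^n_+,b,\partial\mathbb{H}^n_+)$.

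I expect the real obstacles to be twofold. First, the boundary analysis behind Theorems~\ref{maintheo}--\ref{conjmptheo}: selecting a Dirac-type boundary condition that is elliptic and admits the existence-and-asymptotics step, is satisfied by the model imaginary Killing spinors on $\mathbb{H}^n_+$, and makes the boundary term in the Weitzenb\"ock identity carry exactly the sign controlled by $H_g\geq 0$. Second, the equality case itself: extracting from spinors that are only $\overline\nabla$-parallel and boundary-adapted both geometric conclusions (constant curvature and $\Sigma$ totally geodesic) and then the global identification with the half-space, which uses the connectedness of $M$ and the rigidity of complete constant-curvature manifolds with the prescribed hyperbolic end rather than merely a local statement.
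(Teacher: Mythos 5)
Your proposal is correct and follows essentially the paper's own route: the paper deduces Theorem \ref{maintheocor} in one line from Theorem \ref{conjmptheo}, whose proof is precisely the Witten-type rigidity argument you spell out (the mass vanishes since $e\equiv 0$ near infinity, the Killing-harmonic spinors with chirality boundary conditions become genuinely Killing by the mass formula (\ref{maintheo2}), the boundary condition forces $\Sigma$ totally geodesic, and doubling plus the standard identification gives $\mathbb H^n$). The only noteworthy differences are that in your special case the whole mass vector vanishes at the outset, so you can bypass the alternate Einstein--Newton mass formula (\ref{asymhypl2}) which the paper's general rigidity proof needs, and that you should be explicit about using \emph{both} chirality conditions $(\pm)$ so as to obtain the maximal number $2^{k}$ (not just $2^{k-1}$) of imaginary Killing sections, which is what actually forces constant curvature $-1$.
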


\begin{theorem}\label{riggen}
	Let $(M,g,\Sigma)$ is an asymptotically hyperbolic spin manifold. Assume further that $g$ is Einstein 
	and that $\Sigma$ is totally geodesic. Then $(M,g,\Sigma)=(\mathbb H^n_+,b,\partial\mathbb H^n_+)$ isometrically. 
\end{theorem}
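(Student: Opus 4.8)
The plan is to derive Theorem~\ref{riggen} from the sharp positive mass inequality (Theorem~\ref{conjmptheo}) together with its rigidity clause, exactly as Theorem~\ref{maintheocor} follows from Theorem~\ref{maintheo}. First I would observe that if $(M,g,\Sigma)$ is Einstein with a totally geodesic boundary, then the Einstein constant is forced: since $g$ is asymptotically hyperbolic, the asymptotic geometry pins down $\mathrm{Ric}_g = -(n-1)g$, hence $R_g = -n(n-1)$ identically, and the boundary being totally geodesic gives $H_g = 0$. Thus the hypotheses of the positive mass theorem are met with equality in the pointwise curvature bounds $R_g \geq -n(n-1)$ and $H_g \geq 0$.

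The second step is to show the mass functional vanishes. The idea is that a doubling or reflection argument across the totally geodesic boundary $\Sigma$ produces a boundaryless asymptotically hyperbolic Einstein manifold $(\widehat M,\widehat g)$ modelled on $\mathbb H^n$; because $\Sigma$ is totally geodesic the doubled metric is $C^{1,1}$ (indeed smooth, by Einstein elliptic regularity and a unique continuation/reflection principle for the Einstein equation across a totally geodesic hypersurface). One then invokes the Chru\'sciel--Herzlich rigidity in the boundaryless case (or directly the known fact that an Einstein asymptotically hyperbolic manifold has vanishing mass only in the model case) — alternatively, and more in the spirit of this paper, one argues that the spinorial Witten-type argument behind Theorem~\ref{conjmptheo} forces the mass-type invariant of $(M,g,\Sigma)$ to be $\leq 0$ whenever $g$ is Einstein, since the Einstein condition makes the Weitzenb\"ock error terms vanish and an imaginary Killing spinor on the model restricts to a parallel-type spinor realizing the boundary term; combined with $\mathfrak{m}_{(g,b)} \geq 0$ this yields $\mathfrak{m}_{(g,b)} = 0$.

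Finally, the rigidity clause of Theorem~\ref{conjmptheo} (the equality case of the sharp positive mass inequality) applies directly: $\mathfrak{m}_{(g,b)} = 0$ forces $(M,g,\Sigma) = (\mathbb H^n_+, b, \partial\mathbb H^n_+)$ isometrically, which is the assertion. The main obstacle I anticipate is the second step — namely establishing rigorously that an asymptotically hyperbolic Einstein metric with totally geodesic boundary has vanishing mass-type invariant without already assuming conformal compactness or a smallness condition. The clean route is the doubling argument, but it requires care: one must check that the reflected metric is regular enough across $\Sigma$ to be a legitimate competitor (the totally geodesic hypothesis is exactly what guarantees $C^1$-matching of the second fundamental forms, and Einstein regularity upgrades this), and that the doubled manifold's mass is twice the half-space invariant $\mathfrak{m}_{(g,b)}$ up to a normalizing constant, so that vanishing of one is equivalent to vanishing of the other. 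Once the doubled mass is seen to vanish by boundaryless rigidity, descending back through the reflection recovers the conclusion for $(M,g,\Sigma)$.
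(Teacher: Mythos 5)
Your overall skeleton (check the hypotheses of the positive mass theorem, show the mass vanishes, invoke the rigidity clause of Theorem \ref{conjmptheo}) is the same as the paper's, and your first and last steps are fine. The genuine gap is exactly where you anticipated it: you have no valid mechanism for showing that the mass vanishes. Your ``more in the spirit of the paper'' route is incorrect as stated: the Witten formula (\ref{maintheo2}) expresses $\mathfrak m_{(g,b,F)}(V_\Phi)$ as $\int_M\bigl(|\widetilde\nabla^{\mathcal E,+}\Psi_\Phi|^2+\tfrac{R_g+n(n-1)}{4}|\Psi_\Phi|^2\bigr)dM+\tfrac12\int_\Sigma H_g|\Psi_\Phi|^2d\Sigma$, and the Einstein/totally geodesic hypotheses only kill the zeroth-order terms; the gradient term $|\widetilde\nabla^{\mathcal E,+}\Psi_\Phi|^2$ survives, so the spinorial argument only ever gives the lower bound $\mathfrak m\geq 0$, never an upper bound $\mathfrak m\leq 0$. (Its vanishing is equivalent to the existence of Killing sections, which is essentially what you are trying to prove.) The ingredient the paper actually uses, and which your proposal never identifies, is the alternate asymptotic expression for the mass, Theorem \ref{asymhypl} (from \cite{dLGM}, following \cite{He2}): $\mathfrak m_{(g,b,F)}(V_{(a)})$ is a limit of flux integrals of the modified Einstein tensor $\widehat G_g$ over $\breve S^{n-1}_{r,+}$ plus the Newton tensor $J_g=\Pi_g-H_gg$ over $\breve S^{n-2}_r$. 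With ${\rm Ric}_g=-(n-1)g$ one has $\widehat G_g=0$ (Remark \ref{remeins}) and with $\Pi_g=0$ one has $J_g=0$, so the whole mass vector $\mathcal P^{[F]}$ vanishes identically and Theorem \ref{conjmptheo} finishes the proof in two lines.

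Your doubling route does not repair this gap, it merely relocates it. After doubling you need to know that a boundaryless asymptotically hyperbolic Einstein spin manifold has vanishing mass; Chru\'sciel--Herzlich rigidity cannot be ``invoked'' until that vanishing is established, and the Andersson--Dahl computation \cite{AD} requires conformal compactness, which is not assumed here (this is precisely why the paper emphasizes that Theorem \ref{riggennbd} extends \cite{AD}). The only known route to that vanishing in the present generality is again the Ricci-tensor mass formula of \cite{He2}, i.e.\ the boundaryless case of (\ref{asymhypl2}) -- at which point one may as well apply the half-space version directly and skip the doubling. In addition, the doubling itself is not free: you assert smoothness of the reflected metric via ``Einstein elliptic regularity and unique continuation'', and that the doubled chart satisfies the decay and integrability conditions of \cite{CH}; these are plausible but nontrivial claims that would need proofs, and they are entirely avoidable given Theorem \ref{asymhypl}.
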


The proof of Theorem \ref{riggen} also makes use of an alternate definition of the mass in terms of the Einstein tensor on the interior and the Newton tensor of the boundary as described in Theorem \ref{asymhypl} below. This has recently beed  established in \cite{dLGM}  by adapting an argument first put forward by Herzlich \cite{He2} in the boundaryless case. It should be pointed out that the boundaryless version of Theorem \ref{riggen} may be obtained by a similar argument. Here we use the boundaryless version of the alternate definition (\ref{asymhypl2}), which already appears in \cite{He2}, and the positive mass theorem in \cite{CH}. This actually provides a noteworthy extension of a celebrated rigidity result by Anderson and Dahl \cite{AD} originally proved in the more restrictive setting of conformal compactness; see also \cite[Theorem 4.5]{He1}. More precisely, the following result, which may be thought of as a corollary of our proof of Theorem \ref{riggen}, holds.

\begin{theorem}\label{riggennbd}
	Let $(M,g)$ be a complete, boundaryless  spin manifold which is asymptotically hyperbolic (in the sense of \cite{CH}) and assume further that $g$ is Einstein. Then $(M,g)=(\mathbb H^n,b)$ isometrically. 
	\end{theorem}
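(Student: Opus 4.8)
The plan is to show that the hypotheses force the Chru\'sciel--Herzlich mass functional $\mathfrak m_{(g,b)}$ of $(M,g)$ to vanish identically, and then to invoke the rigidity case of the positive mass theorem of \cite{CH}. This is the boundaryless shadow of the argument for Theorem \ref{riggen}, with the boundary contributions simply absent. First, since $g$ is Einstein its scalar curvature $R_g$ is a constant; and since $(M,g)$ is asymptotically hyperbolic in the sense of \cite{CH}, $g$ approaches the reference metric $b$ near infinity, so $R_g$ must equal $R_b=-n(n-1)$. Being Einstein with this scalar curvature, $g$ then satisfies $\mathrm{Ric}_g=-(n-1)g$ everywhere on $M$. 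In particular the modified Einstein tensor $\mathbb{E}_g:=\mathrm{Ric}_g-\frac{1}{2}R_g\,g-\frac{1}{2}(n-1)(n-2)\,g$ --- which measures the failure of $g$ to solve the vacuum Einstein equation with the same cosmological constant as the hyperbolic background, and which (as one checks by taking a trace) vanishes precisely on metrics with $\mathrm{Ric}_g=-(n-1)g$ --- is identically zero on $M$, and of course $R_g+n(n-1)\equiv 0$ as well.

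Next I would appeal to the boundaryless alternate formula for the mass recorded in equation (\ref{asymhypl2}) (the version, due to Herzlich \cite{He2}, of the formula stated with a boundary in Theorem \ref{asymhypl}): it expresses the value $\mathfrak m_{(g,b)}(V)$ of the mass functional at an arbitrary static potential $V\in\mathcal N_b$ as an absolutely convergent integral over $M$ whose integrand is pointwise linear in $\mathbb{E}_g$ and in $(R_g+n(n-1))V$. By the previous step this integrand vanishes identically, so $\mathfrak m_{(g,b)}\equiv 0$. The rigidity statement of the Chru\'sciel--Herzlich positive mass theorem \cite{CH} --- to the effect that a complete asymptotically hyperbolic spin manifold with $R_g\geq -n(n-1)$ and vanishing mass functional is isometric to $(\mathbb H^n,b)$ --- then completes the proof. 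As noted in the introduction, this recovers the Anderson--Dahl rigidity theorem \cite{AD} and dispenses with its conformal-compactness hypothesis.

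The step I expect to demand the most care is the passage to the interior formula (\ref{asymhypl2}) under the precise asymptotics of \cite{CH}: one must justify, via the divergence theorem, that the boundary integral at infinity defining $\mathfrak m_{(g,b)}(V)$ coincides with the claimed interior integral, which reduces to showing that the error terms over the large coordinate spheres tend to $0$ at infinity. This is exactly the analysis carried over from \cite{He2} (and recorded, in the form that allows a non-compact boundary, in Theorem \ref{asymhypl}); granting it, the Einstein hypothesis makes the rest immediate. One should also keep in mind that $\mathcal N_b$ must be rich enough for ``$\mathfrak m_{(g,b)}\equiv 0$'' to trigger rigidity, but this is built into the framework of \cite{CH}.
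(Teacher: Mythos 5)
Your proposal is essentially the paper's own proof: the Einstein hypothesis together with the asymptotics forces ${\rm Ric}_g=-(n-1)g$, so the modified Einstein tensor $\widehat G_g$ vanishes (Remark \ref{remeins}), the boundaryless version of the alternate formula (\ref{asymhypl2}) from \cite{He2} then gives $\mathfrak m_{(g,b)}\equiv 0$, and the rigidity case of the positive mass theorem of \cite{CH} concludes. One small descriptive slip: (\ref{asymhypl2}) is a limit of flux integrals of $\widehat G_g$ over large coordinate spheres, not an absolutely convergent bulk integral over $M$, but since $\widehat G_g\equiv 0$ pointwise this does not affect the argument.
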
   

We now discuss another rigidity result stemming from our main theorems which is more directly related to the AdS/CFT correspondence mentioned above. Compared with Theorem \ref{riggen}, it allows us to substantially relax the assumption on the geometry of the non-compact boundary at the expense of requiring a more restrictive behavior at infinity, namely, conformal compactness. 

Let $\overline M$ be a compact $n$-manifold carrying a $(n-2)$-dimensional corner which can be written as $S\cap\Sigma$, where $S$ and $\Sigma$ are  smooth hypersurfaces of $M$ such that  $\partial \overline M=S\cup\Sigma$, with $S$ being connected. Let $g$ be a Riemannian metric on $M:={\rm int}\,\overline M\cup\Sigma$.  
We say that $(M,g)$ is {\em conformally compact} if there exists a collar neighborhood $\mathcal U\subset \overline M$ of $S$ such that on ${\rm int}\,\,\mathcal U$ we may write $g=s^{-2}\overline g$ with $\overline g$ extending to a sufficiently regular metric on $\mathcal U$ so that $S$ and $\Sigma$ meet orthogonally (with respect to $\overline g$) along their common boundary $S\cap \Sigma$, where $s:\mathcal U\to \mathbb R$ is a {\em defining function} for $S$ in the sense that  $s\geq 0$, $s^{-1}(0)=S$, $ds|_S\neq 0$ and $\nabla_{\overline g}s$ is tangent to $\Sigma$ along $\mathcal U\cap\Sigma$. 
The restriction $\overline g|_S$ defines a metric which changes by a conformal factor if the defining function is changed.
Thus, the conformal class $[\overline g|_S]$ of $\overline g|_S$ is well defined. We then say that the pair $(S,[\overline g|_S])$ is the {\em conformal infinity} of $(M,g)$,

If $|ds|_{\overline g}=1$ along $S$ then 
$(M,g)$ is {\em weakly} asymptotically hyperbolic in the sense that its 
sectional curvature converges to $-1$ as one approaches $S$. In this case, if $h_0$ is a metric on $S$ representing  the given conformal infinity then there exists a unique defining function $t$ in $\mathcal U$ so that
\begin{equation}\label{expan}
g=\sinh^{-2}t\left(dt^2+h_{t}\right),
\end{equation}
where $h_{t}$ is a $t$-dependent family of metrics on $S$ with $h_{t}|_{t=0}=h_0$. 

\begin{remark}\label{remians}{\rm 
Recall that (\ref{expan}) is established by means of a conformal deformation of the type $\widetilde g=e^{2f}\overline g$. Thus, if $\widetilde s=e^fs$ then $t$  defined by $\widetilde s=\sinh t$  is required to be the distance function to $S$ with respect to $\widetilde g$. The condition $|dt|_{\widetilde g}=1$ turns out to be a first order PDE for $f$, namely, 
\begin{equation}\label{nonchar}
\partial_sf=\frac{s}{2}\left(e^{2f}-|\nabla_{\overline g}f|^2_{\overline g}\right)+\frac{1-|\nabla_{\overline g}s|_{\overline g}^2}{2s},
\end{equation}
for which $S$ is a non-characteristic hypersurface.
Thus, (\ref{nonchar})
can be solved in $\mathcal U$ with initial data $f=0$ on $S$, so that (\ref{expan}) is retrieved by setting $\widetilde g=dt^2+h_t$, where $h_t$ is the restriction of $\widetilde g$ to the level hypersurfaces of $t$; see  \cite{AD, MP}  for further details.
If  $\xi$ is a normal unit vector field with respect to $\overline g$ along $\mathcal U\cap \Sigma$ then using that $\partial s/\partial\xi=\langle\nabla_{\overline g}s,\xi\rangle_{\overline g}=0$ we easily check from (\ref{nonchar}) that $p=\partial f/\partial \xi$ satisfies
\[
\partial_sp=s\left(e^{2f}p-(\nabla_{\overline g}f)(p)\right),
\]
and since $p=0$ for $s=0$ we see that  
$\partial f/\partial\xi=0$ along $\mathcal U\cap\Sigma$. Since 
\[
\nabla_{\widetilde g}\widetilde s=e^{-f}(s\nabla_{\overline g}f+\nabla_{\overline g}s),
\] 
this means that $\nabla_{\widetilde g}t=\cosh^{-1} t\nabla_{\widetilde g}\widetilde s$ remains tangent to $\Sigma$ along $\mathcal U\cap\Sigma$.  
}
\end{remark}

\begin{definition}\label{defasym2} 
	Let 
	$(M,g)$ be a weakly asymptotically hyperbolic manifold as above. We say that $(M,g)$ is {\em asymptotically hyperbolic} (in the conformally compact sense and with a non-compact boundary $\Sigma$) if its conformal infinity is $(\mathbb S_{+}^{n-1},[h_0])$, where $h_0$ is a round metric on $\mathbb S_{+}^{n-1}$, the unit upper $(n-1)$-hemisphere,
	and the following asymptotic expansion holds as $t\to 0$:
	\begin{equation}\label{asymexpcoll}
	h_{t}=h_0+\frac{t^n}{n!}h+k,
	\end{equation}
	where $h$ and $k$ are symmetric $2$-tensors on $\mathbb S^{n-1}_{+}$ and the remainder term $k$
	satisfies 
	\begin{equation}
	\label{remainder}
	|k|+|\nabla_{h_0} k|+|\nabla_{h_0}^2k|=o(t^{n+1}).
	\end{equation}
\end{definition} 

It turns out that a manifold which is asymptotically hyperbolic in this sense is also asymptotically hyperbolic in the sense of Definition \ref{def:as:hyp} below, so we may assign to it a mass-type invariant which extends Wang's construction in the boundaryless case \cite{Wa}. Thus, the rigidity statement of  Theorem \ref{conjmptheo} yields another natural extension to our setting of  the result by Andersson-Dahl \cite{AD,He1} mentioned above in connection with Theorems \ref{riggen} and \ref{riggennbd}.

\begin{theorem}\label{rigconfcom}
	Let $(M,g,\Sigma)$ be a conformally compact, asymptotically hyperbolic spin manifold as above. Assume further that $g$ is Einstein 
	and that the mean curvature of $\Sigma$ is everywhere nonnegative. Then $(M,g,\Sigma)=(\mathbb H^n_+,b,\partial \mathbb H^n_+)$ isometrically. 
	\end{theorem}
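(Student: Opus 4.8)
The plan is to derive Theorem~\ref{rigconfcom} from the positive mass inequality and its rigidity clause in Theorem~\ref{conjmptheo}, by checking that the mass-type invariant of $(M,g,\Sigma)$ vanishes once $g$ is Einstein and $\Sigma$ is mean convex. The first step is to justify the assertion already made above, namely that a manifold which is asymptotically hyperbolic in the conformally compact sense of Definition~\ref{defasym2} is also asymptotically hyperbolic in the sense of Definition~\ref{def:as:hyp}. In the collar $\mathcal U$ one has $g=\sinh^{-2}t\,(dt^2+h_t)$ with $h_t=h_0+\frac{t^n}{n!}h+k$, where $h_0$ is the round metric on $\mathbb S^{n-1}_+$ and $k$ satisfies (\ref{remainder}); rewriting this in the standard coordinates of $(\mathbb H^n_+,b)$ and using (\ref{remainder}) gives exactly the fall-off of $g-b$ and of its first two covariant derivatives demanded by Definition~\ref{def:as:hyp}. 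One also needs the non-compact boundary to be correctly asymptotic to $\partial\mathbb H^n_+$: by Remark~\ref{remians} the vector field $\nabla_{\widetilde g}t$ stays tangent to $\Sigma$ throughout $\mathcal U$, so $\Sigma\cap\mathcal U$ is the cylinder swept by the $t$-geodesics issuing from the corner $S\cap\Sigma=\partial\mathbb S^{n-1}_+$, which is a totally geodesic $(n-2)$-subsphere of $(\mathbb S^{n-1}_+,h_0)$. Hence the second fundamental form of $\Sigma$ in $(M,g)$ fulfils $A_\Sigma=\frac{t^{n+1}}{n!}\,\mathsf A(h)+o(t^{n+1})$, where $\mathsf A(h)$ is a universal linear expression in the restriction of $h$ and its normal derivative along $\partial\mathbb S^{n-1}_+$; in particular $\Sigma$ is asymptotically totally geodesic and $H_g=\frac{t^{n+1}}{n!}\,\mathrm{tr}\,\mathsf A(h)+o(t^{n+1})$.

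By the reduction just described, $(M,g,\Sigma)$ carries the mass functional of Theorems~\ref{maintheo} and \ref{conjmptheo}, and the second step is to evaluate it on the boundary data $(h,\mathsf A(h))$ by adapting Wang's asymptotic computation \cite{Wa} to the presence of the non-compact boundary. The expected outcome is a formula of the schematic shape
\[
\mathfrak m_{(g,b)}(V)=c_n\int_{\mathbb S^{n-1}_+}\big(\mathrm{tr}_{h_0}h\big)\,V\,d\mu_{h_0}+c_n'\int_{\partial\mathbb S^{n-1}_+}\big(\mathrm{tr}\,\mathsf A(h)\big)\,V\,d\sigma_{h_0},
\]
valid for $V$ in the appropriate space of static potentials (the reflection-even solutions of (\ref{stateq}) on $\mathbb H^n$); the reduction of the boundary integrand to its pure-trace part uses that $h$, hence $\mathsf A(h)$, is divergence-free, a consequence of the structure equations. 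Now bring in the Einstein condition: since $(M,g)$ is weakly asymptotically hyperbolic, Einstein forces $\mathrm{Ric}_g=-(n-1)g$ and hence $R_g=-n(n-1)$, so the dominant energy condition of Theorem~\ref{conjmptheo} holds with equality; moreover, expanding $\mathrm{Ric}_g+(n-1)g=0$ in the gauge $g=\sinh^{-2}t\,(dt^2+h_t)$ yields the constraint $\mathrm{tr}_{h_0}h=0$, the local obstruction term vanishing because $h_0$ is conformally flat. Consequently the bulk integral disappears and $\mathfrak m_{(g,b)}(V)=c_n'\int_{\partial\mathbb S^{n-1}_+}(\mathrm{tr}\,\mathsf A(h))\,V\,d\sigma_{h_0}$.

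It remains to kill the boundary term, and this is where $H_g\ge 0$ is used. From $H_g=\frac{t^{n+1}}{n!}\,\mathrm{tr}\,\mathsf A(h)+o(t^{n+1})$ near infinity, mean convexity forces $\mathrm{tr}\,\mathsf A(h)\ge 0$ on $\partial\mathbb S^{n-1}_+$; since the energy potential $V_0$ (the reflection-even static potential given by $\cosh$ of the distance to a point of $\partial\mathbb H^n_+$) is everywhere positive, one checks that the sign conventions make $\mathfrak m_{(g,b)}(V_0)\le 0$. On the other hand Theorem~\ref{conjmptheo} applies, its hypotheses having just been verified, and asserts that the mass-energy-momentum vector of $(M,g,\Sigma)$ lies in the closed future light cone, so in particular $\mathfrak m_{(g,b)}(V_0)\ge 0$; hence $\mathfrak m_{(g,b)}(V_0)=0$, and a future-causal vector with vanishing energy component must vanish, so $\mathfrak m_{(g,b)}\equiv 0$. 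The rigidity clause of Theorem~\ref{conjmptheo} then yields $(M,g,\Sigma)=(\mathbb H^n_+,b,\partial\mathbb H^n_+)$ isometrically. The main obstacle is the second step: implementing Wang's expansion in the half-space setting so as to pin down the precise boundary contribution to the mass and to extract the trace-free and divergence-free identities for $h$ from the Einstein equations written in the $\sinh^{-2}t$ gauge.
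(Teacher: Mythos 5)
Your overall strategy---reduce to Theorem \ref{conjmptheo} by showing that the mass vector vanishes---is the same as the paper's, and your bulk step (Einstein plus round hemispherical conformal infinity kills the contribution over $\mathbb S^{n-1}_+$, via the Anderson--Dahl computation) matches. The genuine gap is in your treatment of the boundary contribution, which is exactly the step you flag as the ``main obstacle''. The formula you posit, with a corner term $c_n'\int_{\partial\mathbb S^{n-1}_+}(\mathrm{tr}\,\mathsf A(h))\,V\,d\sigma_{h_0}$, is never derived and is not what definition (\ref{massdef}) produces: the only corner-at-infinity term in the mass is $-\int_{S^{n-2}_r}Ve(\eta,\vartheta)\,dS^{n-2}_r$, and in the gauge (\ref{expan}) the perturbation has no $dt$-components to the relevant order; choosing the frame with $\mathfrak f_1=\sinh t\,\partial_t=\vartheta$ and using Remark \ref{remians}, (\ref{remainder}) and the gauge map (\ref{asymfr}), one gets $\mathcal G\mathfrak f_1=\mathfrak f_1$ and hence $e(\eta,\vartheta)=o(t^{n-1})$. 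Since $V=O(t^{-1})$ and $dS^{n-2}_t=O(t^{2-n})$, the corner integral tends to zero for \emph{every} conformally compact asymptotically hyperbolic metric, Einstein or not, mean convex or not. This is how the paper argues: the mass vanishes outright, and the hypothesis $H_g\geq 0$ is used only so that Theorem \ref{conjmptheo} (whose proof needs the pointwise sign in (\ref{maintheo2})) is applicable; no sign argument on the asymptotic expansion is needed.

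Moreover, even granting your schematic formula, the assertion that ``the sign conventions make $\mathfrak m_{(g,b)}(V_0)\leq 0$'' is unsupported and looks backwards: in the Witten-type identity (\ref{maintheo2}) a nonnegative mean curvature contributes \emph{nonnegatively} to the mass, so there is no reason to expect mean convexity to force the asymptotic boundary term to be nonpositive; if the constant $c_n'$ has the opposite sign, your squeeze argument ($\leq 0$ combined with $\geq 0$) collapses and the proof does not close. To repair this along your lines you would have to actually carry out the half-space analogue of Wang's expansion and verify the sign---precisely the hard computation you postponed---whereas the paper avoids it by showing the corner term is identically zero in the limit. A smaller omission: the paper treats $n=3$ separately (there Einstein means constant curvature and Theorem \ref{maintheocor} applies), since the expansion argument via Anderson--Dahl is invoked only for $n\geq 4$.
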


 \begin{remark}\label{remove}
 	{\rm In recent years there have been considerable efforts by several authors in the direction of  removing the spin assumption in the seminal rigidity result appearing in \cite{AD}, the difficulty here coming from the fact that no suitable positive mass theorem has been proved in this generality; see for instance \cite{CLW,LQS,Ra} and the references therein. In any case, one might ask whether analogous developments hold in the presence of a boundary as in Theorem \ref{rigconfcom} above. On the other hand, Theorems \ref{riggen} and \ref{riggennbd} show that conformal compactness is not really needed in order to obtain a rigidity statement as long as we remain in the spin category, where the pertinent positive mass inequality is available. This suggests a  more ambitious goal, namely, to investigate whether the corresponding rigidity persists for Einstein metrics (and totally geodesic boundaries) in the general asymptotically hyperbolic setup of Definition \ref{def:as:hyp}.}
 	\end{remark}
	
This paper is organized as follows. In Section \ref{prelim} we define the relevant class of asymptotically hyperbolic manifolds with a non-compact boundary and in Section \ref{geomass} we attach to each manifold in this class a mass functional whose geometric invariance is established. The proofs of the positive mass inequality for spin manifolds and its geometric consequences, including the rigidity statements above, are presented in Section \ref{mainsec}. This uses some preparatory material regarding spinors on manifolds with boundary which is discussed in Section \ref{spinorsbd}.

\section{Asymptotically hyperbolic manifolds}\label{prelim}

Recall that the {hyperboloid model} for hyperbolic space in dimension $n$ is given by 
$$
\mathbb H^n=\{x\in\R^{1,n}; x_0>0, \langle x,x\rangle_L=-1\},
$$ 
where $\R^{1,n}$ is the Minkowski space with the standard flat metric 
\[
\langle x,x\rangle_L=-x_0^2+x_1^2+\cdots+x_n^2.
\] 
The reference space we are interested in is $(\mathbb H^n_+,b,\partial\mathbb H^n_+)$, 
where $\mathbb H_+^n=\{x\in\mathbb H^n;x_n\geq 0\}$ is the {\em hyperbolic half-space} endowed with
the induced metric 
$$
b=\frac{dr^2}{1+r^2}+r^2h_0,
$$
where $h_0$ stands for the canonical metric on the unit hemisphere $\mathbb S^{n-1}_+$, and 
\[
r=\sqrt{x_1^2+\cdots +x_n^2}.
\] 
Note that $\mathbb H^n_+$
carries a non-compact, totally geodesic boundary, namely, $\partial \mathbb H^n_+=\{x\in\mathbb H^n_+;x_n=0\}$.
Recall that the space of static potentials $\mathcal N_b$ on $\mathbb H^n$ is spanned by   
$V_{(0)},V_{(1)},\cdots,V_{(n)}$, where $V_{(i)}=x_i|_{\mathbb H^n}$. It is easy to check that 
	\begin{equation}\label{statpotbd}
		\frac{\partial V_{(i)}}{\partial\eta}=0, \quad i\neq n,
	\end{equation}
	where $\eta$ is the outward unit normal to $\partial\mathbb H^n_+$.
Thus, we are led to consider 
\[
\mathcal N_b^+=\left\{V\in \mathcal N_b;\frac{\partial V}{\partial\eta}=0\right\}, 
\]
which is spanned by $V_{(0)},V_{(1)},\cdots,V_{(n-1)}$. In particular, $V=O(r)$ as $r\to +\infty$ for any $V\in\mathcal N_b^+$. 

\begin{remark}\label{ballmodel}{\rm                                                     
The construction above can alternatively be carried out in the context of the so-called { Poincar\'e model} of hyperbolic geometry. Hence, we may consider the half $n$-disk $\mathbb B^n_+=\{x'=(x_1,\cdots,x_n)\in\mathbb R^n\:|\:|x'|\leq 1, x_n\geq 0\}$ endowed with the metric
\begin{equation}\label{confrep}
\widehat b=\omega(x')^{-2} \delta,\quad \omega(x')=\frac{1-|x'|^2}{2},
\end{equation}
where $\delta$ is the standard Euclidean metric.
The space $\mathcal N_{\widehat b}^+$ now is spanned by the functions 
$$
\widehat V_{(0)}(x')=\frac{1+|x'|^2}{1-|x'|^2}, \:\widehat V_{(1)}(x')=\frac{2x_1}{1-|x'|^2},\: \cdots\:, \:\widehat V_{(n-1)}(x')=\frac{2x_{n-1}}{1-|x'|^2}. 
$$
Under the isometry  between $(\mathbb H^n_+,b)$ and $(\mathbb B^n_+,\widehat b)$ given by stereographic projection ``centered'' at $(-1,0,\cdots,0)$, 
$\partial\mathbb  H^n_+$ is mapped onto the unit $(n-1)$-disk $\partial \mathbb B_+^n$ defined by $x_n=0$.  Whenever convenience demands we will interchange freely between these models without further notice. 
}
\end{remark}

\begin{remark}\label{staticsp}{\rm 
		The linear space $\mathcal N_b^+$ can actually be thought of as a space of static potentials on $\mathbb H^n_+$ as follows. 
		It is well known that a vacuum solution $(\overline M^{n+1},\overline g)$ of the Einstein field equations can be characterized as an extremizer  of the Einstein-Hilbert functional 
		$$
		\overline g\mapsto\int_{\overline M} (R_{\overline g}-2\Lambda)dv_{\overline g},
		$$
		with $\Lambda\in \mathbb R$. In the case $\d {\overline M}\neq \emptyset$, it is natural to consider instead the Gibbons-Hawking-York action
		$$
		\overline g\mapsto \int_{\overline M} (R_{\overline g}-2\Lambda)\,dv_{\overline g}+\int_{\d {\overline M}} 2(H_{\overline g}-\lambda)\,d\sigma_{\overline g}
		$$
		whose critical metrics satisfy the system
		\begin{equation}\label{eq:spacetime}
		\begin{cases}
		{\rm Ric}_{\overline g}-\frac{1}{2}R_{\overline g}\overline{g}+\Lambda\overline g=0, &\text{in}\:{\overline M},
		\\
		\Pi_{\overline g}-H_{\overline g} \overline g|_{\d\overline M}+\lambda \overline g|_{\d\overline M}=0,&\text{on}\:\d {\overline M},
		\end{cases}
		\end{equation}
		where $\Pi$ is the boundary second fundamental form and $\lambda\in\mathbb R$.
		The first equation of (\ref{eq:spacetime}) is the Einstein field equation for a vacuum spacetime with cosmological constant $\Lambda$, while the second one introduces the constant $\lambda$ which is related to the geometry of $\partial\overline M$. Recall that the spacetime $(\overline M,\overline g)$ is said to be {\em static} if it carries a time-like Killing vector field $\xi$ whose orthogonal distribution is integrable or, equivalently, $\overline g$ can be written as a warped product ${\overline g}=-V^2dt^2+g$, where $g$ is a Riemannian metric on the spacelike slice $M^n$ and $V^2=-\overline g(\xi,\xi)$.
The leaves of this foliation are totally geodesic and isometric to each other. If  $\d M\neq \emptyset$, those leaves are orthogonal to $\partial\overline M$ which corresponds to $\xi$ being tangent to $\partial\overline M$. In this case, the system (\ref{eq:spacetime}) is equivalent to requiring that $g$ and $V$ satisfy
		\begin{equation}\label{eq:static:1}
		\begin{cases}
		-V {\rm Ric}_g+\nabla_g^2V+\widetilde\Lambda Vg=0,&\text{in}\:M,
		\\
		\Pi_g-\widetilde\lambda g|_{\partial M}=0,&\text{on}\:\d M,
		\\
		\Delta_g V=-\widetilde\Lambda V, &\text{in}\:M,
		\\
		\displaystyle\frac{\d V}{\d\eta}=\widetilde\lambda V, &\text{on}\:\d M,
		\end{cases}
		\end{equation}
		where $\widetilde \Lambda=\frac{2}{n-1}\Lambda$ and $\widetilde\lambda=\frac{1}{n-1}\lambda$. Thus, by setting $\widetilde\Lambda=-n$ and $\widetilde\lambda=0$,
		the elements $V\in\mathcal N_b^+$ are solutions of  (\ref{eq:static:1}) when $(M,g)=(\mathbb H_+^n, b)$.
	}
\end{remark}

We now define the  notion of an asymptotically hyperbolic manifold with a non-compact boundary having $(\mathbb H^n_+,b,\partial\mathbb H^n_+)$ as a model; this should be compared with the related boundaryless concept in \cite{CH}.
Recall that $\mathbb H^n_+$ can be parameterized by polar coordinates $(r,\theta)$, where $r=\sqrt{x_1^2+\cdots+x_{n}^2}$ and $\theta=(\theta_2,\cdots,\theta_{n})\in \mathbb S^{n-1}_+$. 
For all $r_0>0$ large enough let us set $\mathbb H^n_{+,r_0}=\{x\in\mathbb H_+^n;r(x)\geq r_0\}$.

\begin{definition}\label{def:as:hyp}
	We say that $(M^n,g,\Sigma)$ is {\it{asymptotically hyperbolic}} (with a non-compact boundary $\Sigma$) if there exist $r_0>0$, a region $M_{{\rm ext}}\subset M$ and a diffeomorphism
	$
	F:\mathbb H_{+,r_0}^n\to  M_{{\rm ext}}
	$
	such that: 
	\begin{enumerate}
	 \item As $r\to+\infty$,  $e:=F^*g-b$ satisfies 
	\begin{equation}\label{asympthyp}
	|e|_b+|\nabla_b e|_b+|\nabla^2_be|_b=O(r^{-\tau}), \quad \tau>\frac{n}{2};
	\end{equation}
	\item both $\int_M r(R_g+n(n-1))dM$ and $\int_\Sigma rH_gd\Sigma$ are finite, where the asymptotical radial coordinate $r$ has been smoothly extended to $M$.
	\end{enumerate}
\end{definition} 
\begin{remark}\label{rmk:bd}
{\rm Although $\Sigma$ may be disconnected, it follows from Definition \ref{def:as:hyp} that $\Sigma$ has exactly one non-compact component.}
\end{remark}

\section{The mass functional and its geometric invariance}\label{geomass}

Here we define the mass functional for an asymptotically hyperbolic manifold and establish its geometric invariance.  Given a chart at infinity $F: \mathbb H^n_{+,r_0}\to M_{\rm ext}$ as in Definition \ref{def:as:hyp} we set, for $r_0<r<r'$,
$
A_{r,r'}=\{x\in \mathbb H^n_{+,r_0};r\leq |x| \leq r'\}$,
$\Sigma_{r,r'}=\{x\in \partial \mathbb H_{+,r_0}^n;r\leq |x| \leq r'\}
$ and
$S^{n-1}_{r,+}=\{x\in \mathbb H_{+,r_0}^n|;|x|=r\}$, 
so that 
$$\d A_{r,r'}=S^{n-1}_{r,+}\cup \Sigma_{r,r'}\cup S^{n-1}_{r',+}.$$
We represent by $\mu$ the outward unit normal vector field to $S^{n-1}_{r,+}$ or $S^{n-1}_{r',+}$, computed with respect to the reference metric $b$. Also, we consider $S^{n-2}_r=\partial S_{r,+}^{n-1}\subset \partial \mathbb H^n_{+,r}$, endowed with its outward unit conormal field $\vartheta$, again  with respect to $b$.
We set $e=g-b$, where we have written $g=F^*g$ for simplicity of notation, and we define the $1$-form
\begin{equation}\label{charge}
\mathbb U(V,e)=V({\rm div}_be-d{\rm tr}_be)-{\nabla_bV}\righthalfcup e+{\rm tr}_be\, dV,
\end{equation}
for a static potential $V\in\mathcal N_b^+$. 

\begin{theorem}\label{finitemass}
	If $(M,g,\Sigma)$ is an asymptotically hyperbolic manifold then the quantity
	\begin{equation}\label{massdef}
	\mathfrak m_{(g,b,F)}(V)=\lim_{r\to +\infty}\left[\int_{S^{n-1}_{r,+}}\langle\mathbb U(V,e),\mu\rangle dS^{n-1}_{r,+}-
	\int_{S^{n-2}_{r}}Ve(\eta,\vartheta)dS^{n-2}_{r}\right]
	\end{equation}
exists and is finite.	
	\end{theorem}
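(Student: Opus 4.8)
The plan is to show that the integrand differences appearing when one compares the integral over $S^{n-1}_{r,+}$ (together with its conormal correction on $S^{n-2}_r$) at two different radii $r$ and $r'$ tend to $0$ as $r\to\infty$, so that the limit in \eqref{massdef} exists; finiteness is then automatic. The natural tool is the divergence theorem applied on the region $A_{r,r'}$. First I would compute the exterior derivative (or divergence) of the $1$-form $\mathbb U(V,e)$. A standard computation, going back to the boundaryless setting of \cite{CH}, shows that for $V\in\mathcal N_b^+$ one has a pointwise identity of the schematic form
\begin{equation}\label{divU}
{\rm div}_b\,\mathbb U(V,e)=\langle \mathbb T(V),e\rangle+V\big(R_g-R_b\big)+Q(e,\nabla_b e),
\end{equation}
where $\mathbb T(V)$ is a first-order linear operator in $V$ built from the static equation \eqref{stateq} (so it is $b$-parallel in the relevant sense and bounded), $R_b=-n(n-1)$, and $Q$ is quadratic in $e$ and $\nabla_b e$ with coefficients that are $O(1)$ in $b$-norm. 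The key point is that the linear-in-$e$ terms that are {\em not} multiplied by $V(R_g+n(n-1))$ assemble, after using \eqref{stateq}, into a total divergence or cancel, exactly as in the classical hyperbolic mass computation.

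Next I would integrate \eqref{divU} over $A_{r,r'}$. The left-hand side becomes, by Stokes' theorem, the difference of the sphere integrals $\int_{S^{n-1}_{r',+}}\langle\mathbb U(V,e),\mu\rangle-\int_{S^{n-1}_{r,+}}\langle\mathbb U(V,e),\mu\rangle$ {\em plus} a boundary contribution along $\Sigma_{r,r'}$, namely $\int_{\Sigma_{r,r'}}\langle\mathbb U(V,e),\eta\rangle\,d\Sigma$. Here is where the non-compact boundary enters and where the conormal correction term $\int_{S^{n-2}_r}Ve(\eta,\vartheta)\,dS^{n-2}_r$ in \eqref{massdef} comes from: using $\partial V/\partial\eta=0$ for $V\in\mathcal N_b^+$ (equation \eqref{statpotbd}) together with the fact that $\partial\mathbb H^n_+$ is totally geodesic, one shows that the restriction of $\langle\mathbb U(V,e),\eta\rangle$ to $\Sigma_{r,r'}$ is itself, up to terms that are quadratic in $e$ or involve $H_g$, a divergence {\em within} $\Sigma$ of the $(n-2)$-form $Ve(\eta,\cdot)$. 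Applying the divergence theorem on $\Sigma_{r,r'}$ converts $\int_{\Sigma_{r,r'}}\langle\mathbb U(V,e),\eta\rangle$ into the difference of the conormal integrals over $S^{n-2}_{r'}$ and $S^{n-2}_r$ plus a term controlled by $\int_{\Sigma_{r,r'}} V H_g\,d\Sigma$. Thus the bracketed quantity in \eqref{massdef} is monotone-in-$r$ up to error terms, and one is reduced to estimating those error terms.

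Finally I would bound the error terms. The quadratic terms $Q(e,\nabla_b e)$ integrated over $A_{r,r'}$ are $O(\int_{A_{r,r'}} r\cdot r^{-2\tau}\,r^{n-1}\,dr)$ in comparison to the volume growth (the extra factor $r$ coming from $V=O(r)$), which converges since $2\tau>n$, hence the tail tends to $0$; similarly for the boundary quadratic terms, which have one fewer dimension. The linear terms $V(R_g+n(n-1))$ and $VH_g$ are integrable by hypothesis (ii) of Definition \ref{def:as:hyp}, so their tails over $A_{r,r'}$ and $\Sigma_{r,r'}$ also go to $0$. Combining, the bracketed expression in \eqref{massdef} is Cauchy as $r\to\infty$, so the limit exists and is finite. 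The main obstacle I anticipate is the careful bookkeeping in \eqref{divU} and its boundary analog: one must verify that, modulo quadratic-in-$e$ remainders, the terms linear in $e$ reorganize precisely into the divergences of $\mathbb U(V,e)$ on $A_{r,r'}$ and of $Ve(\eta,\cdot)$ on $\Sigma$, using only \eqref{stateq}, \eqref{statpotbd} and the total geodesy of $\partial\mathbb H^n_+$. Getting the boundary integration-by-parts to produce exactly the conormal term written in \eqref{massdef}, with no leftover non-integrable contribution, is the crux; everything else is the decay estimate, which is routine given \eqref{asympthyp} and hypothesis (ii).
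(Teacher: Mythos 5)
Your proposal is correct and follows essentially the same route as the paper: expand $V(R_g+n(n-1))$ as ${\rm div}_b\mathbb U(V,e)$ plus a quadratic remainder via the static equation, integrate over $A_{r,r'}$, and convert the $\Sigma_{r,r'}$ contribution into the $S^{n-2}_r$ conormal terms plus a piece absorbed by $\int V H_g$, with the tails controlled by $2\tau>n$ and hypothesis (2) of Definition \ref{def:as:hyp}. The boundary bookkeeping you flag as the crux is carried out in the paper exactly as you anticipate, using the linearization $2\dot H_be=[d{\rm tr}_be-{\rm div}_be](\eta)-{\rm div}_\beta X-\langle\Pi_b,e\rangle_\beta$ (with $\Pi_b=0$, $dV(\eta)=0$) and the identity $-V{\rm div}_\beta X=-{\rm div}_\beta(VX)+({\nabla_bV}\righthalfcup e)(\eta)$, so your plan goes through; only note that the bracketed quantity is Cauchy rather than monotone in $r$.
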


\begin{proof}
The argument is based on the Taylor expansion, as $r\to +\infty$, of the scalar curvature $R_g$ around the reference metric $b$; see \cite{Mi, He1} for nice descriptions of the underlying strategy. We have 
\[
R_g=-n(n-1)+\dot R_be+\rho_b(e), 
\]
where 
\[
\dot R_be={\rm div}_b({\rm div}_be-d{\rm tr}_be)+(n-1){\rm tr}_be
\]
is the linearization of the scalar curvature at $b$  
and $\rho_b(e)=O(r^{-2\tau})$ denotes terms which are at least quadratic in $e=g-b$.
The key point is to properly handle the linear term. 
Indeed, a well-known computation gives 
the fundamental identity
\[
V\dot R_be=  \langle \dot R_b^*V,e\rangle +{\rm div}_b\mathbb U(V,e), 
\]
where 
\[
\dot R_b^*V=\nabla^2_bV-Vb
\]
is the formal  $L^2$ adjoint of $\dot R_b$. Since $V\in \mathcal N_b^+$ we thus get from (\ref{stateq}),
\begin{equation}\label{key2}
V(R_g+n(n-1))={\rm div}_b\mathbb U(V,e)+\rho_b(V,e), 
\end{equation}
where $\rho_b(V,e)=V\rho_b(e)=O(r^{-2\tau+1})$ since $V=O(r)$.

We now perform the integration of (\ref{key2}) over the half-annular region $A:=A_{r,r'}\subset \mathbb H^n_+$ and eventually explore the imposed boundary conditions, namely, 
\begin{equation}\label{Vbd}
\Pi_b=0,\quad \frac{\partial V}{\partial\eta}=0,
\end{equation}
where $\Pi_b$ is the second fundamental form of $\partial\mathbb H^n_+$ and $V\in\mathcal N_b^+$; see (\ref{statpotbd}).
Integration yields 
\begin{eqnarray*}
	\int_{A}V(R_g-R_b)dA_b 
	& = & \int_{S^{n-1}_{r',+}}\langle\mathbb U(V,e),\mu\rangle dS^{n-1}_{r,+}-
	\int_{S^{n-1}_{r,+}}\langle\mathbb U(V,e),\mu\rangle dS^{n-1}_{r,+}\\
	&  & +\int_{\Sigma_{r,r'}}V({\rm div}_be-d{\rm tr}_be)(\eta)d\Sigma_\beta\\
	& & -\int_{\Sigma_{r,r'}}({\nabla_bV}\righthalfcup e)(\eta)d\Sigma_\beta
	+ \int_{A}\rho_b(V,e)dA_b,  
\end{eqnarray*}
where $\beta=b|_{\Sigma}$ and we used that $dV(\eta)=0$ by (\ref{Vbd}).
We now observe that 
the mean curvature varies as 
\[
2\dot H_be=[d{\rm tr}_be-{\rm div}_be](\eta)-{\rm div}_\beta X-\langle \Pi_b,e\rangle_\beta,
\]
where $X$ is the vector field dual to the $1$-form $e(\eta,\cdot)|_{T\Sigma}$ and we still denote $e=e|_\Sigma$. 
Hence, upon expansion around the reference metric $\beta$ and taking into account that $\Pi_b=0$ by (\ref{Vbd}),
\begin{eqnarray*}
	2\int_{\Sigma_{r,r'}}VH_gd\Sigma_{\beta}
	& = & \int_{\Sigma_{r,r'}}V(d{\rm tr}_be-{\rm div}_be)(\eta)d\Sigma_\beta-\int_{\Sigma_{r,r'}}V{\rm div}_{\beta}Xd\Sigma_\beta\\
	& & \quad +\int_{\Sigma_{r,r'}}\rho_\beta(V,e)d\Sigma_{\beta},
\end{eqnarray*}
where $\rho_\beta(V,e)=O(r^{-2\tau+1})$. Thus, if 
\[
\mathcal A_{r,r'}(g):=\int_{A}V(R_g+n(n-1))dA_b+2\int_{\Sigma_{r,r'}}VH_gd\Sigma_{\beta},
\]
then
\begin{eqnarray*}
	\mathcal A_{r,r'}(g)
	& =  & 	\int_{S^{n-1}_{r',+}}\langle\mathbb U(V,e),\mu\rangle dS^{n-1}_{r,+}-
	\int_{S^{n-1}_{r,+}}\langle\mathbb U(V,e),\mu\rangle dS^{n-1}_{r,+}\\
	& & -\int_{\Sigma_{r,r'}}({\nabla_bV}\righthalfcup e)(\eta)d\Sigma_\beta-\int_{\Sigma_{r,r'}}V{\rm div}_{\beta}Xd\Sigma_\beta\\
	& & +\int_{A}\rho_b(V,e)dA_b+ \int_{\Sigma_{r,r'}}\rho_\beta(V,e)d\Sigma_{\beta}.
\end{eqnarray*}
But notice that 
\begin{eqnarray*}
	-V{\rm div}_\beta X 
	& = & -{\rm div}_\beta(VX)+\langle\nabla_bV,X\rangle_\beta\\
	& = & -{\rm div}_\beta(VX)+e(\nabla_bV,\eta)\\
	& = & -{\rm div}_\beta(VX)+({\nabla_bV}\righthalfcup e)(\eta),
\end{eqnarray*}
and we end up with 
\begin{eqnarray*}
	\mathcal A_{r,r'}(g)
	& =  & 	\int_{S^{n-1}_{r',+}}\langle\mathbb U(V,e),\mu\rangle dS^{n-1}_{r,+}-
	\int_{S^{n-1}_{r,+}}\langle\mathbb U(V,e),\mu\rangle dS^{n-1}_{r,+}\\
	& & -\int_{\Sigma_{r,r'}}{\rm div}_\beta(VX)d\Sigma_{\beta}\\
	& & +\int_{A}\rho_b(V,e)dA_b+ \int_{\Sigma_{r,r'}}\mathcal \rho_\beta(V,e)d\Sigma_{\beta}\\
	& =  & 	\left(\int_{S^{n-1}_{r',+}}\langle\mathbb U(V,e),\mu\rangle dS^{n-1}_{r,+}-
	\int_{S^{n-2}_{r'}}Ve(\eta,\vartheta) dS^{n-2}_{r}\right)\\
	& & -\left(\int_{S^{n-1}_{r,+}}\langle\mathbb U(V,e),\mu\rangle dS^{n-1}_{r,+}-
	\int_{S^{n-2}_{r}}Ve(\eta,\vartheta) dS^{n-2}_{r}\right)\\
	& & +\int_{A_{r,r'}}\rho_b(V,e)dA_b+ \int_{\Sigma_{r,r'}}\rho_\beta(V,e)d\Sigma_{\beta}.
\end{eqnarray*}
Now, 
the assumption $\tau>n/2$ in (\ref{asympthyp}) implies that the last two integrals vanish as $r\to +\infty$. Also, since $V=O(r)$ the integrability assumptions in Definiton \ref{def:as:hyp}, (2), imply that $\mathcal A_{r,r'}(g)\to 0$ as $r\to +\infty$ and the result follows.  
\end{proof}

We should think of (\ref{massdef}) as defining a linear functional
\[
\mathfrak m_{(g,b,F)}:\mathcal N_b^+\to\mathbb R.
\]
We note however that the decomposition $g=b+e$ used above depends on the choice of an admissible chart at infinity (the diffeomorphism $F$), so  we need to check that $\mathfrak m_{(g,b,F)}$ behaves properly as we pass from one such chart to another. For this we need some preliminary results. 

\begin{lemma}\label{exact}
	If $V\in\mathcal N_b^+$ and $X$ is a vector field then 
	\begin{equation}\label{exact1}
	\mathbb U(V,\mathcal L_Xb)={\rm div}_b\mathbb V(V,X,b),
	\end{equation}
	with the $2$-form $\mathbb V$ being explicitly given by 
	\begin{equation}\label{exact2}
	\mathbb V_{ik}=V(X_{i;k}-X_{k;i})+2(X_kV_i-X_iV_k),
	\end{equation}
	where the semi-colon denotes covariant derivation with respect to $b$.
	\end{lemma}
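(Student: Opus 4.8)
The plan is to verify identity \eqref{exact1} by a direct tensorial computation, expressing both sides in local coordinates (or with respect to a $b$-orthonormal frame) and exploiting the static equation \eqref{stateq} satisfied by $V$. First I would compute $\mathcal L_Xb$, whose components are $(\mathcal L_Xb)_{ik}=X_{i;k}+X_{k;i}$, and substitute into the definition \eqref{charge} of $\mathbb U$. This produces four groups of terms: the divergence term $V\,{\rm div}_b(\mathcal L_Xb)$, the trace-gradient term $-V\,d\,{\rm tr}_b(\mathcal L_Xb)$, the contraction $-\nabla_bV\righthalfcup(\mathcal L_Xb)$, and the term ${\rm tr}_b(\mathcal L_Xb)\,dV$. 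Note ${\rm tr}_b(\mathcal L_Xb)=2\,{\rm div}_bX$, so the last two terms combine into something manageable. For the first two, I would use the standard Bochner-type commutation formulas: ${\rm div}_b(\mathcal L_Xb)_k = \Delta_bX_k + {\rm Ric}_b(X)_k + \nabla_k{\rm div}_bX$ and $d\,{\rm tr}_b(\mathcal L_Xb) = 2\,d\,{\rm div}_bX$. Since $b$ is hyperbolic, ${\rm Ric}_b = -(n-1)b$, so the Ricci contribution is simply $-(n-1)X_k$.

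The next step is to compute ${\rm div}_b\mathbb V$ directly from \eqref{exact2} and match. Writing $\mathbb V_{ik}=V(X_{i;k}-X_{k;i})+2(X_kV_i-X_iV_k)$, I would take the divergence on the index $i$ (or whichever convention makes $\mathbb U$ come out as stated) and expand using the Leibniz rule. The derivative hitting $V$ produces $V_{;i}(X_{i;k}-X_{k;i})$ and $2V_{;i}(X_kV_i-X_iV_k)$; the derivative hitting the antisymmetric part of the first block produces $V(X_{i;ki}-X_{k;ii})$, and commuting covariant derivatives here is exactly where the curvature of $b$ re-enters, again via ${\rm Ric}_b=-(n-1)b$ and the symmetries of the Riemann tensor of a space form; the derivative hitting the second block produces $2(X_{k;i}V_i + X_kV_{i;i} - X_{i;i}V_k - X_iV_{k;i})$. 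Crucially, the static equation $\nabla^2_bV = Vb$, i.e.\ $V_{i;k}=Vb_{ik}$, lets me replace every occurrence of $\nabla^2 V$ by $V$ times the metric, which is what collapses the many second-order terms in $V$ down to the algebraic terms appearing in $\mathbb U$; it also gives $\Delta_bV = nV$. Assembling everything and cancelling, the two sides should agree.

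I would expect the main obstacle to be purely bookkeeping: keeping track of index placement, the antisymmetrization conventions in \eqref{exact2}, and the precise sign/normalization in the commutator identities so that the curvature terms from ${\rm div}_b(\mathcal L_Xb)$ on the left exactly cancel those from ${\rm div}_b\mathbb V$ on the right. There is no conceptual difficulty — the lemma is the analogue, adapted to the hyperbolic background and to the particular $1$-form $\mathbb U$, of the classical fact that the ADM-type charge integrand of a Lie derivative of the metric is exact (a manifestation of diffeomorphism invariance) — but the computation is unforgiving about signs. A cleaner alternative, which I would use as a sanity check, is to argue more structurally: $\mathbb U(V,\cdot)$ is (up to the boundary terms, which are absent here since $\mathbb V$ is globally defined) the integrand of the mass functional, and $\mathfrak m$ is invariant under the flow of $X$ when $e=\mathcal L_Xb$ is ``pure gauge'', forcing $\mathbb U(V,\mathcal L_Xb)$ to be exact; one then pins down the primitive $\mathbb V$ by computing the leading terms. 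But for a self-contained proof the direct verification via \eqref{stateq} and the space-form curvature identities is the most reliable route, and I would carry it out in an orthonormal frame at a point to minimize the number of Christoffel symbols in play.
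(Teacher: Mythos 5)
Your plan is essentially the paper's own proof: a direct tensorial verification of \eqref{exact1}, expanding $\mathbb U(V,\mathcal L_Xb)$ with $(\mathcal L_Xb)_{ij}=X_{i;j}+X_{j;i}$, commuting covariant derivatives so that ${\rm Ric}_b=-(n-1)b$ enters, and using $\nabla^2_bV=Vb$ (hence $\Delta_bV=nV$) to cancel the leftover zeroth- and first-order terms, leaving exactly ${\rm div}_b\mathbb V$. The only cosmetic difference is that the paper rewrites the left-hand side term by term into a total divergence rather than expanding both sides and matching, so your proposal is correct and not a genuinely different route.
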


\begin{proof}
Using $(\mathcal L_Xb)_{ij}=X_{i;j}+X_{j;i}$ and (\ref{charge}) we see that 
\[
\mathbb U_i=I_i^{(1)}+I_i^{(2)}+I_i^{(3)},
\]
where 
\begin{eqnarray*}
I_i^{(1)} &= & V(X_{i;k}^{\:\:\:k}+X^k_{\:\:;ik}-2X^k_{\:\:;ki})\\
& = & V(X_{i;k}^{\:\:\:k}+X^k_{\:\:;ik}-2X^k_{\:\:;ik}+2{\rm Ric}^b_{ik}X^k)
\\
&  =  & V(X_{i;}^{\:\:\:k}-X^k_{\:\:;i})_{;k}-2(n-1)VX_i,
\end{eqnarray*}
\begin{eqnarray*}
I_i^{(2)} &= & -(X_{i;k}+X_{k;i})V^k\\
& = & (X_{i;k}-X_{k;i})V^k-2X_{i;k}V^k\\
&= & (X_{i;k}-X_{k;i})V^k-2(X_{i}V^k)_{;k}+2X_i\Delta_b V,
\end{eqnarray*}
and 
\begin{eqnarray*}
I_i^{(3)}&=& 2X^k_{\:\:\:;k}V_ i\\
& = & 2(X^kV_i)_{;k}-2X^k(\nabla^2_bV)_{ik}.
\end{eqnarray*}
Thus, 
\begin{eqnarray*}
\mathbb U_i 
& = &
\left(V(X_{i;}^{\:\:k}-X^k_{\:\:;i})-2X_iV^k+2X^kV_ i\right)_{;k}\\
& & \quad +2(-(n-1)VX_i-(\nabla^2_bV)_{ik}X^k+\Delta_bVX_i)\\
& = &
\left(V(X_{i;}^{\:\:k}-X^k_{\:\:;i})-2X_iV^k+2X^kV_ i\right)_{;k}\\
& & \quad +2(Vb-\nabla^2_bV)_{ik}X^k,
\end{eqnarray*}
and the last term drops out since $V\in\mathcal N_b^+$. Hence,  
$\mathbb U_i=b^{jk}\mathbb V_{ij;k}$.
\end{proof}

The next result shows that the reference space $(\mathbb H^n_+,b,\partial\mathbb H^N_+)$ is rigid at infinity in the appropriate sense.

\begin{lemma}\label{rigid}
	If $F:\mathbb H^n_+\to \mathbb H^n_+$ is a diffeomorphism such that $F^*b=b+O(r^{-\tau})$ as $r\to +\infty$ then there exists an isometry $\mathcal I$ of $(\mathbb H^n_+,b)$ which preserves $\partial\mathbb H^n_+$ and satisfies 
	\[
	F=\mathcal I+O(r^{-\tau}),
	\] 
with a similar estimate holding for the first order derivatives.
	\end{lemma}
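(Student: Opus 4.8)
The plan is to reduce this boundary statement to the classical rigidity-at-infinity fact for ordinary hyperbolic space by a doubling argument, and then to track the boundary condition through that reduction.

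\textbf{Step 1: Doubling.} First I would observe that $(\mathbb H^n_+,b)$ is a totally geodesic domain in $(\mathbb H^n,b)$, so that $\mathbb H^n$ is the double of $\mathbb H^n_+$ across $\partial\mathbb H^n_+$. A diffeomorphism $F:\mathbb H^n_+\to\mathbb H^n_+$ fixing the boundary setwise need not send the boundary to the boundary a priori, but in fact the hypothesis $F^*b = b + O(r^{-\tau})$ forces $F(\partial\mathbb H^n_+)$ to be asymptotic to $\partial\mathbb H^n_+$; one should first upgrade this to showing $F$ maps $\partial\mathbb H^n_+$ to itself (or correct $F$ by a diffeomorphism that is $O(r^{-\tau})$ so that it does). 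Granting that, reflection produces a diffeomorphism $\widetilde F:\mathbb H^n\to\mathbb H^n$, piecewise smooth across $\partial\mathbb H^n_+$, with $\widetilde F^*b = b + O(r^{-\tau})$; the decay estimate and its first-order version pass to the double because $\partial\mathbb H^n_+$ is totally geodesic, so reflection is an isometry of $b$ and the pulled-back tensor $\widetilde F^*b - b$ inherits the decay from each side. (Smoothness of $\widetilde F$ along the seam is not essential; one only needs enough regularity to run the next step, and it can be arranged by the usual trick of composing with a boundary-respecting smoothing that is itself $O(r^{-\tau})$.)

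\textbf{Step 2: Apply the boundaryless rigidity lemma.} Now I invoke the known rigidity-at-infinity statement for $(\mathbb H^n,b)$: any diffeomorphism $\widetilde F$ of $\mathbb H^n$ with $\widetilde F^*b = b + O(r^{-\tau})$ (plus first derivatives) agrees, up to $O(r^{-\tau})$, with an isometry $\widetilde{\mathcal I}\in O(1,n)$ of hyperbolic space. This is the analogue, in the hyperbolic setting, of the Euclidean fact used in \cite{Bar, ABdL}, and is precisely the ingredient already implicitly used in \cite{CH} for the invariance of the boundaryless mass; I would cite it rather than reprove it. The reflection symmetry of $\widetilde F$ across $\partial\mathbb H^n_+$ (which holds modulo $O(r^{-\tau})$) then forces the isometry $\widetilde{\mathcal I}$ to commute with the reflection across $\partial\mathbb H^n_+$, hence to preserve the totally geodesic hyperplane $\partial\mathbb H^n_+$. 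Its restriction $\mathcal I := \widetilde{\mathcal I}|_{\mathbb H^n_+}$ is therefore an isometry of $(\mathbb H^n_+,b)$ preserving $\partial\mathbb H^n_+$, and $F = \mathcal I + O(r^{-\tau})$ with the matching first-order estimate, as claimed.

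\textbf{Main obstacle.} The delicate point is Step 1: showing that the asymptotic condition $F^*b = b + O(r^{-\tau})$ already constrains $F$ enough that (after an $O(r^{-\tau})$ correction) it genuinely preserves $\partial\mathbb H^n_+$, so that the reflected map $\widetilde F$ is well-defined and has the required regularity and decay. One way to see this: the boundary $\partial\mathbb H^n_+$ is characterized near infinity as the set where a particular static potential $V_{(n)}$ vanishes, and $F^*b - b$ small forces $F^*V_{(n)}$ to be a static potential up to lower order; controlling its zero set gives the claim. Alternatively, one argues directly with the boundary distance function. Once the map is honestly boundary-preserving, everything else is either the cited boundaryless lemma or bookkeeping with the reflection.
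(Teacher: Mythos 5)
The paper itself does not write out a proof of this lemma: it simply notes that the arguments for the corresponding boundaryless statements in \cite{CH,CN} adapt directly to the half-space model, and omits the details. Your route --- doubling across the totally geodesic hypersurface $\partial\mathbb{H}^n_+$, quoting the boundaryless rigidity-at-infinity statement as a black box, and then using that the reflected map commutes with the reflection $\sigma$ to force the limiting isometry $\widetilde{\mathcal I}$ to commute with $\sigma$ and hence preserve $\partial\mathbb{H}^n_+$ --- is therefore genuinely different in structure from what the authors intend. Step 2 of your plan is sound: $\sigma\widetilde{\mathcal I}\sigma$ and $\widetilde{\mathcal I}$ are isometries asymptotic to each other, hence equal, and since $F$ preserves the upper half, $\widetilde{\mathcal I}$ restricts to an isometry of $(\mathbb{H}^n_+,b)$ preserving the boundary. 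What the doubling buys is that one never has to reopen the Chru\'sciel--Herzlich/Chru\'sciel--Nagy analysis; what it costs is the seam, and that is where your write-up has a gap.

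The gap is in Step 1, and it is not where you place it. Boundary preservation is automatic: any diffeomorphism of the manifold-with-boundary $\mathbb{H}^n_+$ onto itself carries $\partial\mathbb{H}^n_+$ onto $\partial\mathbb{H}^n_+$ (invariance of the boundary), so the discussion of the zero set of $V_{(n)}$ or of the boundary distance function is unnecessary. The genuine obstacle is the regularity of the doubled map: with $\widetilde F=\sigma\circ F\circ\sigma$ on the lower half, $\widetilde F$ is continuous but is $C^1$ across $\partial\mathbb{H}^n_+$ only if $dF$ sends the normal direction into the normal direction along the boundary, which the hypothesis guarantees only up to an $O(r^{-\tau})$ tangential component. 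So in general $\widetilde F$ is merely Lipschitz, $\widetilde F^*b-b$ is defined only off the seam, and the boundaryless lemma --- stated for diffeomorphisms, with decay required also on derivatives of the error term --- does not apply verbatim. Your parenthetical remark that this can be fixed ``by composing with a boundary-respecting smoothing that is itself $O(r^{-\tau})$'' is precisely the step that must be carried out: one has to correct $F$ near the boundary so that $dF(\nu)$ is exactly normal along $\partial\mathbb{H}^n_+$ (using that its tangential part is $O(r^{-\tau})$ by the metric condition), check that the correction is $O(r^{-\tau})$ together with its first derivatives, and only then reflect. As written, the reduction is incomplete at exactly this point; the alternative, which is what the paper asserts, is to forgo doubling and run the argument of \cite{CH,CN} directly in the half-space.
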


\begin{proof}
	This is established by straightforwardly adapting the reasoning in the proofs of the corresponding results in 
	\cite{CH,CN}. Therefore, the argument is omitted.
\end{proof}

Suppose now that we have two diffeomorphisms, say $F_{1},F_2:\mathbb H^n_{+,r_0}\to M_{\rm ext}$, definining charts at infinity as above and consider $F=F^{-1}_1\circ F_2:\mathbb H^n_{+,r_0}\to \mathbb H^n_{+,r_0}$. It is clear that $F^*b=b+O(r^{-\tau})$, $\tau>n/2$, so by the previous lemma, $F=\mathcal I+O(r^{-\tau})$ for some isometry $\mathcal I$ preserving $\partial \mathbb H^n_+$. The next result establishes the geometric invariance of the mass functional appearing in Theorem \ref{finitemass}.

\begin{theorem}\label{geoinv}
	Under the conditions above, 
	\begin{equation}\label{geoinv2}
	\mathfrak m_{(g,b,F_1)}(V\circ\mathcal I)=\mathfrak m_{(g,b,F_2)}(V), \quad V\in\mathcal N_b^+.
	\end{equation}
	\end{theorem}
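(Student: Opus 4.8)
The plan is to reduce the statement to an exactness argument: I want to show that the difference of the two mass integrands, after pulling everything back to a single copy of $\mathbb H^n_+$ via $F=F_1^{-1}\circ F_2=\mathcal I+O(r^{-\tau})$, is the divergence of a quantity that decays fast enough to contribute nothing in the limit $r\to+\infty$. Concretely, writing $g_i=F_i^*g$ and $e_i=g_i-b$, one has $g_2=F^*g_1$, so $e_2=F^*g_1-b=\mathcal I^*(b+e_1)-b+O(r^{-2\tau})=\mathcal L_Y b+(\text{isometry part})+O(r^{-2\tau})$ where $Y$ is the $O(r^{-\tau})$ vector field generating the deviation of $F$ from the isometry $\mathcal I$; more precisely, after composing with $\mathcal I^{-1}$ one reduces to the case $\mathcal I=\mathrm{id}$, $F=\mathrm{id}+O(r^{-\tau})$, and then $e_2-e_1=\mathcal L_Y b+$ (higher order), with $Y=O(r^{-\tau})$, $\nabla_b Y=O(r^{-\tau})$ pointwise (using Lemma \ref{rigid} and its derivative estimate). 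The point of first handling $\mathcal I$ separately is that an exact isometry pulls $b$ back to $b$ and pulls the whole functional back by precomposition $V\mapsto V\circ\mathcal I$, which is exactly the reparametrization appearing on the left side of (\ref{geoinv2}); the $V\in\mathcal N_b^+$ condition is preserved because $\mathcal I$ preserves $\partial\mathbb H^n_+$ and $b$, hence preserves both equation (\ref{stateq}) and the Neumann condition $\partial V/\partial\eta=0$.

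First I would set up the bookkeeping: reduce to $\mathcal I=\mathrm{id}$ as above, so it suffices to prove $\mathfrak m_{(g,b,F_1)}(V)=\mathfrak m_{(g,b,F_2)}(V)$ when $F=\mathrm{id}+O(r^{-\tau})$. Second, I would expand $e_2=e_1+\mathcal L_Y b+Q$ where $Q$ collects terms quadratic in the small quantities, so $Q$, $\nabla_b Q$ are $O(r^{-2\tau})$; since $2\tau>n$, any such term integrated over $S^{n-1}_{r,+}$ (whose volume grows like $r^{n-1}$) and over $S^{n-2}_r$ (growing like $r^{n-2}$) tends to $0$ — here I use that $\mathbb U(V,\cdot)$ and $e(\eta,\vartheta)$ are linear in their tensor argument with coefficients that are $O(r)$ (from $V=O(r)$ and $\nabla_b V=O(r)$) and $O(1)$ respectively, so the contribution of $Q$ to the boundary integrals is $O(r\cdot r^{-2\tau}\cdot r^{n-1})+O(r\cdot r^{-2\tau}\cdot r^{n-2})=o(1)$. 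Third, and this is the heart of the matter, I would handle the linear-in-$Y$ term $\mathcal L_Y b$ using Lemma \ref{exact}: $\mathbb U(V,\mathcal L_Y b)=\mathrm{div}_b\,\mathbb V(V,Y,b)$ with $\mathbb V$ given by (\ref{exact2}), which is manifestly $O(r^{-\tau})$ (each term is a product of $V=O(r)$ or $\nabla_b V=O(r)$ with $Y=O(r^{-\tau})$ or $\nabla_b Y=O(r^{-\tau})$, giving $O(r^{1-\tau})$ — but wait, $1-\tau$ need not be negative; rather one should note $\mathbb V$ is a $2$-form whose divergence is integrated, and I must track the actual decay more carefully, which is the genuine technical point).

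The main obstacle, then, is the linear term: showing that $\int_{S^{n-1}_{r,+}}\langle\mathbb U(V,\mathcal L_Y b),\mu\rangle\,dS^{n-1}_{r,+}-\int_{S^{n-2}_r}V(\mathcal L_Y b)(\eta,\vartheta)\,dS^{n-2}_r\to 0$. The strategy is Stokes' theorem on the half-sphere $S^{n-1}_{r,+}$: since $\mathbb U(V,\mathcal L_Y b)=\mathrm{div}_b\,\mathbb V$, the surface integral $\int_{S^{n-1}_{r,+}}\langle\mathbb U(V,\mathcal L_Y b),\mu\rangle$ is (up to the codimension-one divergence theorem applied on the hypersurface, plus a transversal-derivative correction) controlled by the boundary term $\int_{S^{n-2}_r}\langle\mathbb V,\cdot\rangle$ over $S^{n-2}_r=\partial S^{n-1}_{r,+}$, which is the slice where $S^{n-1}_{r,+}$ meets $\partial\mathbb H^n_+$; one then checks that this boundary contribution exactly cancels the $\int_{S^{n-2}_r}V(\mathcal L_Y b)(\eta,\vartheta)$ term — this cancellation is forced by the Neumann condition $\partial V/\partial\eta=0$ together with the structure of $\mathbb V$ (note $\mathbb V_{ik}$ involves $V_i=\partial_i V$, and the components normal to $\partial\mathbb H^n_+$ are killed by the Neumann condition) — exactly as in the proof of Theorem \ref{finitemass}, where the same $\mathrm{div}_\beta(VX)$ boundary bookkeeping produced the corner term $Ve(\eta,\vartheta)$. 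Thus the whole construction is engineered so that the corner correction $-\int_{S^{n-2}_r}Ve(\eta,\vartheta)$ in (\ref{massdef}) is precisely what makes the exact term $\mathrm{div}_b\mathbb V$ integrate to zero over the half-sphere-with-boundary, and the residual is $O(r^{-\tau}\cdot r^{\,\cdot}\cdot r^{n-2})$ which vanishes since $\tau>n/2>n-1-(n-2)=1$ — the precise exponent count is the routine computation I would defer, but the qualitative point is that $\tau>n/2$ gives room to spare once the divergence structure has removed the leading $r^{n-1}$-order growth. I would finish by assembling the three contributions (isometry reparametrization, quadratic remainder $\to 0$, linear exact term $\to 0$) to conclude (\ref{geoinv2}).
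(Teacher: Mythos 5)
Your outline coincides with the paper's own proof: reduce to $\mathcal I=\mathrm{id}$ via Lemma \ref{rigid}, write $e_2-e_1=\mathcal{L}_\zeta b+O(r^{-2\tau})$, discard the quadratic remainder by the $\tau>n/2$ decay, convert the linear term into a corner integral over $S^{n-2}_r$ using the exactness Lemma \ref{exact} together with Stokes (the paper passes through the bulk region and the boundary disk, you propose Stokes directly on $S^{n-1}_{r,+}$; both are legitimate, and the ``transversal-derivative correction'' you worry about is absent precisely because $\mathbb V$ is antisymmetric), and finally cancel that corner contribution against the difference of the corner terms $-\int_{S^{n-2}_r}Ve_i(\eta,\vartheta)$. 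However, the step you defer as ``routine'' is exactly where the proof lives, and as you have set it up it does not close. First, you never use the fact that the deviation field $\zeta$ (your $Y$) is everywhere \emph{tangent} to $\partial\mathbb H^n_+$ — which holds because both charts send $\partial\mathbb H^n_{+,r_0}$ into $\Sigma$, so $F=F_1^{-1}\circ F_2$ preserves the boundary. In the corner integrand one has, from (\ref{exact2}), $\mathbb V_{n\alpha}=V(\zeta_{n;\alpha}-\zeta_{\alpha;n})+2(\zeta_\alpha V_n-\zeta_nV_\alpha)$; the Neumann condition (\ref{Vbd}) kills only the term $\zeta_\alpha V_n$, while the terms $V\zeta_{n;\alpha}$ and $\zeta_nV_\alpha$ are killed by $\zeta_n=0$ on the boundary (using also that $\partial\mathbb H^n_+$ is totally geodesic, so $\zeta_{n;\alpha}=0$ there). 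The surviving piece $-V\zeta_{\alpha;n}$ is \emph{not} small: it is what must cancel, exactly, against the linearized corner term $V(\mathcal L_\zeta b)(\eta,\vartheta)$, as in (\ref{lasti}) and the lines following it. Attributing the cancellation to the Neumann condition alone leaves uncancelled terms.

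Second, your fallback decay estimate is miscounted, so it cannot absorb any leftover. A corner term of size $O(r)\cdot O(r^{-\tau})$ integrated over $S^{n-2}_r$ (volume $O(r^{n-2})$) is $O(r^{\,n-1-\tau})$, which tends to zero only if $\tau>n-1$; the hypothesis $\tau>n/2$ gives no decay for $n\geq 4$ (nor in general for $n=3$), and your inequality ``$\tau>n/2>n-1-(n-2)=1$'' compares the wrong exponent. The same remark applies to the flux of $\mathbb U(V,\mathcal L_\zeta b)$ over $S^{n-1}_{r,+}$ itself, which is a priori $O(r^{\,n-\tau})$: there is no ``room to spare'' anywhere in the linear term, and the argument works only because the Stokes boundary contribution and the corner term in (\ref{massdef}) cancel identically, which in turn requires the tangency of $\zeta$ and the Neumann condition as above. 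With those two points supplied (and the harmless remainder estimates you already have), your argument becomes the paper's proof.
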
 

\begin{proof}
Again we proceed as in \cite{Mi}. 
We may assume that $\mathcal I$ is the identity, so that $F={\rm exp}\circ \zeta$, where $\zeta=O(r^{-\tau})$ is a vector field everywhere tangent to $\Sigma$. Also, since the mass is defined  asymptotically, we may assume that $M$ is diffeomorphic to $\mathbb H^n_+$, so that $M_{\rm ext}$ can be identified to $\mathbb H_{+,r_0}^n$. For $r>r_0$, let $\mathcal S^{n-1}_r=\{x\in\partial\mathbb H^n_{+};|x|\leq r\}$, so that $\mathcal S^{n-1}_{r}\cup S_{r,+}^{n-1}$ is the boundary of the compact region on $\mathbb H^n_+$ defined by $|x|\leq r$. Now set
\[
e_1=g_1-b,\quad g_1=F_1^*g,
\]
and
\[
e_2=F^*_2g-b=F^*g_1-b,
\]
so that 
\[
E:=e_2-e_1=F^*g_1-g_1=F^*(b+e_1)-(b+e_1)=\Lc_\zeta b+R_1,
\]
where $R_1=O(r^{-2\tau})$ is a remainder term. It follows that 
\[
\mathbb U(V,E)=\mathbb U(V,\Lc_\zeta b)+R_2,
\]
where $R_2=\mathbb U(V,R_1)=O(r^{-2\tau+1})$ vanishes after integration over $S^{n-1}_{r,+}$ as $r\to +\infty$. It follows that 
\begin{eqnarray*}
\lim_{r\to+\infty}\int_{S^{n-1}_{r,+}}\langle\mathbb{U}(V,E),\mu\rangle dS^{n-1}_{r,+} 
& = &\lim_{r\to+\infty}\int_{S^{n-1}_{r,+}}\langle\mathbb{U}(V,\Lc_\zeta b),\mu\rangle dS^{n-1}_{r,+}\\
& = & 
	-\lim_{r+\infty}\int_{\mathcal S^{n-1}_r}\langle\mathbb{U}(V,\Lc_\zeta b),\eta\rangle d\Sigma_s,
\end{eqnarray*}
where in the last step we used (\ref{exact1}) to transfer the integral to $\mathcal S^{n-1}_r$.
We shall compute the limit above  by means of (\ref{exact2}).
Using an orthonormal  $b$-frame $\{\mathfrak f_i\}_{i=1}^n$ so that   $\eta=-\mathfrak f_n=(0, 0,\cdots,-1)$ along the boundary, 
\[
\langle\mathbb{U}(V,\Lc_\zeta b),\eta\rangle=b^{jk}\mathbb V_{ij;k}\eta^i=-\mathbb V_{nk;k}=-\mathbb V_{n\alpha;\alpha}, \quad ({\rm sum}\,\,{\rm over}\,\,1\leq\alpha\leq n-1),
\]
which shows that 
\[
\langle\mathbb{U}(V,\Lc_\zeta b),\eta\rangle|_{\Sigma_r}={\rm div}_\beta (\eta\righthalfcup\mathbb V).
\]
Thus,
\[
\int_{\mathcal S^{n-1}_r}\langle\mathbb{U}(V,\Lc_\zeta b),\eta\rangle d\Sigma_r=\int_{ S^{n-2}_r}\mathbb V(\eta,\vartheta)d\Sigma_s=-\int_{S^{n-2}_{r}}\mathbb V_{n\alpha}\vartheta^\alpha dS^{n-2}_s.
\]
But 
\[
\mathbb V_{n\alpha}=V(\zeta_{n;\alpha}-\zeta_{\alpha;n})+2(\zeta_\alpha V_n-\zeta_nV_\alpha)=-V\zeta_{\alpha;n},
\]
where we used that $\zeta_{n;\alpha}=0$ and $V_n=0$ (due to (\ref{Vbd})) and $\zeta_n=0$ (since $\zeta$ is tangent to $\Sigma$). It follows that 
\begin{equation}\label{lasti}
\lim_{r\to +\infty}\int_{S^{n-1}_{r,+}}\langle\mathbb{U}(V,E),\,\mu\rangle dS^{n-1}_{r,+}=\lim_{r\to +\infty}\int_{S^{n-2}_{r}}V\zeta_{\alpha;n}\vartheta^\alpha dS^{n-2}_r.
\end{equation}  
To complete the argument, notice that 
\[
\left(-\int_{S^{n-2}_{r}}Ve_2(\eta,\vartheta)dS^{n-2}_{r}\right)-\left(-\int_{S^{n-2}_{r}}Ve_1(\eta,\vartheta)dS^{n-2}_{r}\right)=-\int_{S^{n-2}_{r}}VE(\eta,\vartheta)dS^{n-2}_{r},
\]
and this equals
\[
-\int_{S^{n-2}_{r}}V(\Lc_\zeta b)(\eta,\vartheta)dS^{n-2}_{r}-\int_{S^{n-2}_{r}}VR_1(\eta,\vartheta)dS^{n-2}_{r},
\]
where the last integral vanishes at infinity. Finally, the remaining integral may be evaluated as 
\begin{eqnarray*}
	-\int_{S^{n-2}_{r}}V(\Lc_\zeta b)(\eta,\vartheta)dS^{n-2}_{r}
	& = & -\int_{S^{n-2}_{r}}V(\zeta_{\alpha;n}+\zeta_{n;\alpha})\vartheta^\alpha dS^{n-2}_{r}\\
	& = & -\int_{S^{n-2}_{r}}V\zeta_{\alpha;n}\vartheta^\alpha dS^{n-2}_r,
\end{eqnarray*}
which clearly cancels out the contribution coming from the right-hand side of (\ref{lasti}). We have thus shown that 
\[
\lim_{r\to +\infty}\left[\int_{S^{n-1}_{r,+}}\langle\mathbb{U}(V,E),\mu\rangle dS^{n-1}_{r,+}-\int_{S^{n-2}_{r}}VE(\eta,\vartheta)dS^{n-2}_{r}\right]=0,
\]
which finishes the proof.
\end{proof}

\begin{remark}
	\label{massbterm}
	{\rm Recalling that  $\eta=-\mathfrak f_n$ we have    $e(\eta,\vartheta)=-e(\mathfrak f_n,\vartheta^\alpha\mathfrak f_\alpha)=-e_{\alpha n}\vartheta^\alpha$. 	
		Thus, the mass functional in (\ref{massdef}) may be expressed in terms of $e_{ik}=e(\mathfrak f_i,\mathfrak f_k)$ as 
	\begin{eqnarray}\label{massdef2}
\mathfrak m_{(g,b,F)}(V) & = & \lim_{r\to +\infty}\int_{S^{n-1}_{r,+}}
\left(V\left(e^k_{i;k}-e^k_{k;i}\right)-e_{ik}V^k+e^k_kV_i\right)\mu^i\, dS^{n-1}_{r,+}\nonumber\\
& & \quad +\lim_{r\to+\infty}
\int_{S^{n-2}_{r}}Ve_{\alpha n}\vartheta^\alpha dS^{n-2}_{r}.
\end{eqnarray}
In particular, if we express the metrics in Fermi coordinates around the boundary we get $e_{\alpha n}=0$, so in these new coordinates the last integral in the righthand side of (\ref{massdef2}) does not contribute to the mass. We will use this choice of asymptotic coordinates and the ensuing simplified expression for the mass in the proof of Theorem  \ref{maintheo} below. It is not hard to check that under this change of coordinates  (\ref{geoinv2}) holds true with $\mathcal I$ being the identity isometry, so the mass functional remains the same.
}
\end{remark}

To properly appreciate the relevance of Theorem \ref{geoinv}, we note that the isometry group $O^+(n-1,1)$ of the reference space $(\mathbb H^n_{+},b,\partial \mathbb H^n_{+})$, which is formed by those isometries of $\mathbb H^n$ preserving $\partial\mathbb H^n_+$, acts naturally on $\mathcal N_b^+$, which is generated by $\{V_{(0)}, V_{(1)},\cdots,V_{(n-1)}\}$,  in such a way  that the Lorentzian metric
\[
\langle\langle z,w\rangle\rangle=z_0w_0-z_1w_1-\cdots- z_{n-1}w_{n-1}
\]
is preserved. Here, we regard $\{V_{(a)}\}_{a=0}^{n-1}$ as an orthonormal basis and endow $\mathcal N_b^+$   
with a time orientation by declaring that $V_{(0)}$ is future directed. Thus, if for any admissible chart at infinity $F$ we set
\[
\mathcal P^{[F]}_a=\mathfrak m_{(g,b,F)}(V_{(a)}), \quad 0\leq a\leq n-1,
\]
then Theorem \ref{geoinv}
guarantees that  $\langle\langle\mathcal P^{[F]},\mathcal P^{[F]}\rangle\rangle$, the past/fut\-ure pointing nature and the causal character of $\mathcal P^{[F]}$ are {\em chart independent} indeed. Combined with the standard physical reasoning, this suggests the following conjecture.

\begin{conjecture}\label{conjmp}(Positive mass)
	Let $(M,g,\Sigma)$ be an asymptotically hyperbolic manifold with $R_g\geq -n(n-1)$ and $H_g\geq 0$. Then for  any admissible chart $F$ the vector $\mathcal P^{[F]}$ is time-like  future directed, unless it vanishes and $(M,g,\Sigma)$ is isometric to $(\mathbb H^n_{+},b,\partial \mathbb H^n_+)$.
	\end{conjecture}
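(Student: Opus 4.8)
\textbf{Proof strategy for Conjecture \ref{conjmp}.} This is stated as a conjecture because in full generality it presumably awaits a non-spin argument (cf. Remark \ref{remove}), but under the spin hypothesis it is precisely the content of Theorems \ref{maintheo} and \ref{conjmptheo} announced in the introduction, so here I describe the Witten-type argument one would carry out. The plan is to adapt Witten's spinorial proof of the positive mass theorem to the asymptotically hyperbolic setting with a non-compact boundary, following Chru\'sciel--Herzlich \cite{CH} for the behavior at the conformal infinity and \cite{ABdL} for the handling of the boundary $\Sigma$. First I would fix a chart at infinity and, using Remark \ref{massbterm}, express $g=b+e$ in Fermi coordinates around $\Sigma$ so that $e_{\alpha n}=0$ and the mass reduces to the single boundary integral over the hemispheres $S^{n-1}_{r,+}$. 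One then works with the imaginary Killing (Killing--Dirac) spinors $\psi_0$ on the model $(\mathbb H^n_+,b)$: the relevant ones are those satisfying $\nabla^b_X\psi_0 = \tfrac12 X\cdot\psi_0$ (suitably normalized) together with a boundary chirality condition along $\partial\mathbb H^n_+$ compatible with $\Pi_b=0$, and one checks that the asymptotic values of $V=\langle\psi_0,\psi_0\rangle$-type quadratic expressions recover exactly the static potentials in $\mathcal N_b^+$, so that the Witten-type boundary term at infinity is $c_n\,\mathfrak m_{(g,b,F)}(V)$ up to an explicit positive constant.

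The core step is the integration by parts (Lichnerowicz--Weitzenb\"ock) argument. One solves the boundary value problem for the modified Dirac operator $\mathfrak D = D + \tfrac{n}{2}(\,\cdot\,)$ on $(M,g)$: find a spinor $\psi$ with $\mathfrak D\psi=0$, asymptotic to the model Killing spinor $\psi_0$ at infinity, and satisfying along $\Sigma$ a local elliptic (chirality/MIT-bag-type) boundary condition adapted to $H_g\geq 0$. This requires the preparatory spinorial material on manifolds with boundary alluded to in Section \ref{spinorsbd}: self-adjointness of $\mathfrak D$ under that boundary condition, and the Fredholm/solvability statement, which in the asymptotically hyperbolic regime follows from a weighted Poincar\'e-type (coercivity) estimate once $\tau>n/2$ guarantees the asymptotic terms are integrable. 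Integrating the Weitzenb\"ock formula $\int_M \left(|\nabla\psi|^2 - |\mathfrak D\psi|^2 + \tfrac14(R_g+n(n-1))|\psi|^2\right) = (\text{boundary term at }\infty) + (\text{boundary term on }\Sigma)$, the interior integrand is nonnegative because $R_g\geq -n(n-1)$, the $\Sigma$-boundary term is nonnegative because $H_g\geq 0$ and the boundary condition is chosen precisely to make it $\geq \tfrac12\int_\Sigma H_g|\psi|^2$ (this is where the $e_{\alpha n}=0$ normalization and the corner term in Theorem \ref{finitemass} interact — one must verify the corner $S^{n-2}_r$ contributions cancel, exactly as they were arranged to in \eqref{massdef2}), and the term at infinity is the mass pairing. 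Polarizing over the $n$-dimensional family of model Killing spinors then yields that $\mathcal P^{[F]}$, as a vector in the Lorentzian space $(\mathcal N_b^+)^*$, satisfies $\langle\langle\mathcal P^{[F]},\mathcal P^{[F]}\rangle\rangle\geq 0$ with the correct time orientation, i.e. it is future-directed causal.

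For the rigidity case, suppose $\mathcal P^{[F]}=0$ (or more generally null). Then for every model Killing spinor $\psi_0$ the corresponding solution has $\nabla\psi\equiv 0$ on $M$ and $R_g=-n(n-1)$, so $M$ carries the maximal-dimensional space of parallel (imaginary Killing) spinors; a standard argument (Baum--Friedrich--Grunewald--Kath, as used in \cite{Wa,AD}) shows this forces $(M,g)$ to be isometric to $\mathbb H^n_+$ locally, and the boundary condition together with $H_g\geq 0$ forces $\Sigma$ to be totally geodesic, giving the global isometry $(M,g,\Sigma)=(\mathbb H^n_+,b,\partial\mathbb H^n_+)$. I expect the main obstacle to be the analytic setup on the manifold-with-non-compact-boundary: establishing the solvability of the Dirac boundary value problem with the chirality condition on $\Sigma$ \emph{and} the prescribed asymptotics at the non-compact end simultaneously, together with a careful bookkeeping of the interplay between the corner term $\int_{S^{n-2}_r}Ve(\eta,\vartheta)$ in the definition of the mass and the boundary term produced on $\Sigma$ by the Weitzenb\"ock integration — ensuring these are mutually consistent is the delicate point that distinguishes this argument from both the boundaryless case \cite{CH} and the asymptotically flat case with boundary \cite{ABdL}.
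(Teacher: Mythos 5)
Your strategy for the inequality part is essentially the paper's: transplant the imaginary Killing sections of the model (with chirality boundary conditions along $\partial\mathbb H^n_+$), solve the Killing--Dirac boundary value problem with the chirality condition on $\Sigma$ (Proposition \ref{existspin}), integrate the Lichnerowicz formula (Proposition \ref{streg}), identify the term at infinity with $\frac14\mathfrak m_{(g,b,F)}(V_\Phi)$ using the Fermi-coordinate normalization $e_{\alpha n}=0$ of Remark \ref{massbterm}, and conclude $\langle\langle\mathcal P^{[F]},V\rangle\rangle\geq 0$ for every $V$ in the future null cone via Proposition \ref{prelimlem}. Up to that point your proposal matches Theorems \ref{maintheo} and \ref{conjmptheo}.

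The genuine gap is in the rigidity step. If $\mathcal P^{[F]}$ fails to be time-like future directed, the Witten formula gives equality $\langle\langle\mathcal P^{[F]},V_\Phi\rangle\rangle=0$ only for \emph{some} null $V_\Phi$ (e.g. the null direction of $\mathcal P^{[F]}$ itself when it is null and nonzero), hence only \emph{one} nontrivial parallel section $\Psi_\Phi$ for $\widetilde\nabla^{\mathcal E,+}$. Your proposal jumps from this to ``for every model Killing spinor the corresponding solution has $\nabla\psi\equiv 0$,'' i.e.\ to the maximal-dimensional family, which does not follow from the Weitzenb\"ock identity alone. The paper bridges exactly this point: from the single parallel section one first deduces that $g$ is Einstein with ${\rm Ric}_g=-(n-1)g$ and, by differentiating the chirality condition $\mathcal Q\Psi=\pm\Psi$, that $\Sigma$ is totally geodesic; then the \emph{alternate} expression of the mass in terms of the modified Einstein tensor $\widehat G_g$ and the Newton tensor $J_g$ (Theorem \ref{asymhypl}, from \cite{dLGM}) forces $\mathcal P^{[F]}=0$ outright, so that equality holds for \emph{all} $V_\Phi$ and Corollary \ref{dimkill} yields the maximal space of Killing sections, whence $g$ is locally hyperbolic. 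Without this (or some substitute) your argument neither excludes a nonzero null $\mathcal P^{[F]}$ nor reaches the maximal Killing family needed for rigidity. The endgame is also more delicate than ``Baum--Friedrich-type classification'': the paper restricts the Killing sections to $\Sigma$ (via (\ref{conn0}) and Remark \ref{intrin}) to get a maximal family of imaginary Killing spinors on $\Sigma$, applies Baum's theorem to identify $(\Sigma,\gamma)$ with $(\mathbb H^{n-1},\beta)$, and then doubles $(M,g)$ along the totally geodesic $\Sigma$ to reduce to the boundaryless case; your sketch leaves these steps unaddressed.
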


The ``positive mass'' terminology is justified by the fact that whenever the conjecture holds true we may define the numerical invariant
\[
\mathfrak m_{(g,b)}=\sqrt{\langle\langle \mathcal P^{[F]},\mathcal P^{[F]}\rangle\rangle},
\]
which happens to be independent of the chosen chart. This may be regarded as the total mass of the isolated gravitational system whose (time-symmetric) initial data set is $(M,g)$. Notice that in this case we always have $\mathfrak m_{(g,b)}>0$ unless  $(M,b,\Sigma)$ is isometric to $(\mathbb H^n_+,b,\partial \mathbb H_+^n)$. As remarked in the Introduction, our main results (Theorems \ref{maintheo} and \ref{conjmptheo} below) confirm the conjecture in case $M$ is spin.

For further reference we observe the existence of an alternate version of the asymptotic definition for the mass functional  (\ref{massdef}) in terms of the Einstein tensor 
\[
G_g={\rm Ric}_g-\frac{R_g}{2}g
\] 
in the interior and the Newton tensor
\[
J_g=\Pi_g-H_gg,
\]
where $\Pi_g$ is the second fundamental form along the boundary,
which has been recently established in \cite{dLGM}.  More precisely, in terms of the modified Einstein tensor 
\[
\widehat G_g=G_g-\frac{(n-1)(n-2)}{2}g
\]
we have the following result.

\begin{theorem}\cite[Theorem 4.2]{dLGM}\label{asymhypl} For each  $a=0,1,\cdots,n-1$ there exists a conformal vector field $X_a$ on $\mathbb H^n_+$ which is everywhere tangent to $\partial\mathbb H^n_+$ and which for any asymptotically hyperbolic manifold $(M,g,\Sigma)$ and admissble chart $F$ satisfies
	\begin{equation}\label{asymhypl2}
	\mathfrak m_{(g,b,F)}(V_{(a)})=d_n\lim_{r\to+\infty}\left[\int_{\breve S^{n-1}_{r,+}}\widehat G_g(F_*X_a,\mu_g)d{\breve S^{n-1}_{r,+}}+\int_{\breve S^{n-2}_r}J_g(F_*X_a,\vartheta_g)d{\breve S^{n-2}_r} \right],
	\end{equation}
	where $d_n>0$ is a dimensional constant, $\mu_g$ the outward unit normal to $\breve S^{n-1}_{r,+}:= F(S^{n-1}_{r,+})\subset M$ and similarly for $\vartheta_g$.
\end{theorem}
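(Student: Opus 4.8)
\textbf{Proof proposal for Theorem \ref{asymhypl}.}

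The plan is to derive the alternate expression \eqref{asymhypl2} by relating the Einstein/Newton tensor side to the $\mathbb{U}$-functional side via a divergence identity, exactly as in the boundaryless argument of Herzlich \cite{He2} but keeping careful track of the corner terms coming from the non-compact boundary. First I would recall that for each static potential $V_{(a)}\in\mathcal{N}_b^+$ there is a naturally associated conformal vector field $X_a$ on $\mathbb{H}^n_+$; concretely, in the hyperboloid model $V_{(a)}=x_a|_{\mathbb{H}^n}$ and $X_a$ is the tangential projection to $\mathbb{H}^n$ of the corresponding constant (Killing) vector field of Minkowski space $\mathbb{R}^{1,n}$, which one checks is everywhere tangent to $\partial\mathbb{H}^n_+$ precisely because of \eqref{statpotbd}. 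The key algebraic fact is that the pair $(V_{(a)},X_a)$ satisfies a ``conformal Killing with potential'' relation $\nabla_b X_a = V_{(a)}\, b$ (equivalently $X_a=\nabla_b V_{(a)}$ up to normalization), so that the one-form $\mathbb{U}(V,e)$ of \eqref{charge} and the bulk integrand $\widehat G_g(X_a,\cdot)$ differ, modulo the Einstein constraint, by an exact form.

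Next I would set $e=g-b$ and expand $\widehat G_g$ around $b$: the modified Einstein tensor of the reference metric vanishes (that is the content of $(\mathbb{H}^n_+,b)$ being the correctly normalized model), so $\widehat G_g = \dot{G}_b e + O(r^{-2\tau})$, and the linearization $\dot G_b$ is, up to the usual Bianchi-type manipulations, the same operator that enters the scalar-curvature linearization used in the proof of Theorem \ref{finitemass}. Contracting with $F_* X_a$ and integrating over the half-annulus $A_{r,r'}$, the interior divergence theorem produces the flux of $\mathbb{U}(V_{(a)},e)$ over $S^{n-1}_{r',+}$ and $S^{n-1}_{r,+}$ plus a flux over $\Sigma_{r,r'}$; the quadratic remainders are $O(r^{-2\tau+1})$ and die at infinity by $\tau>n/2$, just as before. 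The boundary flux over $\Sigma_{r,r'}$ is where the Newton tensor $J_g$ enters: one uses the variation formula for the second fundamental form (the identity $2\dot H_b e = [d\,\mathrm{tr}_b e-\mathrm{div}_b e](\eta)-\mathrm{div}_\beta X-\langle\Pi_b,e\rangle_\beta$ already invoked in the proof of Theorem \ref{finitemass}, together with $\Pi_b=0$ and $\partial V_{(a)}/\partial\eta=0$) to rewrite the normal component of the bulk flux along $\Sigma$ as a tangential divergence of $J_g(F_*X_a,\cdot)$ plus a term $\mathrm{div}_\beta(\cdots)$. Applying the divergence theorem on $\Sigma_{r,r'}$ then converts this into the corner integrals over $S^{n-2}_r$ and $S^{n-2}_{r'}$; one corner piece reassembles, together with the $S^{n-1}_{r,+}$ flux, into exactly the combination $\int_{S^{n-1}_{r,+}}\langle\mathbb{U},\mu\rangle - \int_{S^{n-2}_r}V e(\eta,\vartheta)$ appearing in \eqref{massdef}, while the other corner piece is recognized as the $J_g(F_*X_a,\vartheta_g)$ integral claimed in \eqref{asymhypl2}. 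Matching constants fixes $d_n>0$, and the existence of the limit follows from Theorem \ref{finitemass} once the two expressions are identified.

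The main obstacle I anticipate is the bookkeeping of the corner contributions: one must verify that the tangential-divergence reorganizations on $\Sigma$ are legitimate (i.e.\ that $X_a$ being tangent to $\partial\mathbb{H}^n_+$ and the boundary conditions \eqref{Vbd} make all the unwanted boundary terms either exact along $\Sigma$ or of order $o(1)$), and that the geometric Einstein/Newton quantities $\widehat G_g$, $J_g$ evaluated with respect to $g$ (not $b$) differ from their linearizations by errors integrable against the $O(r)$ growth of $X_a$ — this requires the full strength of \eqref{asympthyp} including the second-derivative control. A secondary subtlety is checking that $\widehat G_b\equiv 0$ and $J_b\equiv 0$ with the stated normalization $\widehat G_g=G_g-\frac{(n-1)(n-2)}{2}g$, which pins down the dimensional constant; this is a direct computation on $(\mathbb{H}^n_+,b)$ using that $\mathrm{Ric}_b=-(n-1)b$ and $\Pi_b=0$. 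Since the scheme is an adaptation of \cite{He2} with the boundary terms handled exactly as in the proof of Theorem \ref{finitemass}, no genuinely new analytic input beyond Definition \ref{def:as:hyp} is needed, and the argument is quoted from \cite{dLGM} accordingly.
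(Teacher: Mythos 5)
Your sketch is essentially the route the paper relies on: the paper does not reprove this statement but quotes \cite[Theorem 4.2]{dLGM}, which (as the paper itself notes) is obtained by adapting Herzlich's Ricci-tensor argument \cite{He2} to the boundary setting, exactly as you outline — taking $X_a=\nabla_b V_{(a)}$ (conformal Killing, tangent to $\partial\mathbb H^n_+$ by (\ref{statpotbd})), using $\widehat G_b=0$, $J_b=0$ and the decay $\tau>n/2$ to reduce to the linearization, and reorganizing the $\Sigma$- and corner terms via the boundary conditions (\ref{Vbd}) into the combination defining (\ref{massdef}). So the proposal is correct in approach and consistent with the cited proof; no gap to flag beyond the fact that the detailed linearization/corner bookkeeping is, as you acknowledge, deferred to \cite{dLGM}.
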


\begin{remark}\label{remeins}
	{\rm In general we may write 
		\[
		\widehat G_g=\widetilde G_g+\frac{2-n}{2n}(R_g+n(n-1))g,
		\]
		where
		\[
		\widetilde G_g={\rm Ric}_g-\frac{R_g}{n}g
		\]
		is the traceless Ricci tensor. In particular, if $g$ is Einstein with ${\rm Ric}_g=-(n-1)g$, so that $R_g=-n(n-1)$, then $\widetilde G_g=0$ and hence $\widehat G_g=0$ as well.
	}
\end{remark}

\section{Spinors on manifolds with boundary}\label{spinorsbd}

In this section we review the results in the theory of spinors on  $n$-manifolds carrying a (possibly non-compact) boundary $\Sigma$ which are needed in the rest of the paper.  Our presentation uses the setup introduced in  \cite{HM,HMR} and we refer to these works for further details on the material presented here.

\subsection{The integral Lichnerowicz formula on spin manifolds with boundary}\label{lichbd}
We assume that the given manifold $(M,g)$ is spin and fix once and for all a spin structure on  $TM$. We denote by $\mathbb SM$ the associated spin bundle and by $\nabla$ both the Levi-Civita connection of $TM$
and its lift to $\mathbb SM$. 
Also,
$c:TM\times\mathbb SM\to\mathbb SM$ is the associated Clifford product, so that the corresponding Dirac operator is 
\[
D=\sum_{i=1}^nc(e_i)\nabla_{e_i},
\]
where $\{e_i\}_{i=1}^n$ is any orthonormal frame.
Sometimes, when we wish to emphasize the dependence of $c$ on $g$ we append a superscript and  write $c=c^g$ instead (and similarly for the other geometric invariants associated to  the given spin structure).

The need to consider chirality boundary conditions for spinors along the boundary leads us to implement a procedure  introduced in \cite{HMR} which allows us to treat the even and odd dimensional cases simultaneously. Given a spin manifold $M$ as above we set $\mathcal EM =\mathbb SM$ if $n$ is even and $\mathcal EM=\mathbb SM\oplus \mathbb SM$ if $n$ is odd. In this latter case, $\mathcal EM$ becomes a Dirac bundle if we define, for a section
\[
\Psi=\left(
\begin{array}{c}
\psi_1\\
\psi_2 
\end{array}
\right)\in \Gamma(\mathcal EM),\quad \psi_i\in \Gamma(\mathbb SM), 
\]  
the Clifford product and connection by
\[
c^{\mathcal E}(X)\Psi=\left(
\begin{array}{c}
c(X)\psi_1\\
-c(X)\psi_2 
\end{array}
\right),\quad \nabla_X^{\mathcal E}\Psi=\left(
\begin{array}{c}
\nabla_X\psi_1\\
\nabla_X\psi_2 
\end{array}
\right), \quad X\in\Gamma(TM).
\]  
As usual, the corresponding Dirac operator is 
\[
D^{\mathcal E}\Psi=\sum_{i=1}^nc^{\mathcal E}(e_i)\nabla^{\mathcal E}_{e_i}\Psi=\left(
\begin{array}{c}
D\psi_1\\
-D\psi_2 
\end{array}
\right).
\]
Finally, in order to unify the notation we set $c^{\mathcal E}=c$, $\nabla^{\mathcal  E}=\nabla$ and $D^{\mathcal E}=D$ if $n$ is even whenever convenient. 

We now define the {\em Killing connections} on  $\mathcal EM$ by
\[
\widetilde\nabla_X^{\mathcal E,\pm}=\nabla^{\mathcal E}_X\pm\frac{\bf i}{2}c^{\mathcal E}(X).
\]
The corresponding {\em Killing-Dirac operators} are defined in the usual way, namely,
\[
\widetilde D^{\mathcal E,\pm}=\sum_{i=1}^nc^{\mathcal E}(e_i)\widetilde\nabla_{e_i}^{\mathcal E,\pm},
\]
so that
\[
\widetilde D^{\mathcal E,\pm}=D^{\mathcal E}\mp\frac{n{\bf i}}{2}.
\]
We remind that $\widetilde D^{\mathcal E,\pm}$ satisfies the fundamental Lichnerowicz formula:
\begin{equation}
\label{lichnefund}
({\widetilde D^{\mathcal E,\pm}})^2=({{{\widetilde\nabla}^{\mathcal E,\pm}}})^*
\widetilde\nabla^{\mathcal E,\pm}+\frac{R_g+n(n-1)}{4}.
\end{equation}

Given  $\Psi\in\Gamma(\mathcal EM)$ we  set
\[
\widetilde\Theta^{\pm}_\Psi(X)=-\langle \widetilde{\mathcal W}^{\mathcal E,\pm}(X)\Psi,\Psi\rangle, \quad X\in \Gamma(TM),
\]
where 
\begin{equation}\label{bdterm}
\widetilde{\mathcal W}^{\mathcal E,\pm}(X)=-(\widetilde\nabla_X^{\mathcal E,\pm}+c^{\mathcal E}(X) \widetilde D^{\mathcal E,\pm}).
\end{equation}
Using (\ref{lichnefund}) we easily compute that
\[
{\rm div}_g\widetilde\Theta^\pm_\Psi=|\widetilde\nabla^{\mathcal E,\pm} \Psi|^2-|\widetilde D^{\mathcal E,\pm}\Psi|^2+\frac{R_g+n(n-1)}{4}|\Psi|^2,
\]
so
if 
$\Omega\subset M$ is a compact domain with a nonempty boundary $\partial\Omega$, which we assume endowed with its inward pointing unit normal $\nu$, then integration by parts yields the integral version of the fundamental Lichnerowicz formula, namely, 
\begin{equation}\label{partshyp}
\int_\Omega\left(|\widetilde\nabla^{\mathcal E,\pm} \Psi|^2-|\widetilde D^{\mathcal E,\pm}\Psi|^2+\frac{R_g+n(n-1)}{4}|\Psi|^2\right)dM={\rm Re}\int_{\partial\Omega}
\left\langle {\widetilde{\mathcal W}}^{\mathcal E,\pm}(\nu)\Psi,\Psi\right\rangle d\Sigma.
\end{equation}

A key step in our argument is to rewrite the right-hand side of (\ref{partshyp}) along the portion of $\partial \Omega$ lying on $\Sigma$ in terms of the corresponding shape operator. Indeed, notice that 
$\Sigma$ carries the bundle $\mathcal EM|_\Sigma$, obtained by restricting  $\mathcal EM$ to $\Sigma$. This becomes a Dirac bundle if endowed with the Clifford product 
$$
c^{\mathcal E,\intercal}(X)\Psi=c^{\mathcal E}(X)c^{\mathcal E}(\nu) \Psi,
$$
and the connection 
\begin{equation}\label{conn0}
\nabla^{\mathcal E,\intercal}_X\Psi  =  \nabla^{\mathcal E}_X\Psi-\frac{1}{2}c^{\mathcal E,\intercal}(AX)\Psi,
\end{equation}
where $A=-\nabla\nu$ is the shape operator of $\Sigma$, so
the corresponding Dirac operator $D^{\mathcal E,\intercal}:\Gamma(\mathcal EM|_\Sigma)\to\Gamma(\mathcal EM|_\Sigma)$ is
$$
D^{\mathcal E,\intercal}\Psi=\sum_{j=1}^{n-1}c^{\mathcal E,\intercal}(f_j)\nabla^{\mathcal E,\intercal}_{f_j}\Psi,
$$
where $\{f_j\}_{j=1}^{n-1}$ is a local orthonormal tangent frame along $\Sigma$.
Choosing the frame so that $Af_j=\kappa_jf_j$, where $\kappa_j$ are the principal curvatures of $\Sigma$, we have
\[
D^{\mathcal E,\intercal}\Psi=-c^{\mathcal E}(\nu)\Dc\Psi+\frac{H}{2}\Psi,
\]
where $H=\kappa_1+\cdots+\kappa_{n-1}$ is the mean curvature and
\[
\Dc\Psi=
\sum_{j=1}^{n-1}c^{\mathcal E}(f_j)\nabla^{\mathcal E}_{f_j}\Psi=c^{\mathcal E}(\nu)\left(D^{\mathcal E,\intercal}\Psi-\frac{H}{2}\Psi\right).
\]
Since $\Dc=D^{\mathcal E}-c^{\mathcal E}(\nu)\nabla^{\mathcal E}_{\nu}$, we obtain
\[
D^{\mathcal E,\intercal}\Psi=\frac{H}{2}\Psi-(\nabla^{\mathcal E}_\nu+c^{\mathcal E}(\nu)D^{\mathcal E})\Psi,
\]
which combined with (\ref{partshyp}) yields the following important result.

\begin{proposition}\label{intpartf} Under the conditions above,
\begin{eqnarray}\label{parts3}
\int_\Omega\left(|\widetilde\nabla^{\mathcal E,\pm} \Psi|^2-|\widetilde D^{\mathcal E,\pm}\Psi|^2+\frac{R_g+n(n-1)}{4}|\Psi|^2\right)d\Omega
& = &\int_{\partial\Omega\cap\Sigma}
\left(\langle \widetilde D^{\mathcal E,\intercal,\pm}\Psi,\Psi\rangle-\frac{H}{2}|\Psi|^2\right) d\Sigma\nonumber \\
& & \quad + {\rm Re}\int_{\partial\Omega\cap {\rm int}\,M}
\left\langle {\widetilde{\mathcal W}}^{\mathcal E,\pm}(\nu)\Psi,\Psi\right\rangle d\partial \Omega,
\end{eqnarray}
where
\begin{equation}\label{newdirac}
\widetilde D^{\mathcal E,\intercal,\pm}=D^{\mathcal E,\intercal}\pm \frac{n-1}{2}{\bf i}c^{\mathcal E}(\nu):\Gamma(\mathbb S{\Sigma})\to
\Gamma(\mathbb S{\Sigma}).
\end{equation}
\end{proposition}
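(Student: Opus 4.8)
The plan is to start from the integral Lichnerowicz formula (\ref{partshyp}) and rewrite the boundary integrand along $\partial\Omega\cap\Sigma$ using the tangential Dirac-type operator on $\Sigma$. First I would split $\partial\Omega$ into its portion lying on $\Sigma$ and its portion lying in the interior of $M$; over the latter no further work is needed, so the entire task reduces to re-expressing $\mathrm{Re}\,\langle\widetilde{\mathcal W}^{\mathcal E,\pm}(\nu)\Psi,\Psi\rangle$ along $\Sigma$. Recalling the definition (\ref{bdterm}), $\widetilde{\mathcal W}^{\mathcal E,\pm}(\nu)=-(\widetilde\nabla^{\mathcal E,\pm}_\nu+c^{\mathcal E}(\nu)\widetilde D^{\mathcal E,\pm})$, and expanding the Killing connection and Killing-Dirac operator via $\widetilde\nabla^{\mathcal E,\pm}_X=\nabla^{\mathcal E}_X\pm\frac{\mathbf i}{2}c^{\mathcal E}(X)$ and $\widetilde D^{\mathcal E,\pm}=D^{\mathcal E}\mp\frac{n\mathbf i}{2}$, the $\pm$-corrections collect into a term proportional to $c^{\mathcal E}(\nu)$, while the main part is $-(\nabla^{\mathcal E}_\nu+c^{\mathcal E}(\nu)D^{\mathcal E})\Psi$. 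This is precisely the combination already computed above to equal $D^{\mathcal E,\intercal}\Psi-\frac H2\Psi$, so substituting gives the interior Dirac operator on $\Sigma$ plus a mean-curvature term plus the $\pm$-correction.

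Next I would track the $\pm$-correction carefully. Writing $\widetilde\nabla^{\mathcal E,\pm}_\nu\Psi=\nabla^{\mathcal E}_\nu\Psi\pm\frac{\mathbf i}{2}c^{\mathcal E}(\nu)\Psi$ and $c^{\mathcal E}(\nu)\widetilde D^{\mathcal E,\pm}\Psi=c^{\mathcal E}(\nu)D^{\mathcal E}\Psi\mp\frac{n\mathbf i}{2}c^{\mathcal E}(\nu)\Psi$, the two $\pm$-terms add to $\mp\frac{(n-1)\mathbf i}{2}c^{\mathcal E}(\nu)\Psi$ after the overall sign in (\ref{bdterm}); combining with $D^{\mathcal E,\intercal}\Psi-\frac H2\Psi$ yields exactly $\widetilde D^{\mathcal E,\intercal,\pm}\Psi-\frac H2\Psi$ with $\widetilde D^{\mathcal E,\intercal,\pm}$ as defined in (\ref{newdirac}). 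Taking the real part of $\langle\,\cdot\,,\Psi\rangle$ and noting that $\langle\frac H2\Psi,\Psi\rangle=\frac H2|\Psi|^2$ is already real, one arrives at the claimed boundary integrand $\langle\widetilde D^{\mathcal E,\intercal,\pm}\Psi,\Psi\rangle-\frac H2|\Psi|^2$ over $\partial\Omega\cap\Sigma$. (One should either verify that this term is real or leave the $\mathrm{Re}$ implicit; since $D^{\mathcal E,\intercal}$ is formally self-adjoint on $\Sigma$ and $\mathbf i c^{\mathcal E}(\nu)$ is skew-Hermitian composed with $\mathbf i$, the operator $\widetilde D^{\mathcal E,\intercal,\pm}$ is formally self-adjoint, so pointwise the integrand need not be real but its integral is — it is cleanest to keep the convention that $\langle\cdot,\cdot\rangle$ already denotes the real part here, as in (\ref{partshyp}).)

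The only genuinely delicate point is bookkeeping: matching the inward-pointing normal convention $\nu$ used in (\ref{partshyp}) with the shape operator $A=-\nabla\nu$ appearing in (\ref{conn0}), and making sure the signs of the $\mp\frac{n\mathbf i}{2}$ terms propagate consistently so that the coefficient $\pm\frac{n-1}{2}$ in (\ref{newdirac}) comes out with the correct sign. Everything else is a direct substitution of the identities $\Dc=D^{\mathcal E}-c^{\mathcal E}(\nu)\nabla^{\mathcal E}_\nu$ and $D^{\mathcal E,\intercal}\Psi=-c^{\mathcal E}(\nu)\Dc\Psi+\frac H2\Psi$ already established before the statement, so no new computation is required beyond organizing these terms.
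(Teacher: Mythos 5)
Your proposal is correct and follows essentially the same route as the paper: starting from the integral Lichnerowicz formula (\ref{partshyp}), splitting $\partial\Omega$, and substituting along $\Sigma$ the identity $D^{\mathcal E,\intercal}\Psi=\frac{H}{2}\Psi-(\nabla^{\mathcal E}_\nu+c^{\mathcal E}(\nu)D^{\mathcal E})\Psi$ established just before the statement, with the $\pm\frac{\mathbf i}{2}$ and $\mp\frac{n\mathbf i}{2}$ corrections collecting into $\pm\frac{n-1}{2}\mathbf i\, c^{\mathcal E}(\nu)$ exactly as in (\ref{newdirac}). Your sign bookkeeping is accurate (the paper leaves it implicit), so nothing further is needed.
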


\begin{remark}\label{intrin}
	{\rm It turns out that the extrinsic Dirac bundle $(\mathcal EM|_{\Sigma},c^{\mathcal E,\intercal}, \nabla^{\mathcal E,\intercal})$ can be naturally identified with certain Dirac bundles constructed out of the {\em intrinsic} induced spin bundle $(\mathbb S\Sigma,c^\gamma,\nabla^\gamma)$, where $\gamma=g|_\Sigma$ is the induced metric along $\Sigma$. Thus, 
	\[
	(\mathcal EM|_{\Sigma},c^{\mathcal E,\intercal}, \nabla^{\mathcal E,\intercal})\cong
	\left\{
	\begin{array}{ll}
	(\mathbb S\Sigma\oplus\mathbb S\Sigma,c^\gamma\oplus -c^\gamma,\nabla^\gamma\oplus\nabla^\gamma) &  n\,\,{\rm even}\\
	(\mathbb S\Sigma\oplus\mathbb S\Sigma,c^\gamma\oplus c^\gamma,\nabla^\gamma\oplus\nabla^\gamma) &  n\,\,{\rm odd}.
	\end{array}
		\right.
	\]
Similar remarks hold for the corresponding Dirac operators. 
We refer to \cite[Subsection 2.2]{HMR} for a detailed discussion of these issues. 
}
	\end{remark}

\subsection{Chirality boundary conditions}\label{chirbd}
To further simplify (\ref{parts3}) we must impose suitable boundary conditions on $\Psi$.

\begin{definition}\label{chiralop}
	A {\rm chirality operator}	on a spin manifold $(M,g)$ is a (pointwise) selfadjoint involution $Q:\Gamma(\mathcal EM)\to\Gamma(\mathcal EM)$ which is parallel and anti-commutes with Clifford multiplication by tangent vectors.
\end{definition}

If $n$ is even it is well-known that  Clifford multiplication by the complex volume element
provides a chirality operator. Using the formalism above, we easily see that in case $n$ is odd 
$Q:\Gamma(\mathcal EM)\to \Gamma(\mathcal EM)$,
\[
Q\left(
\begin{array}{c}
\psi_1\\
\psi_2
\end{array}
\right)=\left(
\begin{array}{c}
\psi_2\\
\psi_1
\end{array}
\right)
\]
also defines a chirality operator.	
Thus, in either case we define the corresponding {\em boundary chirality operator} $\mathcal Q=Qc^{\mathcal E}(\nu):\Gamma(\mathcal EM|_{\Sigma})\to\Gamma(\mathcal EM|_{\Sigma})$. Clearly, $\mathcal Q$ is a self-adjoint involution as well, so we may consider the projections
\[
P^{(\pm)}_{\mathcal Q}=\frac{1}{2}\left({\rm Id}_{\mathcal EM|_{\Sigma}}\pm\mathcal Q\right):\Gamma(\mathcal EM|_\Sigma)\to\Gamma(V^{\mathcal E,(\pm)})
\] 
onto the $\pm 1$-eigenbundles $V^{\mathcal E,(\pm)}$ of $\mathcal Q$. Thus, $\Psi\in\Gamma(V^{\mathcal E,(\pm)})$ if and only if $\mathcal Q\Psi=\pm\Psi$. 
In the following we use the qualification {\em standard} to refer to any of these chilarity structures on a spin manifold $M$.

\begin{remark}
	\label{remodd}
{\rm If $n$ is odd then 
$\Psi\in \Gamma(V^{\mathcal E,(\pm)})$ if and only if 
\begin{equation}\label{rep1}
\Psi=\left(
\begin{array}{c}
\psi\\
\pm c (\nu)\psi
\end{array}
\right),\quad \psi\in \Gamma(\mathbb SM|_\Sigma).
\end{equation}}
\end{remark}

For any $\Psi\in\Gamma(\mathcal EM)$ we set $\Psi^{(\pm)}=P^{(\pm)}_{\mathcal Q}\Psi\in\Gamma(V^{\mathcal E,(\pm)})$, so that 
\[
\Psi=\Psi^{(+)}+\Psi^{(-)},
\]
an orthogonal decomposition. Since $D^{\mathcal E,\intercal} c^{\mathcal E}(\nu)=-c^{\mathcal E}(\nu)D^{\mathcal E,\intercal}$ we have
$D^{\mathcal E,\intercal} P^{(\pm)}_{\mathcal Q}=P^{(\mp)}_{\mathcal Q}D^{\mathcal E,\intercal}$, 
and hence
\begin{equation}\label{idpm}
\langle D^{\mathcal E,\intercal} \Psi,\Psi\rangle=\langle D^{\mathcal E,\intercal} \Psi^{(+)},\Psi^{(-)}\rangle+\langle D^{\mathcal E,\intercal} \Psi^{(-)},\Psi^{(+)}\rangle.
\end{equation}

\begin{definition}\label{chirality}
	If $\mathcal EM$ is endowed with the standard chirality operator $Q$ as above then we say that $\Psi\in\Gamma(\mathcal EM)$ satisfies a {\rm chirality boundary condition} if any of the identities
	$\Psi^{(\pm)}=0$ holds along $\Sigma$.
	\end{definition}

\begin{proposition}\label{streg}
	If $\Psi$ satisfies a chirality boundary condition then
	\begin{equation}\label{vanishing0}
	\langle c^{\mathcal E}(\nu)\Psi,\Psi\rangle=0,
	\end{equation}
	and
	\begin{eqnarray}\label{parts4}
	{\rm Re}\int_{\partial\Omega\cap {\rm int}\,M}
	\left\langle {\widetilde{\mathcal W}}^{\mathcal E,\pm}(\nu)\Psi,\Psi\right\rangle d\partial\Omega
	& = & 
	\int_\Omega\left(|\widetilde\nabla^{\mathcal E,\pm} \Psi|^2-|\widetilde D^{\mathcal E,\pm}\Psi|^2+\frac{R_g+n(n-1)}{4}|\Psi|^2\right)d\Omega\nonumber\\
	&  &\quad +\int_{\partial\Omega\cap\Sigma}
	\frac{H}{2}|\Psi|^2 d\Sigma.
	\end{eqnarray} 
\end{proposition}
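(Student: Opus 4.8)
The plan is to show that the boundary term $\widetilde D^{\mathcal E,\intercal,\pm}$ contributes nothing to the right-hand side of \eqref{parts3} once a chirality boundary condition is imposed, and to peel off the identity \eqref{vanishing0} as an immediate byproduct of the definition of the boundary chirality operator $\mathcal Q$. First I would establish \eqref{vanishing0}: since $\mathcal Q = Qc^{\mathcal E}(\nu)$ is a self-adjoint involution and $Q$ is parallel, a short computation shows that $c^{\mathcal E}(\nu)$ anticommutes with $\mathcal Q$ (because $Q$ anticommutes with Clifford multiplication and $c^{\mathcal E}(\nu)^2 = -{\rm Id}$), hence $c^{\mathcal E}(\nu)$ interchanges the eigenbundles $V^{\mathcal E,(+)}$ and $V^{\mathcal E,(-)}$. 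If $\Psi$ satisfies a chirality boundary condition, say $\Psi^{(-)} = 0$ so $\Psi = \Psi^{(+)}$, then $c^{\mathcal E}(\nu)\Psi \in \Gamma(V^{\mathcal E,(-)})$, and since $V^{\mathcal E,(+)} \perp V^{\mathcal E,(-)}$ pointwise we get $\langle c^{\mathcal E}(\nu)\Psi,\Psi\rangle = 0$; the case $\Psi^{(+)}=0$ is identical.

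Next I would handle the Dirac boundary term. Write $\widetilde D^{\mathcal E,\intercal,\pm}\Psi = D^{\mathcal E,\intercal}\Psi \pm \frac{n-1}{2}{\bf i}\,c^{\mathcal E}(\nu)\Psi$ from \eqref{newdirac}. For the zeroth-order piece, $\langle \pm\frac{n-1}{2}{\bf i}\,c^{\mathcal E}(\nu)\Psi,\Psi\rangle$ is purely imaginary up to the factor already accounted for — more precisely, taking real parts and using \eqref{vanishing0} it vanishes outright. For the genuine Dirac piece, I would invoke \eqref{idpm}: $\langle D^{\mathcal E,\intercal}\Psi,\Psi\rangle = \langle D^{\mathcal E,\intercal}\Psi^{(+)},\Psi^{(-)}\rangle + \langle D^{\mathcal E,\intercal}\Psi^{(-)},\Psi^{(+)}\rangle$, and each summand contains a factor $\Psi^{(\mp)}$ which is zero under the chirality boundary condition. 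Hence $\int_{\partial\Omega\cap\Sigma}\langle \widetilde D^{\mathcal E,\intercal,\pm}\Psi,\Psi\rangle\,d\Sigma = 0$, and the only surviving boundary contribution on $\partial\Omega\cap\Sigma$ is $-\int_{\partial\Omega\cap\Sigma}\frac{H}{2}|\Psi|^2\,d\Sigma$. Rearranging \eqref{parts3} — moving this term to the right-hand side and isolating the interior boundary integral on the left — yields exactly \eqref{parts4}.

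The main obstacle, such as it is, lies in the bookkeeping around the real parts and the anticommutation relations: one must be careful that $\widetilde D^{\mathcal E,\intercal,\pm}$ (unlike $D^{\mathcal E,\intercal}$) is not formally self-adjoint because of the ${\bf i}c^{\mathcal E}(\nu)$ summand, so the cancellation in \eqref{idpm} applies only to the $D^{\mathcal E,\intercal}$ part and the extra term has to be killed separately via \eqref{vanishing0}. It is also worth checking that $D^{\mathcal E,\intercal}$ really does anticommute with $c^{\mathcal E}(\nu)$ — this follows from \eqref{conn0} together with the fact that $c^{\mathcal E}(\nu)$ is $\nabla^{\mathcal E,\intercal}$-parallel along $\Sigma$ and anticommutes with $c^{\mathcal E,\intercal}(f_j) = c^{\mathcal E}(f_j)c^{\mathcal E}(\nu)$ — but this is standard and can be cited from \cite{HMR}. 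Beyond that, the argument is a direct rearrangement of \eqref{parts3} and should be short.
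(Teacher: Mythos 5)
Your proposal is correct and follows essentially the same route as the paper: \eqref{vanishing0} comes from the interplay of $\mathcal Q$ with $c^{\mathcal E}(\nu)$ (your eigenbundle-swapping argument is just a repackaging of the paper's direct computation using self-adjointness of $Q$ and skew-adjointness of $c^{\mathcal E}(\nu)$), the $D^{\mathcal E,\intercal}$ contribution is killed via \eqref{idpm}, the zeroth-order ${\bf i}\,c^{\mathcal E}(\nu)$ term via \eqref{vanishing0}, and \eqref{parts4} is then a rearrangement of \eqref{parts3}. No gaps.
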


\begin{proof}
If $\mathcal Q\Psi=\pm\Psi$ we have 
	\begin{eqnarray*}
		\langle c^{\mathcal E}(\nu)\Psi,\Psi\rangle
		& = & \pm \langle c^{\mathcal E}(\nu)Qc^{\mathcal E}(\nu)\Psi,\Psi\rangle\\
		& = & \mp \langle Qc^{\mathcal E}(\nu)\Psi,c^{\mathcal E}(\nu)\Psi\rangle\\
		& = & \mp\langle c^{\mathcal E}(\nu)\Psi,Qc^{\mathcal E}(\nu)\Psi\rangle\\
		& = & - \langle c^{\mathcal E}(\nu)\Psi,\Psi\rangle,
	\end{eqnarray*} 
which proves (\ref{vanishing0}). On the other hand, from (\ref{idpm}) we have  $\langle D^{\mathcal E,\intercal}\Psi,\Psi\rangle=0$. Thus, from (\ref{newdirac}) we get $\langle \widetilde D^{\mathcal E,\intercal,\pm}\Psi,\Psi\rangle=0$, which together with (\ref{parts3}) proves (\ref{parts4}).
\end{proof}

Finally, we remark that the projections $P_{\mathcal Q}^{(\pm)}$ define nice elliptic boundary conditions for the Dirac operador $\widetilde D^{\mathcal E,+}$ considered above, as the following result shows. 

\begin{proposition}\label{existspin}
	If $(M,g,\Sigma)$ is asymptotically hyperbolic as above with $R_g\geq -n(n-1)$ and $H_g\geq 0$ then 
	for any $\Phi\in \Gamma(\mathcal EM)$ such that $\widetilde\nabla^{\mathcal E,+}\Phi\in L^2(\mathcal EM)$ there exists a unique $\Xi\in L_1^2(\mathcal EM)$ solving the boundary value problem
	\[
	\left\{
	\begin{array}{lcc}
	{\widetilde D}^{\mathcal E,+}\Xi=-\widetilde D^{\mathcal E,+}\Phi & {\rm in} & M \\
	\Xi^{(\pm)}=0 & {\rm on}  & \Sigma 
	\end{array} 
	\right.
	\]
\end{proposition}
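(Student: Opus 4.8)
The plan is to solve the boundary value problem via a coercivity estimate for the modified Dirac operator $\widetilde D^{\mathcal E,+}$ under the chirality boundary condition, combined with the ellipticity of this boundary value problem and a standard Lax–Milgram / functional-analytic argument. Since the prescribed boundary data is the homogeneous one ($\Xi^{(\pm)}=0$), I would look for $\Xi$ in the space
\[
\mathcal H=\{\Xi\in L^2_1(\mathcal EM)\,:\,\Xi^{(\pm)}=0\text{ on }\Sigma\},
\]
and reduce the problem to finding $\Xi\in\mathcal H$ with $\widetilde D^{\mathcal E,+}\Xi=\Theta$, where $\Theta:=-\widetilde D^{\mathcal E,+}\Phi\in L^2(\mathcal EM)$ (note $\Theta$ lies in $L^2$ because $\widetilde\nabla^{\mathcal E,+}\Phi\in L^2$ and $\widetilde D^{\mathcal E,+}\Phi=\sum_i c^{\mathcal E}(e_i)\widetilde\nabla^{\mathcal E,+}_{e_i}\Phi$ is a pointwise contraction of $\widetilde\nabla^{\mathcal E,+}\Phi$).

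\textbf{Key steps.} First I would establish the fundamental coercivity inequality: for all $\Psi\in\mathcal H$ (with suitable decay at infinity, obtained first for compactly supported $\Psi$ and then by density),
\[
\|\widetilde\nabla^{\mathcal E,+}\Psi\|^2_{L^2(M)}\leq \|\widetilde D^{\mathcal E,+}\Psi\|^2_{L^2(M)}.
\]
This is exactly what Proposition \ref{streg} gives: applying (\ref{parts4}) on an exhausting family of compact domains $\Omega$ whose interior boundary pieces $\partial\Omega\cap{\rm int}\,M$ are large coordinate hemispheres $S^{n-1}_{r,+}$, the boundary term ${\rm Re}\int_{\partial\Omega\cap{\rm int}\,M}\langle\widetilde{\mathcal W}^{\mathcal E,+}(\nu)\Psi,\Psi\rangle$ tends to zero as $r\to\infty$ (here one uses the asymptotically hyperbolic structure, the fact that $\Psi$ is an $L^2_1$ limit of compactly supported spinors, and the explicit form of $\widetilde{\mathcal W}^{\mathcal E,+}$), and since $R_g\geq -n(n-1)$ and $H_g\geq 0$ the two nonnegative terms $\tfrac{R_g+n(n-1)}{4}|\Psi|^2$ and $\tfrac{H}{2}|\Psi|^2$ drop out in the favorable direction, yielding the inequality. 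From the Killing connection one has the Poincaré-type bound $\|\Psi\|_{L^2}\lesssim\|\widetilde\nabla^{\mathcal E,+}\Psi\|_{L^2}$ (the $\tfrac{\mathbf i}{2}c^{\mathcal E}(X)$ term makes $\widetilde\nabla^{\mathcal E,+}$ coercive over $L^2$ on an asymptotically hyperbolic manifold), so in fact $\|\Psi\|_{L^2_1}\lesssim\|\widetilde D^{\mathcal E,+}\Psi\|_{L^2}$ on $\mathcal H$.

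\textbf{Conclusion.} Given the coercivity estimate, the bilinear form $(\Psi_1,\Psi_2)\mapsto\langle\widetilde D^{\mathcal E,+}\Psi_1,\widetilde D^{\mathcal E,+}\Psi_2\rangle_{L^2}$ is a continuous, coercive inner product on $\mathcal H$; by Riesz representation there is a unique $\Upsilon\in\mathcal H$ with $\langle\widetilde D^{\mathcal E,+}\Upsilon,\widetilde D^{\mathcal E,+}\Psi\rangle=\langle\Theta,\widetilde D^{\mathcal E,+}\Psi\rangle$ for all $\Psi\in\mathcal H$. The chirality condition is self-adjoint for $\widetilde D^{\mathcal E,+}$ (this uses (\ref{vanishing0}): the boundary term in the Green formula for $\widetilde D^{\mathcal E,+}$ involves $\langle c^{\mathcal E}(\nu)\Psi_1,\Psi_2\rangle$, which vanishes when both satisfy a chirality boundary condition), so the weak identity shows $\widetilde D^{\mathcal E,+}(\widetilde D^{\mathcal E,+}\Upsilon-\Theta)=0$ in the distributional sense; local elliptic regularity for the Dirac Laplacian then upgrades $\Xi:=\Upsilon$ to a genuine $L^2_1$ solution of $\widetilde D^{\mathcal E,+}\Xi=\Theta$, and uniqueness is immediate from the coercivity estimate applied to the difference of two solutions. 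I expect the main obstacle to be the rigorous justification that the asymptotic boundary term ${\rm Re}\int_{S^{n-1}_{r,+}}\langle\widetilde{\mathcal W}^{\mathcal E,+}(\mu)\Psi,\Psi\rangle\to 0$ for $\Psi$ in the relevant weighted space — i.e.\ pinning down the exact function space (weighted $L^2_1$ modelled on the Killing connection) in which compactly supported spinors are dense and in which this vanishing holds — together with the corresponding corner contributions along $S^{n-2}_r$; the interior analysis is standard once the function-analytic framework of \cite{HMR} is in place.
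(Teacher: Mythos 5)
Your proposal is essentially a reconstruction of the argument that the paper does not spell out at all: the published proof consists of the observation that $\widetilde D^{\mathcal E,+}\Phi\in L^2$ and a citation of the methods leading to \cite[Corollary 4.19]{GN}, and those methods are exactly of the type you describe (coercivity of $\widetilde D^{\mathcal E,+}$ under the chirality condition via the integrated Lichnerowicz formula, plus a closed-range/variational argument). So in spirit you are following the same route, just making it self-contained; your identification of the key estimate $\|\Psi\|_{L^2_1}\lesssim\|\widetilde D^{\mathcal E,+}\Psi\|_{L^2}$ on the space $\mathcal H$, with the sign hypotheses $R_g\geq -n(n-1)$, $H_g\geq 0$ entering through (\ref{parts4}), and your use of (\ref{vanishing0}) to see that the chirality condition is symmetric for the Green formula, are the right ingredients, and the weighted Poincar\'e inequality $\|\Psi\|_{L^2}\lesssim\|\widetilde\nabla^{\mathcal E,+}\Psi\|_{L^2}$ is indeed available in the asymptotically hyperbolic setting (it is the point where the hyperbolic asymptotics, not just completeness, are used, and it needs to be checked that the presence of the boundary does not spoil the usual radial integration-by-parts proof).

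There is, however, one step in your conclusion that is genuinely incomplete. The Riesz/Lax--Milgram argument on $\bigl(\mathcal H,\langle\widetilde D^{\mathcal E,+}\cdot,\widetilde D^{\mathcal E,+}\cdot\rangle_{L^2}\bigr)$ only produces $\Upsilon\in\mathcal H$ with $\langle \widetilde D^{\mathcal E,+}\Upsilon-\Theta,\widetilde D^{\mathcal E,+}\Psi\rangle_{L^2}=0$ for all $\Psi\in\mathcal H$; this says that $w:=\widetilde D^{\mathcal E,+}\Upsilon-\Theta\in L^2$ is a weak solution of $\widetilde D^{\mathcal E,+}w=0$ subject (weakly, by self-adjointness of the chirality condition) to the same boundary condition. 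Local elliptic regularity upgrades the smoothness of $w$ but does \emph{not} by itself give $w=0$, so it does not yield $\widetilde D^{\mathcal E,+}\Xi=\Theta$; what is missing is the statement that the $L^2$ kernel of this boundary value problem is trivial. This is where the argument must loop back on itself: applying the same cut-off/Lichnerowicz identity to $w$ (now only known to be in $L^2\cap H^1_{\rm loc}$, so the density/cut-off step has to be justified for it) forces $\widetilde\nabla^{\mathcal E,+}w=0$, and a nontrivial $\widetilde\nabla^{\mathcal E,+}$-parallel section has $|w|^2$ bounded below by a multiple of $e^{-t}$ along radial geodesics, hence is never $L^2$ against the hyperbolic volume growth, so $w=0$. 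Equivalently, one argues that the coercivity estimate gives injectivity and closed range, and the vanishing of the adjoint kernel gives surjectivity. With that supplement (or simply by invoking \cite[Corollary 4.19]{GN} as the paper does), your plan is correct.
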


\begin{proof}
	The assumption $\widetilde \nabla^{\mathcal E,+}\Phi\in L^2(\mathcal EM)$ clearly implies that 
	$\widetilde D^{\mathcal E,+}\Phi\in L^2(\mathcal EM)$ as well.  The result is then an easy  consequence of the methods leading to
	\cite[Corollary 4.19]{GN}.
\end{proof}

\subsection{Killing spinors}\label{killing}

We start by adapting a well-known definition.

\begin{definition}\label{defkill}
	We say that $\Phi\in\Gamma(\mathcal EM)$ is an {\rm imaginary Killing section to the number} $\pm {\bf i}/2$ if it is parallel with respect to $\widetilde \nabla^{\mathcal E,\pm}$, that is,
	\[
	\nabla^{\mathcal E}_X\Phi\pm\frac{\bf i}{2}c^{\mathcal E}(X)\Phi=0,\quad X\in\Gamma(TM).
	\]
	The space of all such sections is denoted by $\mathcal K^{g,\pm}(\mathcal EM)$.
	More generally,
	$\Phi$ is
	{\rm Killing-harmonic} if it satisfies any of the equations $\widetilde D^{\mathcal E,\pm}\Phi=0$.
	\end{definition}

\begin{remark}\label{killimp}
{\rm If $n$ is odd then 
\[
\Phi=\left(
\begin{array}{c}
\phi_+\\
\phi_-
\end{array}
\right)
\]
is imaginary Killing to the number ${\bf i}/2$ if and only if $\phi_{\pm}\in\Gamma(\mathbb SM)$ is imaginary Killing to the number $\pm {\bf i}/2$. Thus,
\[
\mathcal K^{g,+}(\mathcal EM)=
\left\{
\begin{array}{lc}
\mathcal K^{g,+}(\mathbb SM) & n\,\,{\rm even}\\
\mathcal K^{g,+}(\mathbb SM)\oplus\mathcal K^{g,-}(\mathbb SM) & n\,\,{\rm odd}.
\end{array}
\right.
\]
where $\mathcal K^{g,\pm}(\mathbb SM)$ is the space of {imaginary Killing spinors} to the number $\pm {\bf i}/2$.
}
\end{remark}

\begin{example}\label{killhyp}
	{\rm The conformal relationship between $(\mathbb B_+^n,\hat b)$ and $(\mathbb B^n_+,\delta)$ described in (\ref{confrep}) allows us to canonically identify the corresponding spinor bundles $\mathbb S\mathbb B_{+,\hat b}^n$ and $\mathbb S\mathbb B^n_{+,\delta}$, so that $\phi\in\Gamma(\mathbb S\mathbb B^n_{+,\delta})$ corresponds to a certain $\overline{\phi}\in\Gamma(\mathbb S\mathbb B_{+,\hat b}^n)$. Under this identification, if $u\in\Gamma(\mathbb S\mathbb B^n_{+,\delta})$ is a {\em constant}  (i.e. $\nabla^\delta$-parallel) spinor then
		the prescription  
	\begin{equation}\label{presc}
	\phi_{u,\pm}(x)=\omega(x)^{-1/2}\overline{\left(1\pm {\bf{i}} c^{\delta}(x)\right)u}\in\Gamma(\mathbb S\mathbb B_{+,\hat b}^n)
	\end{equation}
	exhausts the space $\mathcal K^{\hat b,\pm}(\mathbb S\mathbb B^n_+)$ \cite{Bau}. Here, $\delta$ refers to the  Euclidean metric. In particular, if $n$ is odd then by Remark \ref{killimp}, 
\begin{equation}\label{rep2}
\Phi_{u,v}=\left(
\begin{array}{c}
	\phi_{u,+}\\
		\phi_{v,-}
\end{array}
\right)\in\mathcal K^{\hat b,+}(\mathcal E\mathbb B^n_+).
\end{equation}
}
	\end{example}

In general, if $(M,g,\Sigma)$ is asymptotically hyperbolic we may consider the space  
\[
\mathcal K^{g,\pm,(\pm)}(\mathcal EM)=\{\Phi\in\mathcal K^{g,\pm}(\mathcal EM);\mathcal Q^g\Phi=\pm\Phi\}
\]
of all imaginary Killing sections to the number $\pm{\bf i}/2$ satisfying the corresponding chirality boundary conditon. 
This space can be explicitly described for $\mathbb H^n_+$.
In view of Example \ref{killhyp} above, it is convenient here to consider the half-disk model $\mathbb B^n_+$   in Remark \ref{ballmodel}. 
If $Q^{\hat b}$ is the standard  chirality operator on $(\mathbb B^n_+,\hat b)$ and $\mathcal Q^{\hat b}$ is the corresponding     
boundary chirality operator, then these data naturally induce corresponding operators $Q^{\delta}$ and $\mathcal Q^\delta$ on $(\mathbb B^n_+,\delta)$. Now let 
$
\mathcal K^{\delta,(\pm)}
$
be the space of all constant spinors $u\in\Gamma(\mathbb S\mathbb B_+^n,\delta)$ which satisfy 
$\mathcal Q^{\delta}u=\pm u$. 

\begin{proposition}\label{isopresc}
If $n$ is even the prescription $u\mapsto \phi_{u,\pm}$ in (\ref{presc}) defines  isomorphisms $\mathcal K^{\delta,(+)}\cong\mathcal K^{\hat b,\pm,(+)}(\mathbb S\mathbb B_n^+)$ and $\mathcal K^{\delta,(-)}\cong\mathcal K^{\hat b,\pm,(-)}(\mathbb S\mathbb B_n^+)$. 
	\end{proposition}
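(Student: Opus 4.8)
The plan is to verify that the prescription $u\mapsto\phi_{u,\pm}$ from \eqref{presc}, which by Example \ref{killhyp} is already known to give an isomorphism $\mathcal K^{\delta}\cong\mathcal K^{\hat b,\pm}(\mathbb S\mathbb B^n_+)$ (here $\mathcal K^\delta$ denotes all constant spinors, $n$ even), is compatible with the boundary chirality projections on both sides. Concretely, it suffices to show that for a constant spinor $u$ one has $\mathcal Q^{\hat b}\phi_{u,\pm}=\pm\phi_{\mathcal Q^\delta u,\pm}$ pointwise along $\partial\mathbb B^n_+$, or at least that $\mathcal Q^{\hat b}$ carries $\phi_{u,\pm}$ to something of the same form $\phi_{u',\pm}$ with $u'=\mathcal Q^\delta u$; the isomorphism statement then follows by restricting the known isomorphism to the $\pm1$-eigenspaces and noting the eigenspaces correspond. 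So the single real computation is the intertwining relation between $\mathcal Q^{\hat b}$ acting on $\mathcal K^{\hat b,\pm}(\mathbb S\mathbb B^n_+)$ and $\mathcal Q^\delta$ acting on $\mathcal K^\delta$.

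First I would write $\mathcal Q^{\hat b}=Q^{\hat b}c^{\hat b}(\nu^{\hat b})$ and $\mathcal Q^\delta=Q^\delta c^\delta(\nu^\delta)$, where $\nu$ is the inward unit normal to $\partial\mathbb B^n_+=\{x_n=0\}$. Under the conformal identification $\mathbb S\mathbb B^n_{+,\hat b}\cong\mathbb S\mathbb B^n_{+,\delta}$ the Clifford multiplications are related by $c^{\hat b}(X)=c^\delta(X)$ after the standard rescaling (unit $\hat b$-vectors correspond to $\omega^{-1}$ times unit $\delta$-vectors, and Clifford multiplication by a unit vector is metric-independent under the canonical identification), and crucially the chirality operator $Q$ — being Clifford multiplication by the complex volume element when $n$ is even — is the \emph{same} operator on both sides because the volume element is conformally invariant under the canonical identification. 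Hence $\mathcal Q^{\hat b}$ and $\mathcal Q^\delta$ are literally the same endomorphism of the identified spinor bundle along the boundary. The remaining point is that along $\partial\mathbb B^n_+$ the normal direction is $e_n$ and, for a constant spinor $u$,
\begin{align*}
\mathcal Q^{\hat b}\phi_{u,\pm}(x)
&=\omega(x)^{-1/2}\,\overline{Q c^\delta(e_n)\bigl(1\pm\mathbf{i}\,c^\delta(x)\bigr)u}.
\end{align*}
Since $x=(x_1,\dots,x_{n-1},0)$ on the boundary, $c^\delta(e_n)$ anticommutes with $c^\delta(x)$, so $c^\delta(e_n)(1\pm\mathbf{i}c^\delta(x))=(1\mp\mathbf{i}c^\delta(x))c^\delta(e_n)$; but $Q$ anticommutes with $c^\delta(x)$ as well (it anticommutes with Clifford multiplication by all tangent vectors), which flips the sign back, giving $\mathcal Q^{\hat b}\phi_{u,\pm}=\phi_{u',\pm}$ with $u'=Qc^\delta(e_n)u=\mathcal Q^\delta u$. (One must double-check the sign bookkeeping coming from the complex conjugation in \eqref{presc} and from the $\mathbf i$; this is the one place to be careful, but it is a finite Clifford-algebra manipulation, not an analytic difficulty.)

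With the intertwining relation $\mathcal Q^{\hat b}\circ(u\mapsto\phi_{u,\pm})=(u\mapsto\phi_{u,\pm})\circ\mathcal Q^\delta$ in hand, the conclusion is immediate: the known linear isomorphism $\mathcal K^\delta\to\mathcal K^{\hat b,\pm}(\mathbb S\mathbb B^n_+)$ commutes with the respective order-two involutions, hence restricts to isomorphisms between their $+1$-eigenspaces and between their $-1$-eigenspaces, i.e. $\mathcal K^{\delta,(+)}\cong\mathcal K^{\hat b,\pm,(+)}(\mathbb S\mathbb B^n_+)$ and $\mathcal K^{\delta,(-)}\cong\mathcal K^{\hat b,\pm,(-)}(\mathbb S\mathbb B^n_+)$. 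The main obstacle is purely the careful verification of the sign conventions in the last displayed computation — in particular tracking how complex conjugation in \eqref{presc} interacts with $\mathbf i$ and with $Q$, and confirming that the pointwise identity along the \emph{whole} boundary (not just one point) is what is needed, which it is since $\phi_{u,\pm}\in\mathcal K^{\hat b,\pm}$ is determined by its value anywhere and membership in $V^{\mathcal E,(\pm)}$ is a pointwise condition along $\Sigma$.
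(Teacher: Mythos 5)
Your argument is correct and follows essentially the same route as the paper: the identical pointwise Clifford-algebra manipulation (anticommuting the normal past $c^\delta(x)$, then $Q$ past $c^\delta(x)$) gives $\mathcal Q^{\hat b}\phi_{u,\pm}=\phi_{\mathcal Q^\delta u,\pm}$ along $\partial\mathbb B^n_+$, and restricting Baum's isomorphism to the $\pm 1$-eigenspaces is exactly what the paper does by checking the eigenvalue condition directly for $u\in\mathcal K^{\delta,(\pm)}$. The only slip is terminological: the overline in (\ref{presc}) is the canonical complex-linear identification of the spinor bundles of $\hat b$ and $\delta$, not complex conjugation, which is precisely why your $\mathbf{i}$-bookkeeping goes through without incident.
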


\begin{proof}
Let $\nu_{\hat b}=\omega^{-1}\nu_\delta$ be the hyperbolic unit normal along $\partial \mathbb B^n_+$, where $\nu_\delta=\partial_n$ is the Euclidean inward unit normal.  Notice that $c^\delta(\nu^{\delta})c^\delta(x)=-c^\delta(x)c^\delta(\nu^\delta)$ if $x\in \partial\mathbb B^n_+$. Thus, if $u\in \mathcal K^{\delta,(+)}$, 
\begin{eqnarray*}
	(\mathcal Q^{\hat b}\phi_{u,\pm})(x) 
	& =  & (Q^{\hat b}c^{\hat b}(\nu^{\hat b})\phi_{u,\pm})(x)\\
	& = & \omega(x)^{-1/2}Q^{\hat b}c^{\hat b}(\nu^{\hat b}){(\overline{u}\pm\overline{c^\delta(x)u}})\\
	& = & \omega(x)^{-1/2}\left(\overline{Q^\delta c^\delta(\nu^\delta)u}\pm
	\overline{Q^\delta c^\delta(\nu^\delta)c^\delta(x)u}\right)\\
	& = & \omega(x)^{-1/2}\left(\overline{u}\mp
	\overline{Q^\delta c^\delta(x)c^\delta(\nu^\delta)u}\right)\\
	& = & \omega(x)^{-1/2}\left(\overline{u}\pm
	\overline{c^\delta(x)Q^\delta c^\delta(\nu^\delta)u}\right)\\
	& = & \omega(x)^{-1/2}\left(\overline{u}\pm
	\overline{c^\delta(x)u}\right)\\
	& = & \phi_{u,\pm}(x).
\end{eqnarray*}
Similarly, if $u\in \mathcal K^{\delta,(-)}$ we compute that  $(\mathcal Q^{\hat b}\phi_{u,\pm})(x) =-\phi_{u,\pm}(x)$.
\end{proof}

This leads to the following result, which confirms that $\mathbb H^n_+$ carries the maximal number of linearly independent such sections.

\begin{corollary}\label{dimkill}
	We have
	\[
	\dim_{\mathbb C}\mathcal K^{b,+,(\pm)}(\mathcal E\mathbb H^n_+)=2^{k-1},
	\]
	if $n=2k$ or $n=2k+1$.
	As a consequence,
	\[
		\dim_{\mathbb C}\mathcal K^{b,+}(\mathcal E\mathbb H^n_+)=2^k={\rm rank}\,\mathbb S\mathbb H^n_+.
	\]
\end{corollary}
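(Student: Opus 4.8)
The plan is to deduce the dimension count from Proposition \ref{isopresc} by carefully tracking how the constant-spinor conditions $\mathcal{Q}^\delta u = \pm u$ cut down the space of $\nabla^\delta$-parallel spinors on $\mathbb{B}^n_+$. First I would recall that a constant spinor on $(\mathbb{B}^n_+,\delta)$ is determined by its value at a single point, so the space of all constant spinors is isomorphic to the fiber of $\mathbb{S}\mathbb{B}^n_{+,\delta}$, which has complex rank $2^k$ when $n=2k$ or $n=2k+1$. (In the odd case one uses the doubled bundle $\mathcal{E}\mathbb{B}^n_+$ as set up in Section \ref{spinorsbd}, and Remark \ref{killimp} to reduce to the honest spinor bundle.) The operator $\mathcal{Q}^\delta = Q^\delta c^\delta(\nu^\delta)$ is a self-adjoint involution on this fiber, and its $\pm 1$-eigenspaces $\mathcal{K}^{\delta,(\pm)}$ split the fiber into two pieces.

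The key step is to show that these two eigenspaces have equal dimension $2^{k-1}$. The cleanest way is to exhibit an explicit isomorphism between them: since $c^\delta(x)$ for a fixed tangent vector $x \in \partial\mathbb{B}^n_+$ anticommutes with both $Q^\delta$ (it is a Clifford multiplication, and $Q^\delta$ anticommutes with Clifford multiplication by tangent vectors, by Definition \ref{chiralop}) and with $c^\delta(\nu^\delta)$ (orthogonal vectors Clifford-anticommute), it follows that $c^\delta(x)$ commutes with $\mathcal{Q}^\delta = Q^\delta c^\delta(\nu^\delta)$. Wait — I need an operator that \emph{anticommutes} with $\mathcal{Q}^\delta$ to swap the eigenspaces; $c^\delta(\nu^\delta)$ itself anticommutes with $Q^\delta$ but commutes with $c^\delta(\nu^\delta)^2$-type factors, so $c^\delta(\nu^\delta)\mathcal{Q}^\delta = c^\delta(\nu^\delta)Q^\delta c^\delta(\nu^\delta) = -Q^\delta c^\delta(\nu^\delta)c^\delta(\nu^\delta) = -Q^\delta(c^\delta(\nu^\delta))^2 = \pm Q^\delta$, so in fact $c^\delta(\nu^\delta)$ anticommutes with $\mathcal{Q}^\delta$ (up to the sign of $(c^\delta(\nu^\delta))^2 = \mp 1$, which one checks carefully). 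Hence $c^\delta(\nu^\delta)$, or a suitable multiple such as $\mathbf{i}c^\delta(\nu^\delta)$ chosen to be an involution, restricts to an isomorphism $\mathcal{K}^{\delta,(+)} \xrightarrow{\sim} \mathcal{K}^{\delta,(-)}$. Therefore each has complex dimension $\tfrac12 \cdot 2^k = 2^{k-1}$.

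Combining this with Proposition \ref{isopresc} (in the even case, directly) or with the odd-dimensional reduction of Example \ref{killhyp}/Remark \ref{killimp} and the analogue of Proposition \ref{isopresc} for $\mathcal{E}\mathbb{B}^n_+$, one gets $\dim_{\mathbb{C}}\mathcal{K}^{b,+,(\pm)}(\mathcal{E}\mathbb{H}^n_+) = 2^{k-1}$ after transporting everything along the isometry $(\mathbb{H}^n_+,b) \cong (\mathbb{B}^n_+,\hat b)$ of Remark \ref{ballmodel} and the conformal identification of spinor bundles. Finally, since $\mathcal{K}^{b,+}(\mathcal{E}\mathbb{H}^n_+)$ decomposes orthogonally as $\mathcal{K}^{b,+,(+)} \oplus \mathcal{K}^{b,+,(-)}$ (the boundary chirality operator $\mathcal{Q}^b$ being an involution on the space of imaginary Killing sections — one must check it preserves this space, which follows from $Q$ being parallel and the explicit form of the Killing equation), we conclude $\dim_{\mathbb{C}}\mathcal{K}^{b,+}(\mathcal{E}\mathbb{H}^n_+) = 2^k = \mathrm{rank}\,\mathbb{S}\mathbb{H}^n_+$.

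The main obstacle I anticipate is bookkeeping the odd-dimensional case: one must make sure the doubled-bundle formalism of Section \ref{spinorsbd}, the representation \eqref{rep1} of chirality-condition sections, and the pair prescription \eqref{rep2} all fit together so that the factor-of-two counting is not accidentally doubled or halved, and that the sign conventions for $(c^\delta(\nu^\delta))^2$ and for the chirality operator $Q$ from the swap map are consistent. A secondary, more routine point is verifying that the conformal/isometric identifications are compatible with the chirality operators — but this is essentially the content of Proposition \ref{isopresc}, which we may invoke.
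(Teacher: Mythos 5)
Your even-dimensional argument is sound and is essentially what the paper leaves implicit (it simply calls this case ``obvious''): by Proposition \ref{isopresc} the spaces $\mathcal K^{b,+,(\pm)}(\mathcal E\mathbb H^n_+)$ correspond to the $\pm1$-eigenspaces $\mathcal K^{\delta,(\pm)}$ of the involution $\mathcal Q^\delta=Q^\delta c^\delta(\nu^\delta)$ acting on the $2^k$-dimensional space of constant spinors, and since $c^\delta(\nu^\delta)$ preserves constant spinors and anticommutes with $\mathcal Q^\delta$ (your computation is correct: $c^\delta(\nu^\delta)\mathcal Q^\delta=-Q^\delta c^\delta(\nu^\delta)^2$ while $\mathcal Q^\delta c^\delta(\nu^\delta)=Q^\delta c^\delta(\nu^\delta)^2$, whatever the sign of $c^\delta(\nu^\delta)^2$), the two eigenspaces are isomorphic and each has dimension $2^{k-1}$.

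The genuine gap is the odd-dimensional case, which you defer to ``bookkeeping'' and to an ``analogue of Proposition \ref{isopresc} for $\mathcal E\mathbb B^n_+$'' that the paper does not provide: Proposition \ref{isopresc} is stated and proved only for $n$ even, and establishing its odd counterpart is exactly the missing content. Moreover, the mechanism in odd dimensions is \emph{not} an eigenspace halving of constant spinors, so your even-case template does not transfer formally. In the paper one starts from the full Killing space, which by Remark \ref{killimp} and (\ref{rep2}) is parametrized by an \emph{independent} pair $(u,v)$ of constant spinors, and imposes the chirality condition through the characterization (\ref{rep1}) of sections of $V^{\mathcal E,(\pm)}$; along $\partial\mathbb B^n_+$ this reads
\[
\bigl(1-\mathbf{i}\,c^\delta(x)\bigr)v=\pm\,c^\delta(\nu^\delta)\bigl(1+\mathbf{i}\,c^\delta(x)\bigr)u ,
\]
and evaluation at $x=0$ gives $v=\pm c^\delta(\nu^\delta)u$: the two components of the section in $\mathcal EM$ are forced to come from one and the same constant spinor. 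Without this computation your plan does not determine the count at all, and this is precisely the ``factor of two doubled or halved'' risk you yourself flag. Finally, your closing step $\mathcal K^{b,+}=\mathcal K^{b,+,(+)}\oplus\mathcal K^{b,+,(-)}$ is not justified by ``$Q$ is parallel'' alone, since $\mathcal Q=Qc^{\mathcal E}(\nu)$ is defined only along the boundary and splitting a Killing section pointwise on $\Sigma$ does not obviously produce two global Killing sections; in the model this is obtained from the explicit parametrization $u\mapsto\phi_{u,\pm}$ of (\ref{presc}) by decomposing $u$ into $\mathcal Q^\delta$-eigencomponents, which is the route the paper's proof (implicitly) relies on.
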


\begin{proof}
	The even case is obvious since ${\rm rank}\,\mathbb S\mathbb B_+^n=2^k$ equals the dimension of the space of constant spinors. If $n$ is odd note that if $\Psi_{u,v}$ in (\ref{rep2})  is of the form (\ref{rep1}) then 
	\[
	v-c^\delta(x)v=\pm c^\delta(\nu^\delta)(u+c^\delta(x)u),\quad x\in\partial\mathbb B^n_+.
	\]
	By taking $x=0$ yields $v=\pm c^{\delta}(\nu^{\delta})u$, so the entries in (\ref{rep2}) depend on the {\em same} constant spinor. 
\end{proof}

\section{The positive mass theorem and its consequences}\label{mainsec}

In this section we present the proofs of our main results, namely, Theorems \ref{maintheo} and \ref{conjmptheo} below.  
We then explain how they imply Theorems \ref{maintheocor}, \ref{riggen} and  \ref{rigconfcom} in the Introduction.

We consider an asymptotically hyperbolic manifold $(M,g,\Sigma)$ in the sense of Definition \ref{def:as:hyp}. We fix a chart at infinity $F:(\mathbb H^n_{+,r_0},b)\to (M_{\rm ext},g)$. In fact, since $\mathfrak m_{(g,b,F)}$ is an asymptotic invariant, we may assume that $F$ is a {\em global} diffeomorphism between $\mathbb H^n_+$ and $M$. In any case this allows us to construct a gauge map $\mathcal G$ acting on tangent vectors so that 
\begin{equation}\label{impogau}
\langle \mathcal GX,\mathcal GY\rangle_g=\langle X,Y\rangle_b, \quad \langle\mathcal GX,Y\rangle_g=\langle X,\mathcal GY\rangle_g. 
\end{equation}
and such that, in the asymptotic region, 
\begin{equation}\label{asymfr0}
\mathcal G=I-\frac{1}{2}\mathcal H+\mathcal R, \quad \mathcal H= O(r^{-\tau}), \quad \mathcal R=O(r^{-2\tau}). 
\end{equation}
In terms of an orthonormal $b$-frame $\{\mathfrak f_i\}_{i=1}^n$ in Definition \ref{def:as:hyp}, this last requirement means that $\mathfrak e_i=\mathcal G\mathfrak f_i$ given by
\begin{equation}\label{asymfr}
\mathfrak e_i=\mathfrak f_i-\frac{1}{2}\mathcal H\mathfrak f_i+\mathcal R\mathfrak f_i
\end{equation}
is an orthonormal $g$-frame. 
Notice that $\mathcal H=I-\mathcal G^2+O(r^{-2\tau})$, so that 
\begin{equation}\label{normrad2}
e(X,Y)=\langle \mathcal HX,Y\rangle_g +O(r^{-2\tau}),
\end{equation}
whenever $X$ and $Y$ are uniformly bounded vector fields. 

The gauge map $\mathcal G$ induces an identification between the bundles $\mathcal E\mathbb H^n_{+,r_0}$ and $\mathcal E M_{\rm ext}$ endowed with the metric structures coming from $b$ and $g$, respectively.  Thus, if $\varphi$ is a cut-off function on $M$ with $\varphi=1$ on $M_{\rm ext}$ and $\Phi\in\Gamma(\mathcal E\mathbb H_{+}^n)$ then $\Phi_*:=\varphi\mathcal G\Phi\in\Gamma(\mathcal EM)$ and the map $\Phi\mapsto\Phi_*$ is a (fiberwise) isometry in a neighborhood of infinity. We apply this construction to $\Phi\in \mathcal K^{b,+,(\pm)}(\mathcal E\mathbb H^n_+)$ an imaginary Killing section 
satisfying a chirality boundary condition; see Example \ref{killhyp} and Corollary \ref{dimkill}.  We set
\[
\mathcal C_b^+=\{V\in\mathcal N_b^+;\langle\langle V,V\rangle\rangle=0, \langle\langle V,V_{(0)}\rangle\rangle\geq 0\}
\]
to be the future-pointing null cone.

\begin{proposition}\label{prelimlem}
For any $\Phi\in \mathcal K^{b,+,(\pm)}(\mathcal E\mathbb H^n_+) $ we have that  $V_\Phi:=\langle\Phi,\Phi\rangle\in\mathcal N_b^+$, and it satisfies $\langle\langle V_{\Phi},V_{\Phi}\rangle\rangle\geq 0$ and $\langle\langle V_{\Phi},V_{(0)}\rangle\rangle\geq 0$. Moreover, every $V\in \mathcal C_b^+$ can be written as $V=V_{\Phi}$ for some $\Phi\in \mathcal K^{b,+,(\pm)}(\mathcal E\mathbb H^n_+) $. In particular, $|\Phi|_b=O(r^{1/2})$.
\end{proposition}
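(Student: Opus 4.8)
The plan is to reduce everything to the explicit description of imaginary Killing sections on $\mathbb H^n_+$ furnished by Example \ref{killhyp} and Corollary \ref{dimkill}, and then to compute $V_\Phi=\langle\Phi,\Phi\rangle$ directly in the Poincar\'e half-disk model. First I would recall that, by Proposition \ref{isopresc} and Corollary \ref{dimkill}, every $\Phi\in\mathcal K^{b,+,(\pm)}(\mathcal E\mathbb H^n_+)$ arises from a constant spinor $u\in\mathcal K^{\delta,(\pm)}$ via the prescription (\ref{presc}) (together with (\ref{rep2}) when $n$ is odd, where by Corollary \ref{dimkill} the two entries come from the \emph{same} $u$). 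The function $V_\Phi=\langle\Phi,\Phi\rangle$ should then be computed: using $|\phi_{u,\pm}(x)|^2=\omega(x)^{-1}|(1\pm\mathbf i c^\delta(x))u|^2=\omega(x)^{-1}(1+|x|^2)|u|^2$, since $c^\delta(x)$ is skew-adjoint and $c^\delta(x)^2=-|x|^2$, one gets $V_\Phi(x)=\omega(x)^{-1}(1+|x|^2)|u|^2$ (with an extra factor of $2$ in the odd case, harmless after rescaling $u$). Comparing with the list of generators of $\mathcal N_{\widehat b}^+$ in Remark \ref{ballmodel}, namely $\widehat V_{(0)}=(1+|x|^2)/(1-|x|^2)$ and $\widehat V_{(j)}=2x_j/(1-|x|^2)$, and recalling $\omega(x)=(1-|x|^2)/2$, we see $V_\Phi=2|u|^2\,\widehat V_{(0)}$, so in particular $V_\Phi\in\mathcal N_b^+$.

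Next I would identify the causal character. The most efficient route is to write $V_\Phi$ in terms of the Lorentzian inner product $\langle\langle\cdot,\cdot\rangle\rangle$ on $\mathcal N_b^+$: the map $u\mapsto V_{\phi_{u,\pm}}$ (or its odd analogue) is a quadratic map from $\mathcal K^{\delta,(\pm)}\cong\mathbb C^{2^{k-1}}$ into $\mathcal N_b^+$, and a routine polarization using Clifford relations expresses the coefficients of $V_\Phi$ along the basis $\{V_{(0)},\dots,V_{(n-1)}\}$ as $V_{(0)}$-component $=|u|^2$ (up to a fixed positive constant) and $V_{(j)}$-component a hermitian expression $\langle c^\delta(e_j)c^\delta(\nu^\delta)u,u\rangle$-type quantity whose modulus is bounded by $|u|^2$, by Cauchy--Schwarz and the fact that Clifford multiplication by unit vectors is an isometry. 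This yields simultaneously $\langle\langle V_\Phi,V_{(0)}\rangle\rangle\ge 0$ and $\langle\langle V_\Phi,V_\Phi\rangle\rangle\ge 0$. (Alternatively, and more cleanly, one observes from $V_\Phi=2|u|^2\widehat V_{(0)}$ in the computation above that $V_\Phi$ is in fact a \emph{nonnegative multiple of the timelike generator} $V_{(0)}$ once the chirality constraint is imposed --- this is the real content of restricting to $\mathcal N_b^+$ rather than $\mathcal N_b$; I would double-check whether the chirality condition forces $V_\Phi$ to be exactly proportional to $V_{(0)}$, in which case $\langle\langle V_\Phi,V_\Phi\rangle\rangle$ is actually $=|u|^4$ times a positive constant, or whether one genuinely gets a cone's worth of vectors.)

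For the surjectivity onto $\mathcal C_b^+$: given $V\in\mathcal C_b^+$, I would use the above parametrization to solve $V=V_\Phi$. If $V_\Phi$ always lands on the ray $\mathbb R_{\ge 0}V_{(0)}$ then $\mathcal C_b^+$ had better coincide with that ray (which should be checked from the definition of $\mathcal C_b^+$ as the future null cone in a Lorentzian space form --- in $\mathbb R^{1,n-1}$-signature the ``future null cone'' is genuinely $(n-1)$-dimensional, so in fact $V_\Phi$ must fill out a full null cone and the $V_{(j)}$-components cannot all vanish); in either case the point is that $u\mapsto V_{\phi_{u,\pm}}$ maps the sphere $\{|u|^2=1\}$ in $\mathbb C^{2^{k-1}}$ onto the ``celestial sphere'' $\{V\in\mathcal C_b^+:\langle\langle V,V_{(0)}\rangle\rangle=1\}$, a surjection of spheres that one verifies by a dimension count plus the standard twistor-type identity expressing a null covector as $\langle c(\cdot)u,u\rangle$. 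Scaling in $|u|$ then covers the whole cone, giving the claim. Finally, $|\Phi|_b^2=V_\Phi=O(r)$ follows since $\widehat V_{(0)}=O(r)$ as $r\to\infty$ in the half-disk model (equivalently, every $V\in\mathcal N_b^+$ is $O(r)$, as already noted after (\ref{statpotbd})), so $|\Phi|_b=O(r^{1/2})$.

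The main obstacle I anticipate is the surjectivity statement: pinning down precisely which subset of $\mathcal N_b^+$ is swept out by $\{V_\Phi\}$ requires tracking the chirality constraint $\mathcal Q^\delta u=\pm u$ through the quadratic map and identifying the image with the future null cone $\mathcal C_b^+$ rather than some smaller set; this is the twistor-spinor computation that makes the correspondence ``every future-pointing null static potential is the square of a chirality Killing spinor'' work, and it is where the condition $\langle\langle V,V_{(0)}\rangle\rangle\ge 0$ (time-orientation) has to be matched with the sign conventions in (\ref{presc}). The curvature/Clifford computations themselves are routine.
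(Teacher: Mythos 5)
There is a genuine gap, and it occurs at the very first computation. You claim $|\phi_{u,\pm}(x)|^2=\omega(x)^{-1}\bigl|(1\pm\mathbf{i}\,c^\delta(x))u\bigr|^2=\omega(x)^{-1}(1+|x|^2)|u|^2$, arguing that the cross term dies because $c^\delta(x)$ is skew-adjoint. But skew-adjointness makes $\langle c^\delta(x)u,u\rangle$ purely \emph{imaginary}, so ${\rm Re}\,\langle \pm\mathbf{i}\,c^\delta(x)u,u\rangle=\pm\mathbf{i}\langle c^\delta(x)u,u\rangle$ is a real number that does \emph{not} vanish in general. That cross term is precisely what produces the spatial components of $V_\Phi$: the correct formula (this is the computation in the paper's proof) is $V_\Phi=|u|^2V_{(0)}+\mathbf{i}\sum_{i=1}^n\langle c^\delta(\partial_{x_i'})u,u\rangle V_{(i)}$, and the chirality constraint kills only the $i=n$ term (by the same manipulation as in (\ref{vanishing0})), which is exactly why $V_\Phi$ lands in $\mathcal N_b^+$ but is in general \emph{not} proportional to $V_{(0)}$. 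Your conclusion $V_\Phi=2|u|^2\widehat V_{(0)}$ is therefore false, and you yourself notice the resulting tension (the future null cone is $(n-1)$-dimensional, so the image cannot be a single ray) without resolving it; as written, the second and third paragraphs hedge between two mutually inconsistent pictures, and the surjectivity onto $\mathcal C_b^+$ --- the part you correctly identify as the delicate point --- is never actually established.

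Two further remarks. First, your Cauchy--Schwarz argument for $\langle\langle V_\Phi,V_\Phi\rangle\rangle\geq 0$ bounds each spatial component separately by $|u|^2$, which is not enough: you need the Euclidean norm of the whole vector $\bigl(\mathbf{i}\langle c^\delta(\partial_{x_i'})u,u\rangle\bigr)_i$ to be at most $|u|^2$ (this follows by applying Cauchy--Schwarz to Clifford multiplication by the unit vector in the direction of that component vector, or, as the paper does, by quoting Baum's Theorem 1 in \cite{Bau}). Second, once the correct formula for $V_\Phi$ is in hand, the surjectivity onto the future null cone is the statement that the quadratic map $u\mapsto V_\Phi$ sweeps out all null future-directed directions; the paper handles the causal character via Baum and treats the null case as corresponding to $\mathcal C_b^+$, whereas your ``sphere onto celestial sphere'' idea would need the genuine spatial components you dropped in order to even get off the ground. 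The final assertion $|\Phi|_b=O(r^{1/2})$ from $V=O(r)$ is fine and agrees with the paper.
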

\begin{proof}
	If $n$ is even so that $\Phi={\phi_{u,+}}$,
	direct computations starting from (\ref{presc}) show that for $x'\in\mathbb B^n_+$,
	\[
	V_\Phi(x')=|u|^2V_{(0)}(x')+{\bf i}\sum_{i=1}^n\langle c^\delta(\partial_{x_i'})u,u\rangle V_{(i)}(x')
	\]
	and $\langle\langle V_{\Phi},V_{\Phi}\rangle\rangle=	|u|^4+\sum_{i=1}^n\langle c^\delta(\partial_{x_i'})u,u\rangle^2$. 
	However, the same calculation as that leading to (\ref{vanishing0}) shows that 	$\langle c^\delta(\partial_{x_n'})u,u\rangle=0$. 
	As shown in \cite[Theorem 1]{Bau}, $\langle\langle V_{\Phi},V_{\Phi}\rangle\rangle$ is a nonnegative constant and of course the case $\langle\langle V_{\Phi},V_{\Phi}\rangle\rangle=0$ corresponds to $V_{\Phi}\in\mathcal C_b^+$. 
	Notice that the same conclusion holds if we had taken $\Phi=\phi_{u,-}$ instead. 
	If $n$ is odd, (\ref{rep2}) gives  
	\[
	\langle\Phi,\Phi\rangle=\langle \phi_{u,+},\phi_{u,+}\rangle +\langle \phi_{v,-},\phi_{v,-}\rangle,
	\]
	which also proves the first assertions in those dimensions. Finally, the last assertion follows from the fact that $V=O(r)$.
\end{proof}

We now take a Killing section $\Phi\in \mathcal K^{b,+,(\pm)}(\mathcal E\mathbb H^n_+)$ so that $V_\Phi\in\mathcal N_b^+$ as in Proposition \ref{prelimlem}. We may then extend the transplanted section $\Phi_*$ to the whole of $M$ so that the given chilarity boundary condition is satisfied along $\Sigma$. 
Also, a well-known computation shows that 
\[
|\widetilde \nabla^{\mathcal E,+}\Phi_*|_g\leq C\left(|\mathcal G-I|_b+|\nabla^b\mathcal G|_b\right)|\Phi|_b=O(r^{-\tau+1/2}),
\]
so that $\widetilde \nabla^{\mathcal E,+}\Phi_*\in L^2$ and we may apply Proposition \ref{existspin} to obtain $\Xi\in L^2_1(\mathcal EM)$ such that $\widetilde D^{\mathcal E,+}\Xi=-\widetilde D^{\mathcal E,+}\Phi_*$ and $\Xi^{(\pm)}=0$ along $\Sigma$. Thus, $\Psi_{\Phi}:=\Phi_*+\Xi$ is Killing harmonic ($\widetilde D^{\mathcal E,+}\Psi_{\Phi}=0$) and  $\Psi_{\Phi}^{(\pm)}=0$ along $\Sigma$. Moreover, by  taking into account the identification between $M$ and $\mathbb H^n_+$ given by $F$,  we see that $\Psi_\Phi$ asymptotes $\Phi$ at infinity in the sense that $\Psi_\Phi-\Phi\in L^2_1(M)$. We now state our first main result, which provides a Herzlich-Chru\'sciel-Witten-type formula for the mass functional.

\begin{theorem}\label{maintheo}
	With the notation above,
	\begin{equation}\label{maintheo2}
	\frac{1}{4}\mathfrak m_{(g,b,F)}(V_\Phi)=\int_M\left(|\nabla^{\mathcal E,+}\Psi_\Phi|^2+\frac{R_g+n(n-1)}{4}|\Psi_\Phi|^2\right)dM+\frac{1}{2}\int_\Sigma H_g|\Psi_\Phi|^2d\Sigma,
	\end{equation}
	for any $\Phi\in\mathcal K^{b,+,(\pm)}(\mathcal E\mathbb H^n_+)$.
	\end{theorem}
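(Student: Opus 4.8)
The plan is to apply the integral Lichnerowicz formula of Proposition~\ref{streg} to a carefully chosen exhaustion of $M$, and then identify the boundary term at infinity with the mass functional $\mathfrak m_{(g,b,F)}(V_\Phi)$ via the Taylor expansion of the spinor connection around $b$. Concretely, I would take $\Omega = \Omega_r$ to be the compact region of $M$ bounded by $\breve S^{n-1}_{r,+} = F(S^{n-1}_{r,+})$ together with the compact part of $\Sigma$ it cuts off, and apply (\ref{parts4}) to $\Psi = \Psi_\Phi$ on $\Omega_r$. Since $\Psi_\Phi$ is Killing-harmonic, $\widetilde D^{\mathcal E,+}\Psi_\Phi = 0$, so the $|\widetilde D^{\mathcal E,+}\Psi_\Phi|^2$ term drops out; since $\Psi_\Phi^{(\pm)} = 0$ along $\Sigma$ and $H_g \geq 0$ is not yet assumed here (this is a pure identity), the boundary term over $\Sigma$ contributes exactly $\tfrac12\int_\Sigma H_g|\Psi_\Phi|^2\,d\Sigma$. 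What remains is to show that
\[
\lim_{r\to+\infty}{\rm Re}\int_{\breve S^{n-1}_{r,+}}\left\langle\widetilde{\mathcal W}^{\mathcal E,+}(\nu)\Psi_\Phi,\Psi_\Phi\right\rangle d\breve S^{n-1}_{r,+} = \frac14\,\mathfrak m_{(g,b,F)}(V_\Phi).
\]
Note $\widetilde\nabla^{\mathcal E,+}\Psi_\Phi$ coincides with $\nabla^{\mathcal E,+}\Psi_\Phi$ in the notation of (\ref{maintheo2}), so modulo this boundary identification the formula follows.

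The heart of the argument is the boundary integral computation. Here I would write $\Psi_\Phi = \Phi_* + \Xi$ with $\Xi \in L^2_1$, so $\Xi$ and its derivatives decay fast enough that $\Xi$ contributes nothing in the limit (one checks this using $|\Phi|_b = O(r^{1/2})$ from Proposition~\ref{prelimlem}, the decay $\widetilde\nabla^{\mathcal E,+}\Phi_* = O(r^{-\tau+1/2})$ with $\tau > n/2$, and a density/Cauchy-Schwarz argument to handle the cross terms $\langle\cdot\,\Phi_*,\Xi\rangle$ and $\langle\cdot\,\Xi,\Phi_*\rangle$). Thus it suffices to evaluate the limit with $\Psi_\Phi$ replaced by the transplanted exact Killing section $\Phi_* = \mathcal G\Phi$. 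Using the gauge expansion (\ref{asymfr})--(\ref{asymfr0}), one expands $\widetilde\nabla^{\mathcal E,+}_X\Phi_*$ in terms of the flat Killing equation $\widetilde\nabla^{\mathcal E,+,b}_X\Phi = 0$ on $\mathbb H^n_+$: the leading correction is linear in $\mathcal H$ (equivalently in $e = g-b$, by (\ref{normrad2})) and its derivatives, plus quadratic remainders of order $O(r^{-2\tau+1})$ which integrate to zero over $S^{n-1}_{r,+}$. Collecting the linear terms and using the algebraic identities for Clifford multiplication together with $V_\Phi = \langle\Phi,\Phi\rangle$, the boundary integrand reorganizes — after this standard but lengthy Clifford-algebra bookkeeping — into $\tfrac14\langle\mathbb U(V_\Phi,e),\mu\rangle$ plus total divergences on $S^{n-1}_{r,+}$. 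This is precisely the Witten–Chruściel–Herzlich computation, and I would cite \cite{CH,Wa} for the interior part of this manipulation.

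The genuinely new feature, and what I expect to be the main obstacle, is the lower-dimensional corner contribution along $S^{n-2}_r = \partial S^{n-1}_{r,+} \subset \partial\mathbb H^n_+$. In the boundaryless Witten argument there is no such term, but here the region $\breve S^{n-1}_{r,+}$ has a boundary, and when one integrates the total-divergence terms produced in the previous step over $S^{n-1}_{r,+}$ by Stokes' theorem one picks up a flux through $S^{n-2}_r$. The claim is that this corner flux matches exactly the $-\int_{S^{n-2}_r}V_\Phi\,e(\eta,\vartheta)\,dS^{n-2}_r$ term appearing in the definition (\ref{massdef}) of $\mathfrak m_{(g,b,F)}$. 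To see this I would exploit the chirality boundary condition: along $\Sigma$ we have $\mathcal Q\Psi_\Phi = \pm\Psi_\Phi$, hence (\ref{vanishing0}) gives $\langle c^{\mathcal E}(\nu)\Psi_\Phi,\Psi_\Phi\rangle = 0$, and more to the point the pointwise identities $\langle\Phi,c^{\mathcal E}(\eta)c^{\mathcal E}(\mu)\Phi\rangle$-type expressions restricted to $\partial\mathbb H^n_+$ can be controlled using $\mathcal Q = Qc^{\mathcal E}(\eta)$ and the fact that $Q$ anti-commutes with Clifford multiplication. Combined with the structure of $\Phi$ on $\mathbb H^n_+$ from Example~\ref{killhyp} (in particular that the relevant components of $V_\Phi$ satisfy the Neumann condition $\partial V_\Phi/\partial\eta = 0$, consistent with $V_\Phi \in \mathcal N_b^+$), the corner terms collapse to exactly the expression in (\ref{massdef}). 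It is worth noting — as in Remark~\ref{massbterm} — that choosing Fermi coordinates around $\Sigma$ makes $e_{\alpha n} = 0$, which kills the corner term on the nose and considerably streamlines this final bookkeeping; I would carry out the computation in that gauge and then invoke the gauge-invariance of $\mathfrak m_{(g,b,F)}$ established in Theorem~\ref{geoinv}.
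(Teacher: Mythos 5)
Your overall architecture --- apply (\ref{parts4}) to $\Psi_\Phi$ on the truncated region, use Killing-harmonicity and the chirality condition to isolate the flux through $S^{n-1}_{r,+}$, and work in the Fermi gauge of Remark \ref{massbterm} so the corner term of (\ref{massdef2}) is absent --- is indeed the paper's. The genuine gap is your claim that \emph{all} contributions of $\Xi$ to the flux at infinity can be discarded by decay plus Cauchy--Schwarz, reducing everything to the transplanted Killing section $\Phi_*$. The term pairing $\widetilde{\mathcal W}^{\mathcal E,+}(\nu)\Phi_*$ against $\Xi$, and the term quadratic in $\Xi$, can be handled that way (along a sequence of radii), but the cross term ${\rm Re}\int_{S^{n-1}_{r,+}}\langle\widetilde\nabla^{\mathcal E,+}_\mu\Xi,\Phi_*\rangle\,dS^{n-1}_{r,+}$ cannot: $\Xi$ is only known to lie in $L^2_1$, so Cauchy--Schwarz bounds it by $\bigl(\int_{S^{n-1}_{r,+}}|\widetilde\nabla^{\mathcal E,+}\Xi|^2\bigr)^{1/2}\bigl(\int_{S^{n-1}_{r,+}}|\Phi_*|^2\bigr)^{1/2}$, whose second factor grows like $r^{n/2}$ (since $|\Phi_*|=O(r^{1/2})$ and the hemisphere has area $O(r^{n-1})$), while mere $L^2$-integrability of $|\widetilde\nabla^{\mathcal E,+}\Xi|^2$ only makes the first factor small along a sequence, nowhere near the required $o(r^{-n/2})$. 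So the reduction to the ``pure $\Phi_*$'' integrand is unjustified, and this is not a removable technicality: eliminating this term is the technical heart of the proof.

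What the paper does instead is to keep exactly this cross term (it shows the full flux limit \emph{equals} $\lim_r{\rm Re}\int_{S^{n-1}_{r,+}}\langle\widetilde\nabla^{\mathcal E,+}_\mu\Xi,\Phi_*\rangle$) and to process it with the Lee--Parker device: one introduces the $(n-2)$-form $\varepsilon=\langle[\mathfrak e_l\cdot,\mathfrak e_m\cdot]\Phi_*,\Xi\rangle\,\mathfrak e_l\righthalfcup\mathfrak e_m\righthalfcup dM$ and applies Stokes on $S^{n-1}_{r,+}$, which trades $\widetilde\nabla^{\mathcal E,+}\Xi$ for $\widetilde D^{\mathcal E,+}\Xi=-\widetilde D^{\mathcal E,+}\Phi_*$ (which \emph{does} decay, like $O(r^{-\tau+1/2})$) at the cost of a corner integral $\int_{S^{n-2}_r}\varepsilon$. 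That corner integral is then shown to vanish (Lemma \ref{prelimlem2}) by a pointwise Clifford-algebra computation using the chirality conditions satisfied by \emph{both} $\Phi_*$ and $\Xi$, which give $\langle\mathfrak e_\alpha\cdot\nu\cdot\Phi_*,\Xi\rangle=0$. So the corner subtlety you correctly anticipate is real, but it enters through this mixed $\Phi_*$--$\Xi$ integration by parts rather than through total divergences in the pure-$\Phi_*$ computation; the latter is only invoked at the very end, as in \cite{CH}, to identify $\lim_r{\rm Re}\int\langle\mu\cdot\widetilde D^{\mathcal E,+}\Phi_*,\Phi_*\rangle$ with $-\tfrac14\lim_r\int\langle\mathbb U(V_\Phi,e),\mu\rangle$. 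Without this (or an equivalent) mechanism for the derivative-of-$\Xi$ term, your argument does not close.
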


For the proof we may assume that the chart at infinity is chosen as in Remark \ref{massbterm}. After  using (\ref{parts4}) with $\Omega=M_r$, the region in $M$ bounded by $\Sigma_r\cup S_{r,+}^{n-1}$, and $\Psi=\Psi_{\Phi}$ we get
	\begin{eqnarray*}
{\rm Re}\int_{S_{r,+}^{n-1}}
\left\langle {\widetilde{\nabla}}_\mu^{\mathcal E,+}\Psi_\Phi,\Psi_\Phi\right\rangle dS_{r,+}^{n-1}
& = & 
\int_{M_r}\left(|\widetilde\nabla^{\mathcal E,+} \Psi_\Phi|^2+\frac{R_g+n(n-1)}{4}|\Psi_\Phi|^2\right)dM_r\nonumber\\
&  &\quad +\int_{\Sigma_r}
\frac{H}{2}|\Psi_\Phi|^2 d\Sigma,
\end{eqnarray*}
and using that the second term in the right-hand of  (\ref{massdef2}) does not contribute to the mass, 
we need to check that 
\begin{equation}\label{checkf}
	\lim_{r\to +\infty}{\rm Re}\int_{S_{r,+}^{n-1}}
	\left\langle {\widetilde{\nabla}}_\mu^{\mathcal E,+}\Psi_\Phi,\Psi_\Phi\right\rangle dS_{r,+}^{n-1}=\lim_{r\to+\infty}\frac{1}{4}\int_{S_{r,+}^{n-1}}\langle\mathbb U(V_{\Phi},e),\mu\rangle dS_{r,+}^{n-1}.
\end{equation}
In fact, after splitting the integrand on the left-hand side by means of the decomposition $\Psi_\Phi=\Phi_*+\Xi$, we see that algebraic cancellations and the decaying properties of $\widetilde\nabla^{\mathcal E,+}\Phi_*$ and $\Xi$ imply that 
\[
	\lim_{r\to +\infty}{\rm Re}\int_{S_{r,+}^{n-1}}
\left\langle {\widetilde{\nabla}}_\mu^{\mathcal E,+}\Psi_\Phi,\Psi_\Phi\right\rangle dS_{r,+}^{n-1}=
	\lim_{r\to +\infty}{\rm Re}\int_{S_{r,+}^{n-1}}
\left\langle {\widetilde{\nabla}}_\mu^{\mathcal E,+}\Xi,\Phi_*\right\rangle dS_{r,+}^{n-1},
\] 	
so we shall focus on the last integrand.

To proceed we follow \cite{LP} and introduce the $(n-2)$-form
\[
\varepsilon=\left\langle[\mathfrak e_l\cdot,\mathfrak e_m\cdot]\Phi_*,\Xi\right\rangle \mathfrak e_l\righthalfcup\mathfrak e_m\righthalfcup dM,
\]
where for simplicity we denote the Clifford multiplication  by a dot.

\begin{lemma}\label{prelimlem2}
	We have 
	\[
	\lim_{r\to +\infty}\int_{S^{n-2}_r}\varepsilon=0.
	\] 
\end{lemma}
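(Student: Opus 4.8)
The object $\varepsilon$ is an $(n-2)$-form on $M$, restricted to the boundary sphere $S^{n-2}_r=\partial S^{n-1}_{r,+}$, and the claim is that its integral disappears in the limit. The natural strategy is to apply Stokes' theorem on the hemisphere $S^{n-1}_{r,+}$, whose boundary is precisely $S^{n-2}_r$, thereby trading $\int_{S^{n-2}_r}\varepsilon$ for $\int_{S^{n-1}_{r,+}}d\varepsilon$ plus, crucially, whatever contribution comes from the part of $\partial(\text{region})$ sitting on $\Sigma$. So the first step is to identify, in the ambient manifold, a compact hypersurface-with-corner whose boundary decomposes as $S^{n-2}_r$ together with a piece lying inside $\Sigma$; concretely one integrates $d\varepsilon$ over the disk $S^{n-1}_{r,+}$ (or over the relevant cap), getting a boundary term on $S^{n-2}_r$ and a boundary term on $\Sigma\cap\{|x|\le r\}$. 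The $\Sigma$-contribution is where the chirality boundary condition enters: since $\Xi^{(\pm)}=0$ and $\Phi_*^{(\pm)}=0$ along $\Sigma$, the pairing $\langle[\mathfrak e_l\cdot,\mathfrak e_m\cdot]\Phi_*,\Xi\rangle$ with $\mathfrak e_l,\mathfrak e_m$ tangent to $\Sigma$ should vanish there — exactly as in the computation leading to \eqref{vanishing0} — so that boundary term drops out entirely.

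\textbf{Key steps in order.} First I would write $\int_{S^{n-2}_r}\varepsilon = \int_{S^{n-1}_{r,+}}d\varepsilon + (\text{term on }\Sigma)$ via Stokes, and dispose of the $\Sigma$-term using the chirality condition as above. Second, I would compute $d\varepsilon$: differentiating the coefficient $\langle[\mathfrak e_l\cdot,\mathfrak e_m\cdot]\Phi_*,\Xi\rangle$ brings down covariant derivatives $\nabla^{\mathcal E}_{\mathfrak e_j}\Phi_*$ and $\nabla^{\mathcal E}_{\mathfrak e_j}\Xi$, and after Clifford-algebra manipulations (the standard identity $\sum_j\mathfrak e_j\cdot\nabla_{\mathfrak e_j} = D$ together with the commutator structure) one obtains an expression of the schematic form
\[
d\varepsilon = \left(\langle\widetilde D^{\mathcal E,+}\Phi_*,\widetilde D^{\mathcal E,+}\Xi\rangle - \langle\widetilde\nabla^{\mathcal E,+}\Phi_*,\widetilde\nabla^{\mathcal E,+}\Xi\rangle + \text{curvature/Killing corrections}\right)dM,
\]
the point being that every term is a pointwise pairing of objects each of which decays. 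Third, I would invoke the decay estimates already assembled: $\widetilde\nabla^{\mathcal E,+}\Phi_* = O(r^{-\tau+1/2})$, $\Xi\in L^2_1(M)$ with $\widetilde\nabla^{\mathcal E,+}\Xi\in L^2$, and $\widetilde D^{\mathcal E,+}\Xi = -\widetilde D^{\mathcal E,+}\Phi_*$, so that $d\varepsilon$ is integrable on $M$ and in particular $\int_{S^{n-1}_{r,+}}d\varepsilon\to 0$ as $r\to\infty$ (the integral of an $L^1$ density over a family of sets escaping to infinity). Combining these three facts yields $\lim_{r\to\infty}\int_{S^{n-2}_r}\varepsilon = 0$.

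\textbf{Main obstacle.} The delicate part is the explicit computation of $d\varepsilon$ and verifying that the $\Sigma$-boundary term genuinely vanishes under the chirality condition — one must be careful that the frame $\{\mathfrak e_i\}$ can be arranged so that along $\Sigma$ the relevant indices are tangential, and that the commutator bracket $[\mathfrak e_l\cdot,\mathfrak e_m\cdot]$ interacts correctly with the projections $P^{(\pm)}_{\mathcal Q}$; this is precisely the kind of sign-bookkeeping done in Proposition \ref{streg}. A secondary point is keeping track of the Killing correction terms (coming from $\widetilde\nabla^{\mathcal E,+} = \nabla^{\mathcal E} + \frac{\mathbf i}{2}c^{\mathcal E}$ rather than $\nabla^{\mathcal E}$), but these are lower-order and controlled by $|\Phi_*||\Xi|$ which is integrable given $|\Phi|_b=O(r^{1/2})$ and $\Xi\in L^2$. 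Once these are handled the conclusion is immediate, and this lemma then feeds into establishing \eqref{checkf} and hence the mass formula \eqref{maintheo2}.
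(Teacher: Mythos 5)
Your strategy has genuine gaps, and it also misses the simplification that makes the lemma easy. The circle $S^{n-2}_r=\partial S^{n-1}_{r,+}$ lies entirely \emph{inside} $\Sigma$, so the paper simply evaluates $\varepsilon$ there: by (\ref{asymfr}), restricted to the boundary one has $\varepsilon=-4\langle\mathfrak e_\alpha\cdot\mathfrak e_n\cdot\Phi_*,\Xi\rangle\, dS^{n-2}_r+\langle O(r^{-\tau})\cdot\Phi_*,\Xi\rangle\, dS^{n-2}_r$; since both $\Phi_*$ and $\Xi$ satisfy the chirality condition along $\Sigma$ and $\mathfrak e_n=-\nu$, one gets $\langle\mathfrak e_\alpha\cdot\nu\cdot\Phi_*,\Xi\rangle=\langle\nu\cdot\mathfrak e_\alpha\cdot\Phi_*,\Xi\rangle$, whence by the Clifford relations this equals $-\langle\mathfrak e_\alpha,\nu\rangle_g\langle\Phi_*,\Xi\rangle=0$ pointwise; the $O(r^{-\tau})$ remainder then integrates to zero using $|\Phi_*|=O(r^{1/2})$, the decay of $\Xi$ and $dS^{n-2}_r=O(r^{n-2})$. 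Your instinct that the chirality condition kills a boundary contribution is exactly right, but you apply it to a phantom term: since $\partial S^{n-1}_{r,+}=S^{n-2}_r$ precisely, your Stokes decomposition ``$\int_{S^{n-2}_r}\varepsilon=\int_{S^{n-1}_{r,+}}d\varepsilon+(\text{term on }\Sigma)$'' contains no extra piece on $\Sigma$; the chirality computation has to be applied to $\varepsilon$ itself on $S^{n-2}_r\subset\Sigma$, which is the whole proof.

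The steps you propose instead would not go through. First, your schematic formula for $d\varepsilon$ is wrong in degree: $d\varepsilon$ is an $(n-1)$-form, namely $4\bigl(\langle\widetilde{\mathcal W}^{\mathcal E,+}(\mathfrak e_l)\Phi_*,\Xi\rangle-\langle\Phi_*,\widetilde{\mathcal W}^{\mathcal E,+}(\mathfrak e_l)\Xi\rangle\bigr)\,\mathfrak e_l\righthalfcup dM$, not a multiple of $dM$; you have conflated it with the divergence identity behind the Lichnerowicz formula. Second, the decay argument collapses: the terms pairing $\Phi_*$, which \emph{grows} like $r^{1/2}$, against $\widetilde\nabla^{\mathcal E,+}\Xi$, which is only in $L^2$ with no pointwise control, are not in $L^1(M)$, and even an $L^1$ bound on the coefficients of an $(n-1)$-form would not force its integrals over the hypersurfaces $S^{n-1}_{r,+}$ to tend to zero for every $r$ (at best along a subsequence, via the coarea formula). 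Finally, note that $\int_{S^{n-1}_{r,+}}d\varepsilon$ is exactly the difference of the two hemisphere integrals of $\widetilde{\mathcal W}^{\mathcal E,+}$-terms that the lemma is designed to let one discard in the proof of (\ref{checkf}); so your route amounts to establishing directly over the hemisphere the very cancellation that the lemma obtains cheaply on the boundary circle, which is harder, not easier.
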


\begin{proof}
	It follows from (\ref{asymfr}) that 
	$\mathfrak e_i \righthalfcup\mathfrak e_j=\mathfrak f_i\righthalfcup\mathfrak f_j+O(r^{-\tau})$, so if we again take $\mathfrak f$ as in Remark \ref{massbterm} and use that $\mathfrak f_n \righthalfcup\mathfrak f_\alpha\righthalfcup dM=dS^{n-2}_r$ we have that, restricted to the boundary, 
	\[
	\varepsilon=-4\langle\mathfrak e_\alpha \cdot\mathfrak e_n\cdot\Phi_*,\Xi\rangle dS^{n-2}_r+\langle O(r^{-\tau})\cdot\Phi_*,\Xi\rangle dS^{n-2}_r.
	\] 
	Using that $dS^{n-2}_r=O(r^{n-2})$, $|\Phi_*|=O(r^{1/2})$ and the decaying properties of $\Xi$, we see that the last term integrates to zero at infinity. 
	On the other hand, recalling that $\mathfrak e_n=-\nu$ and  both $\Phi_*$ and $\Xi$ satisfy the chirality boundary conditions along $\Sigma$, 
	the remaining integrand equals
	\begin{eqnarray*}
		4\langle \mathfrak e_\alpha\cdot\nu \cdot\Phi_*,\Xi\rangle & = & 4\langle\mathfrak e_\alpha\cdot\nu\cdot(\pm Q\nu\cdot\Phi_*),\pm Q\nu\cdot\Xi\rangle\\
		& = & -4\langle Q\mathfrak e_\alpha\cdot\Phi_*,Q\nu\cdot\Xi\rangle\\
		& = & -4\langle\mathfrak e_\alpha\cdot\Phi_*,\nu\cdot\Xi\rangle,
	\end{eqnarray*}  
	so that 
	\begin{equation}\label{compeq}
	\langle \mathfrak e_\alpha\cdot\nu\cdot\Phi_*,\Xi\rangle=
	\langle \nu\cdot\mathfrak e_\alpha\cdot\Phi_*,\Xi\rangle.
	\end{equation}
	From this and  
	Clifford relations we get
	\begin{eqnarray*}
		\langle \mathfrak e_\alpha\cdot\nu \cdot\Phi_*,\Xi\rangle & = & 
		\frac{1}{2}\langle (\mathfrak e_\alpha\cdot\nu \cdot+\nu\cdot\mathfrak e_\alpha\cdot)\Phi_*,\Xi\rangle\\
		& = & -\langle\mathfrak e_\alpha,\nu\rangle_g\langle\Phi_*,\Xi\rangle\\
		& = & 0,
	\end{eqnarray*}
	which completes the proof.
\end{proof}

A straightforward computation gives 
\[
d\varepsilon=4\left(\langle \widetilde{\mathcal W}^{\mathcal E,+}(\mathfrak e_l)\Phi_*,\Xi\rangle-\langle \Phi_*, \widetilde{\mathcal W}^{\mathcal E,+}(\mathfrak e_l)\Xi\rangle\right){\mathfrak e_l}\righthalfcup dM,
\]
where by (\ref{bdterm}),
\begin{equation}\label{decayc}
 \widetilde{\mathcal W}^{\mathcal E,+}(\mathfrak e_l)=-(\delta_{lm}+\mathfrak e_l\cdot\mathfrak e_m\cdot)\widetilde\nabla^{\mathcal E,+}_{\mathfrak e_m}=-\frac{1}{2}[\mathfrak e_l\cdot,\mathfrak e_m\cdot]\widetilde\nabla^{\mathcal E,+}_{\mathfrak e_m}.
\end{equation}
Hence,
\[
\int_{S_{r,+}^{n-1}}\langle  \widetilde{\mathcal W}^{\mathcal E,+}(\mathfrak e_l)\Phi_*,\Xi\rangle {\mathfrak e_l}\righthalfcup dM-\int_{S_{r,+}^{n-1}}\langle  \widetilde{\mathcal W}^{\mathcal E,+}(\mathfrak e_l)\Xi,\Phi_*\rangle {\mathfrak e_l}\righthalfcup dM = \frac{1}{4}\int_{S_{r}^{n-2}}\varepsilon,
\]
where we used that
$S_r^{n-2}=\partial S_{r,+}^{n-1}$.   Thus,  
\begin{eqnarray*}
	{\rm Re}\int_{S_{r,+}^{n-1}}\left\langle \widetilde\nabla^{\mathcal E,+}_{\mathfrak e_l}\Xi,\Phi_*\right\rangle{\mathfrak e_l}\righthalfcup dM
	 & = & - {\rm Re}\int_{S_{r,+}^{n-1}}\left\langle (\widetilde{\mathcal W}^{\mathcal E,+}(\mathfrak e_l)-\mathfrak e_l\cdot \widetilde D^{\mathcal E,+})\Xi,\Phi_*\right\rangle{\mathfrak e_l}\righthalfcup dM\\
	& = & - {\rm Re}\int_{S_{r,+}^{n-1}}\left\langle \widetilde{\mathcal W}^{\mathcal E,+}(\mathfrak e_l)\Phi_*,\Xi\right\rangle{\mathfrak e_l}\righthalfcup dM + \frac{1}{4}{\rm Re}\int_{S_r^{n-2}}\varepsilon \\
	& & \quad + {\rm Re}\int_{S_{r,+}^{n-1}}\left\langle \mathfrak e_l\cdot \widetilde D^{\mathcal E,+}\Xi,\Phi_*\right\rangle{\mathfrak e_l}\righthalfcup dM\\
	& = & 
	-{\rm Re}\int_{S_{r,+}^{n-1}}\left\langle \widetilde{\mathcal W}^{\mathcal E,+}(\mathfrak e_l)\Phi_*,\Xi\right\rangle{\mathfrak e_l}\righthalfcup dM + \frac{1}{4}{\rm Re}\int_{S_r^{n-2}}\varepsilon \\
	& & \quad  - {\rm Re}\int_{S_{r,+}^{n-1}}\left\langle \mathfrak e_l\cdot \widetilde D^{\mathcal E,+}\Phi_*,\Phi_*\right\rangle{\mathfrak e_l}\righthalfcup dM,
	\end{eqnarray*}
where we used that $\Psi_\Phi$ is Killing harmonic in the last step. 
Again due to the decay properties and (\ref{decayc}), the first integral in the right-hand side above vanishes at infinity, so recalling  that $\mathfrak e_i=\mathfrak f_i+O(r^{-\tau})$ we finally
obtain
\[
\lim_{r\to+\infty}	{\rm Re}\int_{S_{r,+}^{n-1}}\langle\widetilde{\nabla}_\mu^{\mathcal E,+}\Xi,\Phi_*\rangle dS_{r,+}^{n-1}  = 
-\lim_{r\to +\infty}{\rm Re}\int_{S_{r,+}^{n-1}}\left\langle \mu\cdot \widetilde D^{\mathcal E,+}\Phi_*,\Phi_*\right\rangle d{S_{r,+}^{n-1}},
	\]
where we used Lemma \ref{prelimlem2} to get rid of the integral involving $\varepsilon$.
Now, a standard computation as in \cite[Section 4]{CH} shows that
\[
\lim_{r\to +\infty}{\rm Re}\int_{S_{r,+}^{n-1}}\left\langle \mu\cdot \widetilde D^{\mathcal E,+}\Phi_*,\Phi_*\right\rangle d{S_{r,+}^{n-1}}=-
\lim_{r\to +\infty}\frac{1}{4}\int_{S_{r,+}^{n-1}}\langle \mathbb U(V_\Phi,e),\mu\rangle dS_{r,+}^{n-1},
\]
which proves (\ref{checkf})
and completes the proof of Theorem \ref{maintheo}.

We now explain how the mass formula (\ref{maintheo2}) implies Conjecture \ref{conjmp} in case $M$ is spin. More precisely, we have the following result.

\begin{theorem}\label{conjmptheo}
	Let $(M,g,\Sigma)$ be an asymptotically hyperbolic spin manifold with $R_g\geq -n(n-1)$ and $H_g\geq 0$. Then for  any admissible chart $F$ the mass vector $\mathcal P^{[F]}$ is time-like  future directed, unless it vanishes and $(M,g,\Sigma)$ is isometric to $(\mathbb H^n_{+},b,\partial \mathbb H^n_+)$.
\end{theorem}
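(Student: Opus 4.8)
The plan is to read off both assertions from the mass formula of Theorem~\ref{maintheo}. Since the hypotheses give $R_g+n(n-1)\geq 0$ and $H_g\geq 0$, the right-hand side of (\ref{maintheo2}) is a sum of nonnegative terms, so $\mathfrak m_{(g,b,F)}(V_\Phi)\geq 0$ for every $\Phi\in\mathcal K^{b,+,(\pm)}(\mathcal E\mathbb{H}^n_+)$; by the surjectivity part of Proposition~\ref{prelimlem} this means $\mathfrak m_{(g,b,F)}(V)\geq 0$ for all $V$ in the future null cone $\mathcal C_b^+$. I would then translate this into the causal character of $\mathcal P^{[F]}$ by elementary linear algebra: writing a general $V\in\mathcal N_b^+$ as $\sum_a v_aV_{(a)}$ and using linearity, $\mathfrak m_{(g,b,F)}(V)=\sum_a v_a\mathcal P^{[F]}_a$; a future-pointing null $V$ has $v_0=|(v_1,\dots,v_{n-1})|$, and choosing $(v_1,\dots,v_{n-1})$ antiparallel to $(\mathcal P^{[F]}_1,\dots,\mathcal P^{[F]}_{n-1})$ forces $\mathcal P^{[F]}_0\geq|(\mathcal P^{[F]}_1,\dots,\mathcal P^{[F]}_{n-1})|$. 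Hence $\langle\langle\mathcal P^{[F]},\mathcal P^{[F]}\rangle\rangle\geq 0$ and $\mathcal P^{[F]}_0\geq 0$, i.e. $\mathcal P^{[F]}$ is future-pointing causal.

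Next I would analyze the borderline case $\langle\langle\mathcal P^{[F]},\mathcal P^{[F]}\rangle\rangle=0$. Using Theorem~\ref{geoinv} (invariance under the $O^+(n-1,1)$-action on $\mathcal N_b^+$) one may assume $\mathcal P^{[F]}$ is either $0$ or a fixed nonzero null vector; in either case there is a nonzero $V\in\mathcal C_b^+$ with $\mathfrak m_{(g,b,F)}(V)=0$ — any null $V$ in the first case, the null ray opposite to $\mathcal P^{[F]}$ in the second — and by Proposition~\ref{prelimlem} we write $V=V_\Phi$ for a nontrivial $\Phi$. For this $\Phi$ the right-hand side of (\ref{maintheo2}) vanishes, so each of its nonnegative terms does: in particular $\Psi_\Phi$ is parallel for $\widetilde\nabla^{\mathcal E,+}$, $(R_g+n(n-1))|\Psi_\Phi|^2\equiv 0$, and $H_g|\Psi_\Phi|^2\equiv 0$ on $\Sigma$. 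Being parallel for the first-order connection $\widetilde\nabla^{\mathcal E,+}$ and asymptotic to the nonzero $\Phi$, the section $\Psi_\Phi$ vanishes nowhere; hence $R_g\equiv -n(n-1)$ and $H_g\equiv 0$. Moreover $\langle c^{\mathcal E}(\nu)\Psi_\Phi,\Psi_\Phi\rangle\equiv 0$ along $\Sigma$ by (\ref{vanishing0}), and I expect a short computation with the chirality boundary condition and the induced connection $\nabla^{\mathcal E,\intercal}$ to upgrade $H_g\equiv 0$ to: $\Sigma$ is totally geodesic.

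It then remains to show that a nontrivial $\widetilde\nabla^{\mathcal E,+}$-parallel section on such a manifold forces rigidity. Contracting the (vanishing) curvature of $\widetilde\nabla^{\mathcal E,+}$ with Clifford multiplication gives ${\rm Ric}_g=-(n-1)g$, and the norm $f:=|\Psi_\Phi|^2$ is a positive, nonconstant function with $\nabla^2_gf=fg$ and $\partial f/\partial\nu\equiv 0$ on $\Sigma$. A complete Einstein manifold carrying such a nontrivial concircular potential has constant sectional curvature, necessarily $-1$ here; combined with the asymptotically hyperbolic structure (which determines the end and rules out nontrivial quotients) and with $\Sigma$ totally geodesic, this identifies $(M,g,\Sigma)$ isometrically with $(\mathbb{H}^n_+,b,\partial\mathbb{H}^n_+)$. (When $n$ is odd one applies this to a nontrivial component of $\Psi_\Phi$; see Remark~\ref{killimp}.) In particular, the ``nonzero null'' possibility would force $(M,g,\Sigma)$ to be the model, whose mass vector vanishes — a contradiction — so $\mathcal P^{[F]}$ is future-pointing timelike unless it is $0$, and in the latter case $(M,g,\Sigma)$ is the hyperbolic half-space. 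This is precisely the content of Conjecture~\ref{conjmp} in the spin category.

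I expect the main obstacle to lie in the last two paragraphs: squeezing genuine geometric rigidity out of the vanishing of (\ref{maintheo2}). The delicate points are the improvement $H_g\equiv 0\Rightarrow\Sigma$ totally geodesic from the boundary condition, and the step ``Einstein $+$ one imaginary Killing section $\Rightarrow$ constant curvature'' — a single Killing spinor does not by itself annihilate the Weyl tensor, so one must exploit the concircular function $f$ and completeness — together with correctly identifying the global model in the presence of a non-compact, totally geodesic boundary.
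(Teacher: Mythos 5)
Your first half is fine and matches the paper: nonnegativity of the right-hand side of (\ref{maintheo2}), surjectivity onto the null cone from Proposition \ref{prelimlem}, and the linear-algebra translation into ``$\mathcal P^{[F]}$ is future causal'' is exactly how the argument starts (one small slip: if $\mathcal P^{[F]}$ is nonzero null, the null direction with vanishing pairing is the ray of $\mathcal P^{[F]}$ itself, not the opposite one). Likewise, equality for one $\Phi$ does produce a nowhere-vanishing $\widetilde\nabla^{\mathcal E,+}$-parallel section, hence ${\rm Ric}_g=-(n-1)g$; and the boundary improvement you only ``expect'' is in fact immediate: differentiating $\mathcal Q\Psi=\pm\Psi$ along $\Sigma$ gives $Qc^{\mathcal E}(\nabla_X\nu)\Psi=0$, so $\nabla_X\nu=0$ and $\Sigma$ is totally geodesic.

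The genuine gap is your final rigidity step. The assertion ``a complete Einstein manifold carrying a nontrivial positive concircular potential ($\nabla^2_gf=fg$) has constant sectional curvature'' is false: on the warped product $dt^2+e^{2t}g_N$ with $(N,g_N)$ complete Ricci-flat but not flat, the function $f=e^{t}$ satisfies $\nabla^2 f=fg$, the metric is Einstein with ${\rm Ric}=-(n-1)g$, and it is not of constant curvature; by Baum's classification \cite{Bau} these are precisely the non-rigid examples carrying a single imaginary Killing spinor, so neither the Killing section nor the concircular function $|\Psi_\Phi|^2$ can rule them out. Excluding them ``by the asymptotically hyperbolic structure'' is exactly the hard point you leave unargued, and it is where the paper takes a different route: once $g$ is Einstein and $\Sigma$ totally geodesic, the alternate expression (\ref{asymhypl2}) for the mass (Theorem \ref{asymhypl}, from \cite{dLGM}; see Remark \ref{remeins}) forces $\mathcal P^{[F]}=0$, so equality in (\ref{maintheo2}) holds for \emph{every} $\Phi\in\mathcal K^{b,+,(\pm)}(\mathcal E\mathbb H^n_+)$. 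By Corollary \ref{dimkill} this yields the maximal number $2^k$ of $\widetilde\nabla^{\mathcal E,+}$-parallel sections, which does annihilate the curvature and makes $g$ locally hyperbolic; restricting these sections to $\Sigma$ gives a maximal space of imaginary Killing spinors there, so $(\Sigma,\gamma)$ is $\mathbb H^{n-1}$ by Baum, and doubling across the totally geodesic boundary identifies $(M,g,\Sigma)$ with $(\mathbb H^n_+,b,\partial\mathbb H^n_+)$. Without some substitute for this ``mass vanishes, hence all Killing sections exist'' step (or an independent argument pinning down the end structure of Baum's warped products against Definition \ref{def:as:hyp}), your proof of the rigidity statement does not go through.
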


\begin{proof}
From (\ref{maintheo2}) and the assumptions $R_g\geq -n(n-1)$ and $H_g\geq 0$ we see that 
\begin{equation}\label{forany}
\langle\langle\mathcal P^{[F]},V\rangle\rangle\geq 0
\end{equation}
for any $V=V_{\Phi}$ with $\Phi\in \mathcal K^{b,+,(\pm)}(\mathcal E\mathbb H^n_+)$. In particular, (\ref{forany}) holds for any $V\in\mathcal C_b^+$ since, by Proposition \ref{prelimlem}, any such $V$ can be written as $V=V_{\Phi}$ for some $\Phi\in \mathcal K^{b,+,(\pm)}(\mathcal E\mathbb H^n_+)$.
Hence, $\mathcal P^{[F]}$ is time-like and future directed  unless there exists some $V=V_\Phi\neq 0$ so that the equality holds in (\ref{forany}). This last possibility implies 
by (\ref{maintheo2}) that there exists  a non-trivial Killing section $\Psi=\Psi_\Phi$ on $M$ satisfying the corresponding chirality boundary condition along $\Sigma$. In particular, $g$ is Einstein with ${\rm Ric}_g=-(n-1)g$ so that $\widehat E_g=0$ as in Remark \ref{remeins}.  
 On the other hand, for any $X\in\Gamma(T\Sigma)$ we have, upon derivation of $\mathcal Q\Psi=\pm\Psi$, 
\[
Q c^{\mathcal E}(\nabla_X\nu)\Psi
=  -Qc^{\mathcal E}(\nu)\widetilde\nabla_X^{\mathcal E,+}\Psi\pm\widetilde\nabla_X^{\mathcal E,+}\Psi=0,
\]
and since $\Psi$ never vanishes we see that 
$\nabla_X\nu=0$, that is, $\Sigma$ is totally geodesic.
Thus, by our alternate definition (\ref{asymhypl2}) of the mass functional we see that $\mathcal P^{[F]}$ vanishes and hence the equality in (\ref{forany}) holds for {\em any} $V=V_{\Phi}$ with $\Phi\in  \mathcal K^{b,+,(\pm)}(\mathcal E\mathbb H^n_+)$.
By Corollary \ref{dimkill} and (\ref{maintheo2}), this means that $(M,g)$ carries as many Killing sections to the number ${\bf i}/2$ (i.e. parallel sections for the connection $\widetilde\nabla^{\mathcal E,+}$) as the reference space $(\mathbb H^n_+,b,\partial\mathbb H^n_+)$, which implies that $g$ is locally hyperbolic. Moreover, by (\ref{conn0}), Remark \ref{intrin} and Corollary \ref{dimkill}, the restrictions of these Killing sections to $\Sigma$ generate a space of imaginary Killing spinors on  $\mathbb S \Sigma$ with maximal dimension. In particular, this implies that $\Sigma$ has no compact components and so it is connected by Remark \ref{rmk:bd}. Also, a direct application of Gauss formula shows that $R_\gamma+(n-1)(n-2)=0$ so $(\Sigma,\gamma)$ is asymptotically hyperbolic as a boundaryless manifold (in the sense of \cite{CH}). A well-known result by H. Baum \cite{Bau} implies that  $(\Sigma,\gamma)$ is isometric to $(\mathbb H^{n-1},\beta)$. 
Now, we double the manifold $(M, g, \Sigma)$ along $\Sigma$ obtaining a locally hyperbolic manifold $(\widehat M, \widehat g)$  which is asymptotically hyperbolic as a boundaryless manifold. Standard topological arguments show that $(\widehat M, \widehat g)$ is isometric to $(\mathbb H^n, b)$ and finally $(M,g,\Sigma)$ is isometric to $(\mathbb H^n_+, b,  \partial\mathbb H^n_+)$.
\end{proof}

Clearly, Theorem \ref{maintheocor} is an immediate consequence of Theorem \ref{conjmptheo}.
Also, Theorem \ref{riggen} follows promptly from (\ref{asymhypl2}), Remark \ref{remeins} and Theorem \ref{conjmptheo}, and similarly for Theorem \ref{riggennbd}, which follows from the corresponding boundaryless statements.
We next present the proof of Theorem \ref{rigconfcom}. Since $g$ is Einstein, if $n=3$ the result follows from Theorem \ref{maintheocor}. If $n\geq 4$ the well-known computation in \cite{AD} shows that the contribution for the mass functional coming from integration over the asymptotic hemisphere $S^{n-1}_+$ vanishes. Thus, by (\ref{massdef2}) and the fact that $t
\approx r^{-1}$, it remains to check that the asymptotic integral over $S^{n-2}=\partial S^{n-1}_+$ vanishes as well, that is,
\[
\lim_{t\to 0}\int_{S^{n-2}_t}Ve_{1 n}dS^{n-2}_t= 0, 
\] 
where $V\in\mathcal N_b^+$ and we have chosen the frame $\mathfrak f$  so that  $\mathfrak f_1=\sinh t\,\partial_t=\vartheta$ and $\mathfrak f_n=-\eta$. Now recall that we can set up a gauge map $\mathcal G$ in the asymptotic region so that 
\[
\mathcal G\mathfrak f_i=\mathfrak f_i-\frac{1}{2}\mathcal H\mathfrak f_i+\mathcal R\mathfrak f_i, 
\]
is an orthonormal $g$-frame, 
where $\mathcal H=O(t^{n-2})$ and $\mathcal R=o(t^{n-1})$ by (\ref{remainder}). 
But notice that  (\ref{expan}) clearly leads to $\mathcal G\mathfrak f_1=\mathfrak f_1$ so we actually have  $\mathcal H\mathfrak f_1=o(t^{n-1})$. Thus, by the analogue of (\ref{normrad2}),
\[
	e_{1n}  =  \langle\mathcal H\mathfrak f_1,\mathfrak f_n\rangle_g +o(t^{n-1})=o(t^{n-1}).
\]	
Since $V=O(t^{-1})$ and $dS^{n-2}_t=O(t^{2-n})$,  
\[
\int_{S^{n-2}_t}Ve_{1 n}dS^{n-2}_t= o(1), 
\] 
which proves the claim and finishes the proof of Theorem \ref{rigconfcom}.


\begin{thebibliography}{999999} 

	\bibitem[ABdL]{ABdL} S. Almaraz, E. Barbosa, and L. L. de Lima,
	A positive mass theorem for asymptotically flat manifolds with a non-compact boundary. {\em Commun. Anal. Geom.} {\bf 24}(4) (2016), 673-715.
	
	\bibitem[An]{An}  
	M. T. Anderson, 
	Geometric aspects of the AdS/CFT correspondence. {\em  AdS/CFT correspondence: Einstein metrics and their conformal boundaries}, 1-31, 
	IRMA Lect. Math. Theor. Phys., 8, Eur. Math. Soc., Z\"urich, 2005.
	
	\bibitem[ACG]{ACG}
	L.~Andersson, M.~Cai, and G.~J. Galloway.
	\newblock Rigidity and positivity of mass for asymptotically hyperbolic
	manifolds.
	\newblock {\em Ann. Henri Poincar\'e}, 9(1):1--33, 2008.
	
	\bibitem[AD]{AD}
	L.~Andersson and M.~Dahl.
	\newblock Scalar curvature rigidity for asymptotically locally hyperbolic
	manifolds.
	\newblock {\em Ann. Global Anal. Geom.}, 16(1):1--27, 1998.
	
	
	
	\bibitem[Bar]{Bar} R. Bartnik, 
	The mass of an asymptotically flat manifold. 
	{\em Commun. Pure and Appl. Math.}
	{\bf 39} (1986), 661-693.
	
\bibitem[Bau]{Bau} H. Baum, Complete Riemannian manifolds with imaginary Killing spinors, 
	{\em Ann. Glob. Anal. Geom.} {\bf 7} (1989), 205--226.


\bibitem[BCN]{BCN} H. Barzegar, P. T. Chru\'sciel, L. Nguyen. On the total mass of asymptotically hyperbolic manifolds, {\em arXiv:1812.03924}.


	\bibitem[BM]{BM} S. Brendle, F. C. Marques, Recent progress on the Yamabe problem. {\em Surveys in geometric analysis and relativity}, 29–47, Adv. 	Lect. Math. (ALM), 20, Int. Press, Somerville, MA, 2011.	
	
	\bibitem[CH]{CH} 
	P. T. Chru\'sciel  and M. Herzlich,
	The mass of asymptotically hyperbolic Riemannian manifolds.
	{\em Pacific J. Math.} {\bf{212}} (2003), 231-264.
	
	
	\bibitem[CLW]{CLW} X. Chen, M. Lai, F. Wang,  
	Escobar-Yamabe compactifications for Poincar\'e-Einstein manifolds and rigidity theorems, {\em arXiv:1712.02540}.
	
	
	\bibitem[CN]{CN}
	P. T. Chru{\'s}ciel and G.~Nagy.
	\newblock The mass of spacelike hypersurfaces in asymptotically anti-de
	{S}itter space-times.
	\newblock {\em Adv. Theor. Math. Phys.}, 5(4):697--754, 2001.
	
	\bibitem[DGS]{DGS} M. Dahl, R. Gicquaud, and A. Sakovich. Penrose type inequalities for asymptotically
	hyperbolic graphs. {\em Ann. Henri Poincar\'e}, 14(5):1135-1168, 2013.
	
	\bibitem[dLG]{dLG} L.L. de Lima, F. Gir\~ao, An Alexandrov–Fenchel-type inequality in hyperbolic space with an application to a Penrose
	inequality, Ann. Henri Poincar\'e 17 (2016) 979–1002. 
	
	\bibitem[dLGM]{dLGM} L. L. de Lima, F. Gir\~ao, A. Montalb\'an, The mass in terms of Einstein and Newton, {\em arXiv:1811.06924}.
	
	\bibitem[dLPZ]{dLPZ} L.  L. de Lima, P. Piccione, M. Zedda,  On bifurcation of solutions of the Yamabe problem in product manifolds. {\em Ann. Inst. H. Poincar\'e Anal. Non Lin\'aire} 29 (2012), no. 2, 261-277.

	\bibitem[GN]{GN} N. Grosse, R. Nakad,
	Boundary value problems for noncompact boundaries of Spin$^c$ manifolds and spectral estimates,
	{\em Proc. Lond. Math. Soc.} (3) 109 (2014), no. 4, 946-974.
	
	\bibitem[HM]{HM} O. Hijazi, S. Montiel, A holographic principle for the existence of parallel spinor fields and an inequality of Shi-Tam type,
	{\em Asian J. Math.} 18 (2014) 489-506.
	
	\bibitem[HMR]{HMR} O. Hijazi, S. Montiel, S. Raulot, A holographic principle for the existence of imaginary Killing spinors, {\em J. Geom. Phys.} 91
	(2015) 12-28.
	
	
	
	\bibitem[He1]{He1}
	M.~Herzlich.
	\newblock Mass formulae for asymptotically hyperbolic manifolds.
	\newblock In {\em Ad{S}/{CFT} correspondence: {E}instein metrics and their
		conformal boundaries}, volume~8 of {\em IRMA Lect. Math. Theor. Phys.}, pages
	103--121. Eur. Math. Soc., Z\"urich, 2005.
	
		\bibitem[He2]{He2}
	M. Herzlich,
	Computing asymptotic invariants with the Ricci tensor on asymptotically flat and asymptotically hyperbolic manifolds.  
	{\em Ann. Henri Poincar\'e} 17 (2016), no. 12, 3605-3617.
	
	\bibitem[LP]{LP} J. M.  Lee, T. H. Parker, The Yamabe problem. {\em Bull. Amer. Math. Soc. (N.S.)} 17 (1987), no. 1, 37-91.
	
	\bibitem[LQS]{LQS}  G. Li, J. Qing, Y. Shi, Gap phenomena and curvature estimates for conformally compact
	Einstein manifolds, {\em Trans. Amer. Math. Soc.} 369 (2017), no. 6, 4385-4413.
	
	\bibitem[Lo]{Lo} J. Lohkamp,
	The higher dimensional positive mass theorem II, {\em arXiv:1612.07505}.
	
	
	\bibitem[Ma]{Ma} D. Maerten, Positive energy-momentum theorem for AdS-asymptotically hyperbolic manifolds. {\em Ann. Henri Poincar\'e} 7 (2006), no. 5, 975-1011.
	
	\bibitem[MP]{MP} R. Mazzeo, F. Pacard, Constant curvature foliations in asymptotically hyperbolic
	spaces. {\em Rev. Mat. Iberoam.} 27 (2011), no. 1, 303-333.
	
	\bibitem[Mi]{Mi}
	B.~Michel.
	\newblock Geometric invariance of mass-like asymptotic invariants.
	\newblock {\em J. Math. Phys.}, 52(5):052504, 14, 2011.
	
	\bibitem[M-O]{M-O}
	M.~Min-Oo.
	\newblock Scalar curvature rigidity of asymptotically hyperbolic spin
	manifolds.
	\newblock {\em Math. Ann.}, 285(4):527--539, 1989.
	
	\bibitem[Ra]{Ra} S. Raulot,
	A remark on the rigidity of conformally compact Poincar\'e-Einstein manifolds, {\em arXiv  arXiv:1803.03162}.
	
	
	\bibitem[Sc]{Sc} R. Schoen, Conformal deformation of a Riemannian metric to constant scalar curvature. {\em J. Differential Geom.} 20 (1984), no. 2, 479-495. 
	
	\bibitem[SY1]{SY1}
	R. Schoen, S.-T. Yau,
	On the proof of the positive mass conjecture in General Relativity. 
	{\em Comm. Math. Phys.} {\bf{65}} (1979), 45-76.
	
	\bibitem[SY2]{SY2} R. Schoen, S.-T. Yau,
	Positive Scalar curvature and minimal hypersurface singularities, {\em arXiv:1704.05490}.
	
	\bibitem[Wa]{Wa}
	X. Wang, 
	Mass for asymptotically hyperbolic manifolds. 
	{\em J. Diff. Geom}. {\bf 57} (2001), 273-299.
	
	\bibitem[Wi]{Wi}
	E. Witten,
	A new proof of the positive energy theorem. 
	  {\em Comm. Math. Phys.} {\bf{80}} (1981), 381-402. 
	
	
\end{thebibliography}
\end{document}